\documentclass[letterpaper,10pt,reqno]{amsart}
      \usepackage{setspace}     
     \usepackage[pdftex]{graphicx}
     \usepackage{sidecap}
\usepackage{fancyhdr, amsmath, amssymb, amsfonts, url, dsfont, mathrsfs, graphicx, epsfig,  amsthm,mathrsfs}
\usepackage{tikz}
\usepackage{wrapfig}
\usepackage{framed}
\usepackage{listings}

\usepackage{pgf}
\usepackage{tikz}
\usetikzlibrary{arrows,automata}

\usepackage{dsfont}
\usepackage{float}
\usepackage{verbatim} 
\usepackage[latin1]{inputenc}
\usepackage{cite}
\usepackage{graphicx}
\usepackage{vmargin} 
\usepackage{ifpdf} 
\usepackage{url} 
\usepackage{fancyhdr}
\usepackage{lmodern}
\usepackage{moreverb}
\usepackage{enumitem}
\usepackage{amsmath}
%

\usepackage{pgf}
\usepackage{tikz}
\usepackage{graphicx}
\usetikzlibrary{arrows}
\usetikzlibrary{shapes,snakes,calendar,matrix,backgrounds,folding}

\usepackage{color}
\definecolor{rltred}{rgb}{0.75,0,0}
\definecolor{rltgreen}{rgb}{0,0.5,0}
\definecolor{rltblue}{rgb}{0,0,0.75}

\usepackage{thumbpdf}
\usepackage[pdftex,colorlinks=true,urlcolor=rltred, filecolor=rltgreen, linkcolor=rltblue]{hyperref}

 \newtheorem{thm}{Theorem}[section]
\newtheorem{cor}[thm]{Corollary}
\newtheorem{lemma}[thm]{Lemma}
\newtheorem{prop}[thm]{Proposition}
\newtheorem{definition}[thm]{Definition}

\newtheorem{remark}[thm]{Remark}

\newcommand{\N}{\mathbb{N}}

\begin{document}

\title{The Wasserstein distance between stationary measures}

\begin{thanks}
{The author was partially supported by CONICYT PIA ACT172001 }
\end{thanks}

\author{Italo Cipriano} 

\address{Facultad de Matem\'aticas,
Pontificia Universidad Cat\'olica de Chile (PUC), Avenida Vicu\~na Mackenna 4860, Santiago, Chile}
\email{icipriano@gmail.com }
\urladdr{http://www.icipriano.org/}

\maketitle

\begin{abstract}

We provide explicit formulaes for the first Kantorovich-Wasserstein distance between stationary measures for iterated function schemes on the unit interval. In particular, we consider two stationary measures with different configurations of the weights associated to the same iterated function scheme with disjoint images composed of: $k$ positive contractions or $2$ contractions of different sign. We also study the case of two stationary measures associated to different iterated function schemes.

\end{abstract}

\section{Introduction}

The optimal transport problem is an engineering problem proposed by Monge in 1781. In a few words it refers to the minimization of the costs of transporting an entire collection of objects into other, where the initial and the final spacial distribution (or mass distribution, or in general any other distribution of a physical property) of the objects can be modelled by probability measures. The mathematical model considers a probability measure $\mu$ that models the objects that are being taken, a probability measure $\nu$ that models the objects that are being deposited, a transport function $T(x)=y,$ and a cost function $c(x,y).$ The transport problem corresponds to find $T$ such that $\nu=\mu\circ T^{-1}$ and such that minimizes the total cost of the transport
$$
\int c(x,T(x))d\mu(x).
$$

Applications of optimal transport include image processing, for instance comparing color distributions \cite{700, 713}, traffic control \cite{1000,1001,1002}, economics and evolution PDEs, among others. We address the reader interested in the last mentioned applications to the book \cite{1003}, and in general, for more information of optimal transport problem and further references to the books by Villani \cite{Villani1, Villani2}. We are interested in a particular version of the transport problem that satisfies the axioms of a distance function, called Wasserstein distance.

Given a Polish metric space $(\mathcal{X},d),$ and $p\in [1,\infty).$ For any two probability measures $\mu,\nu$ on $\mathcal{X},$ the Wasserstein distance of order $m$ between $\mu$ and $\nu$ is defined by
$$
W_{m}(\mu,\nu)=\inf \left\{ \left[\mathbb{E}d(X,Y)^m\right]^{\frac{1}{m}}: \mbox{law}(X)=\mu, \mbox{law}(Y)= \nu  \right\}.
$$

Wasserstein distances are important in statistics, limit theorems and approximation of probability measures \cite{150, 254, 256, 257, 282, 694, 696, 716,704}. They have been used in the study of: Statistical mechanics, specifically, in the theory of propagation of chaos \cite{81, 82, 221, 590, 624}, Boltzmann equations \cite{776, 777}, Mixing and convergence for Markov chains  \cite{231, 662, 679}, Rates of fluctuations of empirical measures \cite{695,696, 498, 8, 307, 314, 315, 479, 771, 845}, Large-time behavior of stochastic partial differential equations \cite{455, 458, 533, 605}, Hydrodynamic limits of systems of particles \cite{444}, Ricci curvature \cite{662}, Linearly rigid spaces \cite{625},Towers of measures, Bernoulli automorphisms and classification of metric spaces \cite{809}. 

Our goal is exploring the link between Wasserstein distances and fractal geometry. In order to settle our setting, we start by the definitions of iterated function system (IFS) and stationary probability measure (SPM).

An IFS is a finite set of contractions $(f_1,f_2,\ldots f_N)$ in $\mathbb{R}^d.$ Hutchinson proved in \cite{Hutchinson} that associated to an IFS there is a unique non empty and compact set that is invariant under the IFS, that is, there is a unique $\mathcal{S}\subset \mathbb{R}^d$ non empty and compact such that
$$
\mathcal{S}=\cup_{i=1}^N f_i(S).
$$
This invariant set is called the attractor of the IFS. Examples are the Cantor ternary set on the real line, and the Sierpinski gasket in $\mathbb{R}^2.$ Associated to each IFS $f=(f_1,f_2,\ldots f_N)$ in $\mathbb{R}^d$ and a probability vector $p=(p_1,p_2,\ldots,p_N)\in [0,1]^N$ there is a unique probability measure $\mu=\mu^{(f,p)}$ such that
$$
\mu(A)=\sum_{i=1}^N p_i \mu (f_i^{-1}(A)) \mbox{ for all } A\in \mathcal{B},
$$
where $\mathcal{B}$ are the subset of Borel of $\mathbb{R}^d.$ This probability measure is called a SPM and its existence and unicity is proved in \cite{Hutchinson}.

Fraser initiated in \cite{Fraser} the study of the Wasserstein distances between stationary measures. An explicit formula for $W_{1}\left(\mu^{(f,p)},\mu^{(f,q)}\right)$ and an upper bound for $W_{2}\left(\mu^{(f,p)},\mu^{(f,q)}\right)$ were obtained in \cite{Fraser} when $f=(f_1,f_2), p,q \in (0,1)^2$ and $f_1(x)=\rho x+t_1,$ $f_2(x)=\rho x+t_2$ where $\rho\in (0,1/2],$ $t_1\in [0,1-2\rho]$ and $t_2 \in [t_1+\rho,1-\rho].$ An explicit formula of $W_{1}\left(\mu^{(f,p)},\mu^{(f,q)}\right)$ in the case of  $f_1(x)=\rho_1x+t_1,$ $f_2(x)=\rho_2x+t_2,$ with $\rho_1,\rho_2>0$ and $f_1(0,1)\cap f_2(0,1)=\emptyset$ was obtained in \cite{Mark_Pollicott_Italo_Cipriano}, together with a good approximation of $W_{1}\left(\mu^{(f,p)},\mu^{(f,q)}\right)$ when $f_1, f_2$ are positive Lipschitz contractions of the open interval such that $f_1(0,1)\cap f_2(0,1)=\emptyset.$\\

The main results of these notes can be summarised as follows.
\begin{enumerate}
\item We obtain a good approximation of $W_{1}\left(\mu^{(f,p)},\mu^{(f,q)}\right)$ where $f=(f_1,\ldots,f_k)$ are $k$ positive Lipschitz contractions of the unit interval such that $f_i(0,1)\cap f_j(0,1)=\emptyset$ for every $i\neq j.$ We obtain an explicit formula in the particular case $f_ix=\rho_ix+t_i$ under certain conditions on the weights. This solves a problem proposed in \cite{Fraser}.
\item We obtain a good approximation of $W_{1}\left(\mu^{(f,p)},\mu^{(g,q)}\right)$ where $f=(f_1,f_2)$ and $g=(g_1,g_2)$ are each  positive Lipschitz contractions of the unit interval such that $f_1[0,1]\cap f_2[0,1]= g_1[0,1]\cap g_2[0,1]= \emptyset.$ We obtain an explicit formula in the particular case $f_ix=\alpha_ix+a_i$ and $g_ix=\beta_ix+b_i$ for $i=1,2,$ under certain conditions on the weights.
\item We obtain a good approximation of $W_{1}\left(\mu^{(f,p)},\mu^{(f,q)}\right)$ where $f=(f_1,f_2),$ $f_1$ is a positive Lipschitz contraction and $f_2$ is a negative Lipschitz contraction of the unit interval such that $f_1[0,1]\cap f_2[0,1]=\emptyset,$ under certain condition of symmetry on the weights.
\end{enumerate}

The main difficulty in the proofs is an accurate description of the intersections of certain non-classics Cantor staircases. The author believes the ideas of the proof are general for IFS with more contractions and certain symmetries on the weights functions. The author also believes the problem of estimating $W_m\left(\mu^{(f,p)},\mu^{(g,q)}\right)$ in full generality is worth investigating, although it remains a relatively new subject of study.\\

The paper is written in three main chapters: the first include the results, the second the proofs, and the third a few computational examples for each theorem.

\section{Results}

First, we define positive and negative Lipschitz contractions, and recall some useful results for the case of $\mathbb{R}^d$ with $d=1.$ Second, we state the main results.

\subsection{Definitions and useful results}

\begin{definition}[Lipschitz contractions]
We say that $f_i: \mathbb R \to \mathbb R$ with $i=1,\ldots, k$ is a family of Lipschitz contractions if
$$Lip(f_i):=\sup_{x,y\in \mathbb{R}}\frac{|f_i(x)-f_i(y) |}{|x-y|}<1.$$
We can assume without loss of generality
that $f_i: [0,1] \to [0,1]$ and 
\begin{equation}\label{eq_cero}
\max\{f_i(0),f_i(1)\} \leq \max\{f_{i+1}(0),f_{i+1}(1)\}
\end{equation}
with a suitable choice of coordinates. We say that $f=(f_1,\ldots,f_k)$ is an IFS of Lipschitz contractions. We say that a Lipschitz contraction $f_i: \mathbb R \to \mathbb R$ is positive if $f_i$ is differentiable in $\mathbb{R}$ and $\frac{f_i}{dx}>0,$ and we say it is negative if $\frac{f_i}{dx}<0.$ We say that $f=(f_1,\ldots,f_k)$ is an IFS of positive Lipschitz contractions if each $f_i$ is a positive contraction.
\end{definition}

We observe an alternative re-writing of the equation for the SPM.

\begin{remark}
Given an IFS $f=(f_1,\ldots,f_k)$ of Lipschitz contractions and $p=(p_1,\ldots,p_k)\in (0,1)^k,$ then $\mu=\mu^{(f,p)}$ is the SPM iff
\begin{equation}\label{eq_uno}
\int_0^1 \phi(x)d \mu(x)=  \sum_{i=1}^k p_i \int_0^1 \phi \circ f_i (x) d\mu(x)
\end{equation}
for every continuous function $\phi:[0,1]\to\mathbb R.$
\end{remark}

We recall the Kantorovich-Rubinstein duality theorem that gives the following reformulation of $W_1(\mu,\nu).$ 
\begin{thm}
If $\mu$ and $\nu$ are probability measures on $\mathbb{R}$ with compact support
$$
W_1(\mu,\nu)=\sup\left\{\int_{-\infty}^{\infty} \phi(x) d \mu(x)- \int_{-\infty}^{\infty} \phi(x) d\nu(x): Lip(\phi)\leq 1 \right\}.
$$
\end{thm}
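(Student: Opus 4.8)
The plan is to deduce this from the Kantorovich (coupling) formulation of the transport problem together with the special structure of the cost $c(x,y)=|x-y|$, which is itself a metric. For $m=1$ the definition of $W_1$ given above reads $W_1(\mu,\nu)=\inf\{\mathbb E|X-Y|:\mathrm{law}(X)=\mu,\ \mathrm{law}(Y)=\nu\}=\inf_{\pi\in\Pi(\mu,\nu)}\int|x-y|\,d\pi(x,y)$, where $\Pi(\mu,\nu)$ denotes the couplings of $\mu$ and $\nu$. I would first record the easy half of the duality: for any $\pi\in\Pi(\mu,\nu)$ and any $\phi$ with $Lip(\phi)\leq 1$,
$$
\int\phi\,d\mu-\int\phi\,d\nu=\int\bigl(\phi(x)-\phi(y)\bigr)\,d\pi(x,y)\leq\int|x-y|\,d\pi(x,y).
$$
Taking the supremum over $\phi$ on the left and the infimum over $\pi$ on the right yields $\sup\leq W_1(\mu,\nu)$ for arbitrary probability measures, which is the trivial inequality.

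For the reverse inequality I would use that $\mu,\nu$ are supported on a common compact interval. The hard analytic input, which I would import rather than reprove, is the general Kantorovich duality for lower semicontinuous costs on a compact space (see Villani \cite{Villani1, Villani2}), which asserts that there is no duality gap:
$$
W_1(\mu,\nu)=\sup\left\{\int\phi\,d\mu+\int\psi\,d\nu:\ \phi(x)+\psi(y)\leq|x-y|\right\},
$$
the supremum being over bounded continuous pairs $(\phi,\psi)$. The remaining task is then purely structural: to show this unconstrained dual reduces to the Lipschitz form, i.e.\ that one may take $\psi=-\phi$ with $Lip(\phi)\leq 1$ without decreasing the value. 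To this end I would pass to $c$-transforms, replacing an admissible $\psi$ by $\phi^c(y):=\inf_x\bigl(|x-y|-\phi(x)\bigr)$, which can only increase $\int\psi\,d\nu$ while preserving admissibility, and then replacing $\phi$ by $\phi^{cc}$.

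The key step, and the one I expect to be the main obstacle, is the observation special to the metric cost $c(x,y)=|x-y|$: any function of the form $\phi^c$ is automatically $1$-Lipschitz, and for a $1$-Lipschitz function the triangle inequality forces $\phi^c=-\phi$. Consequently the iterated transform collapses each admissible pair to a pair $(\phi,-\phi)$ with $Lip(\phi)\leq 1$ and dual value at least as large, which upgrades the trivial inequality to the desired equality. The delicate point is verifying the Lipschitz property and the identity $\phi^c=-\phi$ uniformly, and checking that the collapse is genuinely value-nondecreasing.

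As an alternative, entirely self-contained in this one-dimensional setting, I would argue directly through distribution functions. Writing $F_\mu,F_\nu$ for the cumulative distribution functions, the monotone (quantile) coupling gives $W_1(\mu,\nu)=\int_{-\infty}^{\infty}|F_\mu(x)-F_\nu(x)|\,dx$, and an integration by parts produces, for $Lip(\phi)\leq1$,
$$
\int\phi\,d\mu-\int\phi\,d\nu=\int_{-\infty}^{\infty}\bigl(F_\nu(x)-F_\mu(x)\bigr)\,\phi'(x)\,dx,
$$
the boundary contributions vanishing because $\mu,\nu$ have compact support and $F_\mu-F_\nu\to0$ at $\pm\infty$. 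Choosing $\phi$ with $\phi'=\mathrm{sgn}(F_\nu-F_\mu)$, a legitimate $1$-Lipschitz competitor, attains $\int|F_\mu-F_\nu|\,dx$ and closes the gap; here the main labour is instead establishing the optimality of the quantile coupling.
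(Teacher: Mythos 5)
Your argument is correct, but note that the paper does not actually prove this statement: it is recalled as the classical Kantorovich--Rubinstein duality, stated without proof (the only duality-type result the paper discusses proving is the adjacent Theorem~\ref{Bochi} of Dall'Aglio--Vallender, for which it defers to an elementary argument of Bochi in \cite{Mark_Pollicott_Italo_Cipriano}). Your first route is the standard textbook proof from \cite{Villani1}: the easy inequality $\sup\leq\inf$ via couplings is right, and the reduction of the unconstrained dual to $1$-Lipschitz pairs $(\phi,-\phi)$ via $c$-transforms is sound --- $\phi^c$ is $1$-Lipschitz as an infimum of $1$-Lipschitz functions of $y$, and for $1$-Lipschitz $\phi$ one has $\phi^c(y)\geq\inf_x(-\phi(y))=-\phi(y)$ together with $\phi^c(y)\leq-\phi(y)$ from $x=y$, so indeed $\phi^c=-\phi$; the only imported ingredient is the absence of a duality gap. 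Your second route is closer in spirit to how the paper actually operates, since it runs through the identity $W_1(\mu,\nu)=\int|F_\mu-F_\nu|$ of Theorem~\ref{Bochi}; the integration by parts and the choice $\phi'=\mathrm{sgn}(F_\nu-F_\mu)$ correctly show that the Lipschitz supremum is at least $\int|F_\mu-F_\nu|$, and combined with the easy inequality this closes the loop \emph{provided} the optimality of the quantile coupling (equivalently, Theorem~\ref{Bochi} itself) is established independently of the duality you are trying to prove --- otherwise the argument is circular. You flag this correctly; the one-dimensional monotone-coupling optimality can indeed be proved elementarily, which is precisely the content of the Bochi proof the paper cites, so either of your routes is a legitimate substitute for the citation the paper relies on.
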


We have an useful characterisation of this distance.

\begin{thm}[Dall'Aglio-Vallender]\label{Bochi}
Let $\mu$ and $\nu$ be probability measures on $\mathbb R.$ Then 
\begin{equation}\label{eq_dos}
W_1(\mu,\nu)=\int_{-\infty}^{\infty} |F(t)-G(t)|dt,
\end{equation}
where $F$ and $G$ are the cumulative distribution functions of $\mu$ and $\nu.$
\end{thm}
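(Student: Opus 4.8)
The plan is to read off the formula directly from the Kantorovich--Rubinstein duality stated above, converting the supremum over $1$-Lipschitz test functions into an $L^1$ problem by integration by parts. Since in our setting $\mu$ and $\nu$ are compactly supported, say on $[a,b]$, their cumulative distribution functions satisfy $F(t)=G(t)=0$ for $t<a$ and $F(t)=G(t)=1$ for $t\geq b$, so that $F-G$ is supported on $[a,b]$ and the boundary terms appearing below will cancel.

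First I would fix a $1$-Lipschitz $\phi$ and rewrite the duality functional. Because $\phi$ is Lipschitz it is absolutely continuous with $|\phi'|\le 1$ almost everywhere (Rademacher), so Lebesgue--Stieltjes integration by parts gives
$$\int_a^b \phi\,d\mu-\int_a^b \phi\,d\nu=-\int_a^b \phi'(t)\,\bigl(F(t)-G(t)\bigr)\,dt,$$
the two boundary contributions $\phi(b)\cdot 1$ cancelling upon subtraction and the contributions at $a$ vanishing since $F(a)=G(a)=0$. Bounding the right-hand side using $|\phi'|\le 1$ yields $\int\phi\,d\mu-\int\phi\,d\nu\le \int_a^b |F(t)-G(t)|\,dt$, and taking the supremum over all such $\phi$ gives one inequality, $W_1(\mu,\nu)\le \int_{-\infty}^\infty |F-G|\,dt$.

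For the reverse inequality I would exhibit a maximiser. Setting $\phi(t)=\int_a^t \mathrm{sgn}\bigl(G(s)-F(s)\bigr)\,ds$ produces a $1$-Lipschitz function whose derivative equals $\mathrm{sgn}(G-F)$ almost everywhere; substituting this into the identity above turns the integrand into $|F-G|$, so this particular choice realises the value $\int_{-\infty}^\infty |F-G|\,dt$. Hence the supremum is at least this quantity, and together with the upper bound the two sides coincide, which is \eqref{eq_dos}.

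The main obstacle is the rigorous justification of the integration-by-parts step together with the fact that the extremal $\phi$ is merely Lipschitz, not $C^1$: its derivative is a sign function with jumps wherever $F-G$ changes sign, and $F,G$ themselves may jump at atoms. I would handle this through the Lebesgue--Stieltjes formulation, being careful at common discontinuity points of $F$ and $G$, or alternatively by a smoothing argument in which $\mathrm{sgn}(G-F)$ is replaced by a smooth function valued in $[-1,1]$ and one passes to the limit by dominated convergence. As a self-contained cross-check of the constant I would also note the probabilistic route that avoids the duality altogether: the pointwise identity $|x-y|=\int_{-\infty}^\infty |\mathbf{1}_{x\le t}-\mathbf{1}_{y\le t}|\,dt$ together with Fubini gives $\mathbb{E}|X-Y|\ge \int |F-G|\,dt$ for every coupling $(X,Y)$, while the comonotone coupling built from the quantile functions attains equality.
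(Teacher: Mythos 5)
Your argument is essentially correct, but note that the paper does not actually prove this theorem: it is quoted as a known result, with the proof deferred to Dall'Aglio and Vallender and to an elementary argument of Bochi reproduced in the cited Cipriano--Pollicott paper, so there is no internal proof to compare against. Your route --- Kantorovich--Rubinstein duality plus Lebesgue--Stieltjes integration by parts, with the explicit maximiser $\phi(t)=\int_a^t \mathrm{sgn}\bigl(G(s)-F(s)\bigr)\,ds$ --- is the standard duality-based derivation and is sound; the integration by parts is unproblematic because $\phi$ is continuous and $\phi'\,dt$ is absolutely continuous, so the possible atoms of $\mu$ and $\nu$ cause no boundary trouble, exactly as you anticipate. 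Two remarks. First, the theorem as stated is for arbitrary probability measures on $\R$, where both sides may be infinite and the duality theorem as quoted in the paper (for compactly supported measures) does not directly apply; your proof therefore covers only the compactly supported case, but this matches what the paper explicitly says it requires (measures on $[0,1]$), so it is not a real gap here. Second, your closing ``cross-check'' via the identity $|x-y|=\int_{-\infty}^{\infty}|\mathbf{1}_{x\le t}-\mathbf{1}_{y\le t}|\,dt$, Fubini, and the quantile (comonotone) coupling is in fact the cleaner and fully self-contained argument --- it is essentially Vallender's original proof, works for arbitrary measures on $\R$ (with both sides possibly infinite), and does not lean on the duality theorem, which the paper itself states without proof; I would promote that sketch from a cross-check to the main argument.
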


A proof can be found in \cite{Aglio} and \cite{Vallander}. In this paper, we only require a version for $\mu$ and $\nu$ probability measures on $[0,1].$ An elementary proof for this case was proposed by Jairo Bochi and it is included in \cite{Mark_Pollicott_Italo_Cipriano}.

\subsection{Main results}
  
\begin{thm}\label{kmaps}
Let $f=(f_1,\ldots f_k)$ be an IFS of positive Lipschitz contractions on the unit interval such that
\begin{equation}\label{eq_tres}
f_i(0,1)\cap f_j(0,1)=\emptyset \mbox{ for all }i,j=1,\ldots,k, i\neq j.
\end{equation}
If $(p,q)$ is a pair of probability vectors in $(0,1)^k$ such that 
\begin{equation}\label{eq_cuatro}
\sum_{i=1}^m p_i -  q_i \in \mathcal{A} \mbox{ for every }m=1,2,\ldots,k,
\end{equation}
where $\mathcal{A}=\mathbb R^{\geq 0}$ or $\mathcal{A}=\mathbb R^{\leq 0},$ then
\begin{equation}\label{eq_cinco}
W_1\left(\mu^{(f,p)},\mu^{(f,q)}\right)=\left| \int x d \left(\mu^{(f,p)}-\mu^{(f,q)}\right)(x)\right|.
\end{equation}
\end{thm}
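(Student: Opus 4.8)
The plan is to reduce the identity \eqref{eq_cinco} to a statement about cumulative distribution functions and then to extract that statement from the sign condition \eqref{eq_cuatro}. Write $F$ and $G$ for the cumulative distribution functions of $\mu^{(f,p)}$ and $\mu^{(f,q)}$, both supported in $[0,1]$. Since for a probability measure on $[0,1]$ one has $\int x\,d\mu^{(f,p)}(x)=\int_0^1\big(1-F(t)\big)\,dt$, and likewise for $G$, the right-hand side of \eqref{eq_cinco} equals $\left|\int_0^1\big(F(t)-G(t)\big)\,dt\right|$. On the other hand Theorem \ref{Bochi} gives $W_1\!\left(\mu^{(f,p)},\mu^{(f,q)}\right)=\int_0^1|F(t)-G(t)|\,dt$. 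The elementary inequality $\left|\int_0^1 (F-G)\right|\le\int_0^1|F-G|$ always holds, with equality if and only if $F-G$ does not change sign on $[0,1]$. Thus the whole theorem reduces to showing that \eqref{eq_cuatro} forces $F\ge G$ everywhere (when $\mathcal A=\mathbb R^{\ge 0}$) or $F\le G$ everywhere (when $\mathcal A=\mathbb R^{\le 0}$); equivalently, that one of the two measures stochastically dominates the other.

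To produce this domination I would pass to the symbolic coding. Let $\Sigma=\{1,\dots,k\}^{\mathbb N}$ and let $\pi:\Sigma\to[0,1]$ be the coding map $\pi(\omega)=\lim_{n\to\infty}f_{\omega_1}\circ\cdots\circ f_{\omega_n}(0)$; then $\mu^{(f,p)}$ and $\mu^{(f,q)}$ are the push-forwards under $\pi$ of the Bernoulli product measures with weights $p$ and $q$. The first step is to record that $\pi$ is monotone for the lexicographic order on $\Sigma$: because each $f_i$ is increasing and, by \eqref{eq_cero} together with the disjointness \eqref{eq_tres}, the intervals $f_1[0,1],\dots,f_k[0,1]$ are arranged from left to right with $f_i(1)\le f_{i+1}(0)$, the relation $\omega\le_{\mathrm{lex}}\omega'$ implies $\pi(\omega)\le\pi(\omega')$.

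The second step interprets \eqref{eq_cuatro}. Reading $p$ and $q$ as distributions on the alphabet $\{1,\dots,k\}$, the condition $\sum_{i=1}^m(p_i-q_i)\ge 0$ for all $m$ says exactly that the single-symbol distribution $p$ is stochastically dominated by $q$, so there is a monotone coupling $(X,Y)$ of $p$ and $q$ with $X\le Y$ almost surely. Applying this coupling independently in each coordinate yields a coupling $(\mathbf X,\mathbf Y)$ of the two Bernoulli measures with $X_n\le Y_n$ almost surely for every $n$ simultaneously. The key elementary observation is that simultaneous coordinatewise domination forces the lexicographic inequality: if $\mathbf X\ne\mathbf Y$, then at the first coordinate where they differ one has $X_n<Y_n$, whence $\mathbf X\le_{\mathrm{lex}}\mathbf Y$. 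Combining this with the monotonicity of $\pi$ gives $\pi(\mathbf X)\le\pi(\mathbf Y)$ almost surely, i.e. a coupling of $\mu^{(f,p)}$ and $\mu^{(f,q)}$ concentrated on $\{x\le y\}$; hence $F\ge G$ pointwise. The case $\mathcal A=\mathbb R^{\le 0}$ is identical with the roles of $p$ and $q$ reversed, giving $F\le G$.

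I expect the main obstacle to be precisely the passage from the per-symbol stochastic order to the order of the push-forward measures on $[0,1]$: the coordinatewise partial order on $\Sigma$ is far weaker than the lexicographic one, so it is essential both that the coupling delivers $X_n\le Y_n$ for all $n$ at once and that $\pi$ respects the lexicographic order — which is where the geometric hypotheses \eqref{eq_cero} and \eqref{eq_tres} enter (for contractions of mixed sign the map $\pi$ would fail to be lex-monotone, explaining why that case needs a separate argument). Once $F-G$ is known to keep a constant sign on $[0,1]$, the identity \eqref{eq_cinco} is immediate from the two formulas above for $W_1$ and for $\big|\int x\,d(\mu^{(f,p)}-\mu^{(f,q)})\big|$.
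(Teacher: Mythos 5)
Your proposal is correct and follows essentially the same route as the paper: both reduce the claim via the Dall'Aglio--Vallender formula to showing that $F-G$ keeps a constant sign, and both obtain this from a monotone coupling of the two Bernoulli measures together with the lexicographic monotonicity of the coding map. The paper merely constructs your per-symbol monotone coupling explicitly, as a common refinement $r\in(0,1)^{2k-1}$ of $p$ and $q$ with two block projections $\pi_p,\pi_q:\Sigma_{2k-1}\to\Sigma_k$, which is the same object in different notation.
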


\begin{remark}
If $k=2,$ then the condition (\ref{eq_cuatro}) is always satisfied.
\end{remark}

An immediate consequence of this theorem is an explicit formulae for the Wasserstein distance between stationary measures of affine maps.

\begin{cor}\label{kmapscor}
Let $f_i:[0,1]\to [0,1]$ be defined by $f_i x=\rho_i x+t_i,$ where $\rho_i\in (0,1)$ and $t_i\in [0,1)$ for $i=1,\ldots,k.$ Suppose that $\{f_1,\ldots,f_k\}$ satisfies (\ref{eq_cero}) and (\ref{eq_tres}). If $(p,q)$ is a pair of probability vectors in $(0,1)^k$ that satisfies (\ref{eq_cuatro}), then 
\begin{equation}\label{eq_seis}
W_1\left(\mu^{(f,p)},\mu^{(f,q)}\right)=\left| \frac{\sum_{i}p_i t_i}{1-\sum_{i} p_i\rho_i}-\frac{ \sum_{i} q_i t_i}{1-\sum_{i} q_i\rho_i} \right|.
\end{equation}
\end{cor}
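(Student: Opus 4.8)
The plan is to invoke Theorem~\ref{kmaps} and then evaluate the right-hand side of (\ref{eq_cinco}), which is the absolute difference of the first moments of the two stationary measures, by exploiting the affine structure of the maps together with the stationarity identity (\ref{eq_uno}).

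First I would verify the hypotheses. Each $f_i x = \rho_i x + t_i$ with $\rho_i \in (0,1)$ has $Lip(f_i) = \rho_i < 1$ and positive derivative, so $f = (f_1, \ldots, f_k)$ is an IFS of positive Lipschitz contractions; by assumption it satisfies (\ref{eq_cero}) and (\ref{eq_tres}), and $(p,q)$ satisfies (\ref{eq_cuatro}). Hence Theorem~\ref{kmaps} applies and gives
$$
W_1\left(\mu^{(f,p)},\mu^{(f,q)}\right)=\left| \int x \, d \mu^{(f,p)}(x) - \int x \, d\mu^{(f,q)}(x)\right|.
$$

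Second, I would compute the first moment of $\mu^{(f,p)}$. Writing $M_p := \int_0^1 x \, d\mu^{(f,p)}(x)$ and applying (\ref{eq_uno}) with the continuous test function $\phi(x)=x$, the linearity $f_i(x)=\rho_i x + t_i$ and the fact that $\mu^{(f,p)}$ is a probability measure yield
$$
M_p = \sum_{i=1}^k p_i \int_0^1 (\rho_i x + t_i)\, d\mu^{(f,p)}(x) = \Big(\sum_{i=1}^k p_i \rho_i\Big) M_p + \sum_{i=1}^k p_i t_i.
$$
Since $\rho_i < 1$ for all $i$ and $\sum_i p_i = 1$, we have $\sum_i p_i \rho_i < 1$, so the coefficient $1 - \sum_i p_i \rho_i$ is strictly positive and we may solve for this fixed point, obtaining $M_p = \frac{\sum_i p_i t_i}{1 - \sum_i p_i \rho_i}$. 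The identical computation for $q$ gives $M_q = \frac{\sum_i q_i t_i}{1 - \sum_i q_i \rho_i}$.

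Substituting $M_p$ and $M_q$ into the expression for $W_1$ above yields (\ref{eq_seis}). The argument involves no real obstacle: the only point requiring care is the non-vanishing of the denominators, which is guaranteed by the contraction condition $\rho_i \in (0,1)$; everything else is simply the observation that the first moment of a self-similar measure solves a single scalar linear fixed-point equation coming from (\ref{eq_uno}).
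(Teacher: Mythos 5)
Your proposal is correct and follows essentially the same route as the paper: invoke Theorem~\ref{kmaps} and then compute the first moment of each stationary measure by applying the stationarity identity (\ref{eq_uno}) with $\phi(x)=x$ and solving the resulting scalar linear equation, exactly as in the paper's auxiliary lemma. Your additional remark that $\sum_i p_i\rho_i<1$ guarantees the denominator is nonzero is a small point the paper leaves implicit.
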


The next theorem gives a good estimation of the first Wasserstein distance between SPMs associated to possibly different IFS with possibly different configurations of the weights.

\begin{thm}\label{teo_dos}
Let $f=(f_1,f_2)$ and $g=(g_1,g_2)$ be IFS of positive Lipschitz contractions on the unit interval. Suppose that $f=(f_1,f_2)$ satisfies (\ref{eq_cero}) and (\ref{eq_tres}), and that $f_1(0)=g_1(0),$ $f_2(0)=g_2(0),$ $g_1(x)\leq f_1(x),$ $g_2(x)\leq f_2(x)$ for all $x\in[0,1].$ If $(p,q)$ is a pair of probability vectors $p=(p_1,1-p_1)$ and $q=(q_1,1-q_1)$ such that $p_1\leq q_1,$ then 
$$W_1\left(\mu^{(f,p)},\mu^{(g,q)}\right)=\int_0^1 x d\left(\mu^{(g,q)}-\mu^{(f,p)}\right)(x).$$
 \end{thm}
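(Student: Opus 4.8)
The plan is to reduce the identity to a single pointwise comparison of cumulative distribution functions and then to read it off from Theorem~\ref{Bochi}. Let $F$ and $G$ denote the cumulative distribution functions of $\mu^{(f,p)}$ and $\mu^{(g,q)}$. Both measures are supported in $[0,1]$, so $F$ and $G$ coincide (equal $0$, resp.\ $1$) off $[0,1]$, and Dall'Aglio--Vallender gives
\begin{equation*}
W_1\left(\mu^{(f,p)},\mu^{(g,q)}\right)=\int_0^1 |F(t)-G(t)|\,dt.
\end{equation*}
On the other hand, for any probability measure $\mu$ on $[0,1]$ with cumulative distribution function $H$ one has $\int_0^1 x\,d\mu(x)=\int_0^1(1-H(t))\,dt$, so subtracting the mean representation of $\mu^{(f,p)}$ from that of $\mu^{(g,q)}$ yields the exact identity
\begin{equation*}
\int_0^1\bigl(F(t)-G(t)\bigr)\,dt=\int_0^1 x\,d\left(\mu^{(g,q)}-\mu^{(f,p)}\right)(x).
\end{equation*}
Comparing the two displays, the theorem is equivalent to the assertion that $F(t)\ge G(t)$ for every $t\in[0,1]$: granting this, $|F-G|=F-G$, and the two displays combine to give the claimed formula verbatim, with the measures appearing in the stated order $\mu^{(g,q)}-\mu^{(f,p)}$. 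Thus the whole problem is to establish the pointwise domination $F\ge G$ on $[0,1]$.

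To prove $F\ge G$ I would exploit the self-similar renewal equations satisfied by the two staircases. Since $f_1,f_2$ have disjoint images ordered by \eqref{eq_cero} and \eqref{eq_tres}, the function $F$ satisfies
\begin{equation*}
F(t)=p_1\,F\bigl(f_1^{-1}(t)\bigr)+(1-p_1)\,F\bigl(f_2^{-1}(t)\bigr),
\end{equation*}
with the convention that $F(f_i^{-1}(t))=0$ for $t<f_i(0)$ and $=1$ for $t>f_i(1)$, and $G$ satisfies the same equation with $g_i$ and $q_1$ in place of $f_i$ and $p_1$. I would realise $F$ and $G$ as the uniform limits of the monotone iterates of these two operators started from a common staircase, and then prove $F\ge G$ by propagating the inequality through the iteration. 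The three hypotheses are tailored to exactly this propagation: the shared left endpoints $f_i(0)=g_i(0)$ anchor both supports at the same place, the pointwise domination $g_i(x)\le f_i(x)$ controls the contributions coming from the image cylinders, and the weight ordering $p_1\le q_1$ enters through the constant heights of the plateau regions between the cylinders.

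The main obstacle is precisely the difficulty the introduction singles out: an accurate description of how the two non-classical Cantor staircases $F$ and $G$ interleave. Although the first-level cylinders share their left endpoints, the two IFS have different contraction profiles, so the right endpoints and all deeper cylinders are misaligned, and $F\ge G$ must be verified across a countable, non-aligned hierarchy of image and plateau regions. The genuinely delicate case is a point $t$ lying in an image cylinder of one staircase and simultaneously in a plateau of the other, where one side of the inequality is a rescaled copy of $F$ or $G$ and the other is the constant height fixed by the weights; reconciling these plateau heights with the rescaled cylinder contributions is the heart of the matter, and it is here that the interplay of $g_i\le f_i$ with $p_1\le q_1$ must be used decisively. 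I expect the cleanest route to be a simultaneous induction on the cylinder depth of both IFS that reduces the comparison on each region to one already settled at the previous level, so that the domination passes to the limiting functions $F$ and $G$ on all of $[0,1]$.
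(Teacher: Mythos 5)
Your reduction is sound as far as it goes: by Theorem~\ref{Bochi} and the identity $\int_0^1 x\,d\mu=\int_0^1(1-H(t))\,dt$, the printed formula is indeed equivalent to $F(t)\ge G(t)$ for (almost) every $t$, where $F,G$ are the cdfs of $\mu^{(f,p)},\mu^{(g,q)}$. The genuine gap is that the domination you then commit to proving goes in the \emph{wrong direction}, and no propagation argument can rescue it: under the stated hypotheses both effects push $\mu^{(g,q)}$ stochastically to the \emph{left} of $\mu^{(f,p)}$, so the provable inequality is $G\ge F$, not $F\ge G$. Concretely, coupling points of the two attractors with the same symbolic address gives a map $\psi:\Lambda^f\to\Lambda^g$ with $\psi(x)\le x$ (because $g_i\le f_i$ and all branches are increasing), and shifting weight to the left branch ($q_1\ge p_1$) only raises the cdf; this is exactly what the paper's proof (Lemma~\ref{lem_clave}) establishes, via an induction on the layers $L_n^f$, $L_n^g$ of the two attractors showing $y_i^n\le x_i^n$, followed by the chain $\mu^{(f,p)}[0,x]=\mu^{(g,p)}[0,\psi(x^*)]\le\mu^{(g,p)}[0,x]\le\mu^{(g,q)}[0,x]$, the last step being the $k=2$ case of Lemma~\ref{lemma_2_plus}. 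In your renewal scheme the same reversal appears at the very first iterate: with the usual extension conventions one checks $q_1H(g_1^{-1}t)+q_2H(g_2^{-1}t)\ge p_1H(f_1^{-1}t)+p_2H(f_2^{-1}t)$ for \emph{every} nondecreasing $H$ (since $g_i^{-1}\ge f_i^{-1}$, the branch-$1$ preimage value dominates the branch-$2$ one by \eqref{eq_cero} and \eqref{eq_tres}, and $q_1\ge p_1$), so the iteration propagates $G\ge F$. What you have actually detected, without realizing it, is a sign error in the paper: since $G\ge F$, the right-hand side as printed equals $\int_0^1(F-G)\,dt\le 0$, and the formula should read $\int_0^1 x\,d\bigl(\mu^{(f,p)}-\mu^{(g,q)}\bigr)(x)$; e.g.\ with the data of Example~6, the printed Corollary~\ref{cor_may3} gives $0.4-0.6=-0.2<0$ while $W_1=0.2$. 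A sanity check such as $g_i=f_i$ with $q_1>p_1$ would have flagged that your target $F\ge G$ is untenable.

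Separately, even after flipping the direction, your text is a plan rather than a proof: the step you yourself call ``the heart of the matter'' --- verifying the domination across the misaligned cylinders and plateaus --- is never carried out. It is worth noting that your operator idea, once corrected, yields a genuinely different and arguably cleaner argument than the paper's: monotonicity of each renewal operator in $H$, combined with the one-step comparison above, gives $T_{g,q}^nH_0\ge T_{f,p}^nH_0$ for a common initial staircase $H_0$, and passing to the limit at continuity points gives $G\ge F$ with no case analysis on the non-aligned hierarchy at all --- the interleaving difficulty you worry about dissolves, whereas the paper must handle it through the layer induction on $\Lambda^f,\Lambda^g$ and the transfer $\mu^{(f,p)}[0,x]=\mu^{(f,p)}[0,x^*]$, $x^*\in\Lambda^f$. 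But as submitted --- with the inequality aimed backwards and the key induction left as an expectation --- the proposal does not prove the theorem.
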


Again, we can write an explicit formulae for the first Wasserstein distance between SPMs of affine maps with positive slope.

\begin{cor}\label{cor_may3}
Let $f_i,g_i:[0,1]\to [0,1]$ be defined by $f_i x=\alpha_i x+t_i,$ $g_i x= \beta_i x+t_i$ where $\rho_i\in (0,1),$ $\beta_i\in(0,\rho_i]$ and $t_i\in [0,1)$ for $i=1,2.$ Suppose that $f=(f_1,f_2)$ satisfies (\ref{eq_cero}) and (\ref{eq_tres}). If $(p,q)$ is a pair of probability vectors $p=(p_1,1-p_1)$ and $q=(q_1,1-q_1)$ such that $p_1\leq q_1,$ then 
$$
W_1\left(\mu^{(f,p)},\mu^{(g,q)}\right)=\frac{\sum_{i}q_i t_i}{1-\sum_{i} q_i\beta_i}-\frac{\sum_{i}p_i t_i}{1-\sum_{i} p_i\rho_i}.
$$
\end{cor}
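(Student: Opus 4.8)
The plan is to deduce this corollary from Theorem~\ref{teo_dos} by verifying that theorem's hypotheses for the affine family and then evaluating the barycenters of the two stationary measures explicitly. First I would check the geometric conditions. Writing the slope of $f_i$ as $\rho_i$ and that of $g_i$ as $\beta_i$, we have $f_i(0)=t_i=g_i(0)$, so the anchoring conditions $f_1(0)=g_1(0)$ and $f_2(0)=g_2(0)$ hold. Moreover, for every $x\in[0,1]$ one has $g_i(x)-f_i(x)=(\beta_i-\rho_i)x\le 0$, since $\beta_i\le\rho_i$ and $x\ge 0$, so $g_i(x)\le f_i(x)$. Together with the assumed conditions (\ref{eq_cero}) and (\ref{eq_tres}) on $f$ and the ordering $p_1\le q_1$, Theorem~\ref{teo_dos} applies and yields
$$W_1\left(\mu^{(f,p)},\mu^{(g,q)}\right)=\int_0^1 x\,d\mu^{(g,q)}(x)-\int_0^1 x\,d\mu^{(f,p)}(x).$$

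It then remains only to evaluate the two first moments, and the key observation is that the barycenter of an affine stationary measure satisfies a single scalar linear equation coming from the invariance identity (\ref{eq_uno}). Taking $\phi(x)=x$ in (\ref{eq_uno}) for $\mu=\mu^{(f,p)}$ gives $\int x\,d\mu=\sum_{i}p_i\int(\rho_i x+t_i)\,d\mu=\left(\sum_i p_i\rho_i\right)\int x\,d\mu+\sum_i p_i t_i$, and solving this scalar equation produces $\int x\,d\mu^{(f,p)}=\frac{\sum_i p_i t_i}{1-\sum_i p_i\rho_i}$. Repeating the identical computation for $\mu^{(g,q)}$, whose maps have slopes $\beta_i$ and weights $q_i$, yields $\int x\,d\mu^{(g,q)}=\frac{\sum_i q_i t_i}{1-\sum_i q_i\beta_i}$. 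Substituting both expressions into the displayed identity gives the claimed formula.

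The only subtlety is to ensure these two linear equations are well posed, i.e. that the denominators $1-\sum_i p_i\rho_i$ and $1-\sum_i q_i\beta_i$ do not vanish. This is automatic, because each has the form $1-\sum_i w_i s_i$ with $w_i\ge 0$, $\sum_i w_i=1$ and $s_i\in(0,1)$, hence is $1$ minus a convex combination of numbers strictly less than $1$, and therefore lies in $(0,1)$. Consequently I do not expect a genuine obstacle: all of the analytic difficulty is already absorbed into Theorem~\ref{teo_dos}, and the corollary reduces to the elementary moment computation above, with the hypothesis $p_1\le q_1$ guaranteeing, through Theorem~\ref{teo_dos}, that the resulting difference is nonnegative as a distance must be.
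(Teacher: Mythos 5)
Your proposal is correct and follows essentially the same route as the paper: the paper likewise obtains the corollary by combining Theorem~\ref{teo_dos} (equivalently, the sign-constancy of $x\mapsto(\mu^{(g,q)}-\mu^{(f,p)})[0,x]$ established in Lemma~\ref{lem_clave} together with Theorem~\ref{Bochi}) with the first-moment identity $\int_0^1 x\,d\mu^{(f,p)}(x)=\frac{\sum_i p_i t_i}{1-\sum_i p_i\rho_i}$ derived from the stationarity equation (\ref{eq_uno}) with $\phi(x)=x$. Your explicit verification of the hypotheses $f_i(0)=g_i(0)$ and $g_i\le f_i$, and the remark on the non-vanishing denominators, are harmless additions that the paper leaves implicit.
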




A natural question is what can be said in the case of non necessarily positive Lipschitz contractions. The following theorem consider the case of the IFS  $f^r:=(f^r_1,f^r_2)$ for $r\in (2,\infty)$ defined by 
$$
\begin{aligned}
f^r_1 x =& \frac{x}{r}\\
f^r_2 x =& 1-\frac{x}{r}.
\end{aligned}
$$
 
\begin{thm}\label{teo_cuatro}
Let $k\in\mathbb{N}$ and $r\in (2k+1,\infty).$ Then for $p=(p_1,p_2)=(\frac{1}{2k+1},\frac{2k}{2k+1})$ and $q=(p_2,p_1)$ we have that 
$$W_1\left(\mu^{(f^r,p)},\mu^{(f^r,q)}\right)=\int_0^1 c_{r}(x) d\left(\mu^{(f^r,p)}-\mu^{(f^r,q)}\right) (x),$$
where
$$
c_{r}(x):=
\begin{cases}
-x & \mbox{ if }x<\frac{r^2}{r^2+1}\\
x & \mbox{ if }x>\frac{r^2}{r^2+1}.
\end{cases}
$$
\end{thm}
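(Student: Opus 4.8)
The plan is to compute $W_1$ through the Dall'Aglio--Vallender formula (Theorem~\ref{Bochi}) and then recast the resulting integral of $|F-G|$ as the integral of $c_r$ against $\mu^{(f^r,p)}-\mu^{(f^r,q)}$. Write $\mu=\mu^{(f^r,p)}$, $\nu=\mu^{(f^r,q)}$, let $F,G$ be their cumulative distribution functions, set $H=F-G$, and abbreviate $a=\tfrac{r^2}{r^2+1}$. First I would record the self-similar functional equations for $F$ and $G$ obtained by feeding indicator-type test functions into \eqref{eq_uno}: since $f^r_1(x)=x/r$ is increasing with image $[0,1/r]$ and $f^r_2(x)=1-x/r$ is orientation-reversing with image $[1-1/r,1]$, one gets $F(t)=p_1F(rt)$ on $[0,1/r]$, $F(t)=p_1$ on the gap $[1/r,1-1/r]$, and $F(t)=p_1+p_2\bigl(1-F(r(1-t))\bigr)$ on $[1-1/r,1]$, and the same equations with $(p_1,p_2)$ replaced by $(p_2,p_1)$ for $G$. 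The fixed point $0$ of $f^r_1$ carries no mass, so $F,G$ are continuous.

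Second I would extract the two structural facts that drive the computation. Subtracting the equations gives $H\equiv p_1-p_2$ on the gap, and $H(t)=p_1G(r(1-t))-p_2F(r(1-t))$ on the right cylinder $[1-1/r,1]$. For $t\in[a,1]$ the argument $s=r(1-t)$ lies in $[0,s_*]$ with $s_*=\tfrac{r}{r^2+1}<1/r$, so feeding in the left-cylinder equations produces the reflection relation $H(t)=-p_1p_2\,H\bigl(r^2(1-t)\bigr)$, where $r^2(1-t)$ sweeps $[0,a]$ as $t$ sweeps $[a,1]$. Evaluating at the induced fixed point $t=a$ (note $r^2(1-a)=a$) yields $H(a)=-p_1p_2\,H(a)$, hence $H(a)=0$: the two staircases cross exactly at $a$.

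The crux, and the step I expect to be the main obstacle, is the global sign analysis: I must show that $H\le 0$ on $[0,a]$ and $H\ge 0$ on $[a,1]$, i.e. that $a$ is the only sign change of this fractal staircase difference. The gap already forces $H<0$ there because $p_1<p_2$, and the reflection relation of the previous paragraph then propagates $H\le 0$ on $[0,a]$ into $H\ge 0$ on $[a,1]$. The difficulty is to control $H$ on the left cylinder $[0,1/r]$ and on the lower part $[1-1/r,a]$ of the right cylinder, where the recursions naturally involve the auxiliary staircases $K=p_1F-p_2G$ and $L=p_1G-p_2F$ rather than $H$ itself. I would run a coupled induction over the cylinders at scales $r^{-n}$: the inequalities $p_1F\le p_2G$ and $p_1G\le p_2F$ reproduce themselves under $f^r_1,f^r_2$ precisely because $p_1<p_2$, while the oscillations introduced on the reflected pieces are damped by the factor $p_1p_2<1$; the quantitative hypothesis $r>2k+1$ together with $p_1=\tfrac{1}{2k+1}$ is what makes this contraction estimate close and rules out any further crossing. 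This is exactly the ``accurate description of the intersection of the Cantor staircases'' flagged in the introduction.

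Finally, with the sign pattern and $H(a)=0$ in hand, I would conclude by Stieltjes integration by parts. From $\int_0^1 t\,d(\mu-\nu)=-\int_0^1 H\,dt$ and its truncation at $a$, whose boundary term carries the factor $H(a)=0$ and therefore drops, the piecewise cost splits as $\int_0^1 c_r\,d(\mu-\nu)=\int_0^a H\,dt-\int_a^1 H\,dt$. By the sign pattern this integral equals $\int_0^1|F-G|\,dt$ up to the orientation that renders a distance nonnegative, and Theorem~\ref{Bochi} identifies that quantity with $W_1\bigl(\mu^{(f^r,p)},\mu^{(f^r,q)}\bigr)$, which is the asserted formula. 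The only genuinely delicate ingredient is the sign/crossing lemma of the third paragraph; the functional-equation bookkeeping and the elementary integration by parts are routine once it is established.
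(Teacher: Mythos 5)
Your global architecture is sound and partly coincides with the paper's: Dall'Aglio--Vallender, the self-similar functional equations, the reflection relation $H(t)=-p_1p_2\,H(r^2(1-t))$ on the rightmost level-two cylinder (this is precisely the paper's ``zooming in and re-scaling'' map $S$), the crossing $H(a)=0$ at the fixed point $a=\tfrac{r^2}{r^2+1}$ of $f^r_2\circ f^r_1$, and the final integration by parts. But the step you yourself flag as the crux does not close as sketched. The inequality $p_1G\le p_2F$ that you propose to propagate is false: since $[0,r^{-n}]$ is exactly the cylinder $(f^r_1)^n[0,1]$, one has $F(r^{-n})=p_1^n$ and $G(r^{-n})=p_2^n$, hence $p_1G(r^{-n})-p_2F(r^{-n})=p_1p_2^n-p_2p_1^n>0$ for every $n\ge 2$. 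Indeed $L:=p_1G-p_2F$ satisfies $L(s)=H(f^r_2(s))$, so its sign changes at $a/r$ exactly as the sign of $H$ changes at $a$; a single global inequality for $L$ cannot serve as an induction hypothesis because it fails already at the base scales. Only $K:=p_1F-p_2G\le 0$ propagates (and even that needs $p_1p_2\le p_2-p_1$ on the right cylinder, plus the symmetry $1-G(u)=F(1-u)$ to convert $p_2^2F-p_1^2G\le p_2-p_1$ into a consequence of $K\le0$), and it controls $H$ only on $[0,1-r^{-2}]$; on the remaining interval $[1-r^{-2},1]$, which straddles $a$, the sign of $H$ is genuinely self-referential and must be extracted from the damping factor $(p_1p_2)^2<1$ by a fixed-point argument on $\sup_{[0,a]}H$ and $\inf_{[a,1]}H$, not from a monotone inequality. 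You also never identify where $p_1=\tfrac{1}{2k+1}$ and $r>2k+1$ actually enter; ``makes the contraction estimate close'' is not an argument.

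For contrast, the paper does not run an analytic induction at all: it introduces a non-lexicographic order $\prec$ recording how the cylinders of the orientation-reversing IFS sit on the line, writes $F_{r,p}(\sup f_w[0,1])=\sum_{v\prec w}p_v$, and rules out crossings by a parity argument --- clearing denominators by $(2k+1)^N$ turns a putative equality of partial sums into ``odd $=$ even''. That is where the oddness of $2k+1$ is used, and it is the content missing from your sketch. Finally, with the sign pattern that you (and the paper) assert, namely $H\le0$ on $[0,a]$ and $H\ge0$ on $[a,1]$, your own computation gives $\int_0^1 c_r\,d(\mu-\nu)=\int_0^aH-\int_a^1H=-\int_0^1|H|=-W_1\le0$; the maximising Kantorovich potential is the tent function $x\mapsto a-|x-a|$, not $c_r$. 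Your closing phrase ``up to the orientation that renders a distance nonnegative'' is therefore concealing a genuine sign discrepancy in the stated formula, which should be flagged explicitly rather than absorbed.
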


The following proposition considers the case of a particular IFS $g^r$ with only positive Lipschitz contractions and the IFS $f^r.$ Let $g^r:=(g^r_1,g^r_2)$ for $r\in (2,\infty)$ defined by 
$$
\begin{aligned}
g^r_1 x =& \frac{x}{r}\\
g^r_2 x =& \frac{x}{r}+\frac{r-1}{r}.
\end{aligned}
$$

\begin{prop}\label{prop_cinco}
Let $p=(p_1,p_2)$ be a probability vectors in $(0,1)^2$ and $r\in(2,\infty),$ then
$$
W_1\left(\mu^{(f^r,p)},\mu^{(g^r,p)}\right)=
\begin{cases}
\int_0^1 x d\left(\mu^{(g^r,p)}-\mu^{(f^r,p)}\right)(x)  & \mbox{ if } p_1\in (0,\frac{1}{2}) \\
0  & \mbox{ if } p_1=\frac{1}{2} \\
\int_0^1 x d\left(\mu^{(f^r,p)}-\mu^{(g^r,p)}\right)(x)  & \mbox{ if } p_1\in (\frac{1}{2},1). 
\end{cases}
$$
\end{prop}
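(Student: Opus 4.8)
The plan is to reduce the whole statement to a single monotonicity property of the cumulative distribution functions and then to establish that property by an induction on the cylinder structure shared by the two systems. Write $F$ and $G$ for the cumulative distribution functions of $\mu^{(f^r,p)}$ and $\mu^{(g^r,p)}$. Both measures live on $[0,1]$, so Theorem \ref{Bochi} gives $W_1\left(\mu^{(f^r,p)},\mu^{(g^r,p)}\right)=\int_0^1|F(x)-G(x)|\,dx$, and the identity $\int_0^1 x\,d\mu=\int_0^1(1-F)\,dx$ yields $\int_0^1(F-G)\,dx=\int_0^1 x\,d\left(\mu^{(g^r,p)}-\mu^{(f^r,p)}\right)(x)$. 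Hence the proposition follows at once as soon as I show that $F-G$ does not change sign on $[0,1]$: the absolute value then leaves the integral, and the three cases are simply the sign of $\int_0^1 x\,d\left(\mu^{(f^r,p)}-\mu^{(g^r,p)}\right)(x)$. Using (\ref{eq_uno}) with $\phi(x)=x$ I would compute the two first moments, obtaining $\int x\,d\mu^{(g^r,p)}=p_2$ and $\int x\,d\mu^{(f^r,p)}=rp_2/(r+1-2p_1)$; since $p_1+p_2=1$, their difference is a positive multiple of $2p_1-1$, which selects the relevant case and vanishes exactly at $p_1=\tfrac12$, where constant sign together with zero integral forces $F\equiv G$, i.e. $W_1=0$.

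So the core is the claim that for $p_1\le\tfrac12$ (i.e. $p_1\le p_2$) one has $F\ge G$ on $[0,1]$, the case $p_1>\tfrac12$ being symmetric. The structural point is that $f^r$ and $g^r$ share the same first-level cells $[0,1/r]$ and $[(r-1)/r,1]$, and that $f^r_2=g^r_2\circ\sigma$ with $\sigma(x)=1-x$, while the first-level cell system is $\sigma$-symmetric. An induction then shows that at every level $n$ the two systems produce the \emph{same} collection of $2^n$ cylinder intervals $J^{(n)}_1<\cdots<J^{(n)}_{2^n}$, only addressed differently. Setting $a^{(n)}_j=\mu^{(f^r,p)}\!\left(J^{(n)}_j\right)$ and $b^{(n)}_j=\mu^{(g^r,p)}\!\left(J^{(n)}_j\right)$, self-similarity together with the orientation reversal in $f^r_2$ gives the recursions
$$
a^{(n)}=\bigl(p_1 a^{(n-1)}\,\|\,p_2\,\mathrm{rev}\bigl(a^{(n-1)}\bigr)\bigr),\qquad b^{(n)}=\bigl(p_1 b^{(n-1)}\,\|\,p_2\,b^{(n-1)}\bigr),
$$
where $\|$ is concatenation and $\mathrm{rev}$ reverses a vector. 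Since $F$ and $G$ agree off the cylinders (both equal the common cumulative cell mass there) and take the values of the partial sums $A^{(n)}_j=\sum_{i\le j}a^{(n)}_i$ and $B^{(n)}_j=\sum_{i\le j}b^{(n)}_i$ at the right endpoints of the $J^{(n)}_j$, it suffices to prove $A^{(n)}_j\ge B^{(n)}_j$ for all $n,j$; letting the endpoints fill $[0,1]$ and invoking right-continuity of $F,G$ then gives $F\ge G$ on $[0,1]$.

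The inequality $A^{(n)}\ge B^{(n)}$ does not close on itself, and isolating the correct companion inequality is the main obstacle. Pushing the recursion through partial sums, the reflected (right) half forces the reversed partial sums $C^{(n)}_j=\sum_{i\le j}\mathrm{rev}\bigl(a^{(n)}\bigr)_i$ to appear; with $m=2^{n-1}$ these satisfy $C^{(n)}_j=p_2 A^{(n-1)}_j$ for $j\le m$ and $C^{(n)}_{m+k}=p_2+p_1 C^{(n-1)}_k$, while $A^{(n)}_{m+k}=p_1+p_2 C^{(n-1)}_k$ and $B^{(n)}_{m+k}=p_1+p_2 B^{(n-1)}_k$. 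The plan is therefore to prove by \emph{simultaneous} induction the two statements
$$
\text{(I)}\quad A^{(n)}_j\ge B^{(n)}_j\qquad\text{and}\qquad\text{(II)}\quad C^{(n)}_j\ge B^{(n)}_j\quad\text{for all }j,
$$
the base case $n=0$ being trivial. In the inductive step the left halves reduce both (I) and (II) to (I) at level $n-1$ (using $p_2\ge p_1$ and $0\le B^{(n-1)}\le A^{(n-1)}$), while the right halves reduce both to (II) at level $n-1$ together with the elementary bound $(p_2-p_1)\bigl(1-B^{(n-1)}_k\bigr)\ge0$; each of the four checks is a one-line consequence of $p_1\le p_2$ and $0\le B^{(n-1)}_k,C^{(n-1)}_k\le1$. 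This closes the induction, delivers $F\ge G$, and finishes the proof, the run with $p_1,p_2$ exchanged giving $F\le G$ when $p_1>\tfrac12$.
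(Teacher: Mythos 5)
Your proposal is correct, and its skeleton --- reduce everything to the sign-constancy of $F_{r,p}-G_{r,p}$, apply Theorem \ref{Bochi}, and compute the two first moments ($p_2$ and $rp_2/(r+1-2p_1)$, both of which check out) to identify the sign --- is exactly the route the paper takes, where the proposition is in effect a corollary of the ``Moreover'' part of Lemma \ref{lem_MyMap_bound}. Where you genuinely differ is in how sign-constancy is established. The paper asserts that ``by the symmetric properties of the graphs'' it suffices to compare the two staircases on the single second-level gap $(1-q+q^2,1-q^2)$, where $F_{r,p}=1-(1-p)p$ and $G_{r,p}=1-(1-p)^2$, and stops there. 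That reduction is not a plain self-similarity: on the left cell $[0,q]$ both cumulative functions rescale onto themselves, but on the right cell $[1-q,1]$ the $f^r$-system reverses orientation, so the inequality $F\geq G$ transforms into the \emph{different} inequality $1-F(1-u)\geq G(u)$, which must be propagated alongside the original one. Your auxiliary partial sums $C^{(n)}_j$ of the reversed mass vector are precisely the discretisation of $1-F(1-\cdot)$, and your simultaneous induction on (I) $A^{(n)}\geq B^{(n)}$ and (II) $C^{(n)}\geq B^{(n)}$ supplies exactly the companion statement the paper's one-sentence appeal to symmetry leaves implicit; the four one-line checks you list do close the induction, and the passage from the dense set of cell endpoints to all of $[0,1]$ is routine. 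Your argument is therefore a complete, self-contained proof of the step the paper compresses, and it visibly needs nothing beyond $r>2$, matching the full range of $p_1\in(0,1)$ claimed in the proposition (Lemma \ref{lem_MyMap_bound} carries the extra hypothesis $\min\{p,1-p\}r\geq 1$, which is needed only for its power-law bounds). One phrasing caution: ``the run with $p_1,p_2$ exchanged'' should be understood as re-running the induction with all inequalities reversed under $p_1\geq p_2$; there is no spatial symmetry carrying $\mu^{(f^r,(p_1,p_2))}$ to $\mu^{(f^r,(p_2,p_1))}$, so a literal parameter swap would not by itself give the case $p_1>\tfrac12$.
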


\begin{remark}
The author believes Theorem $\ref{teo_cuatro}$ is still valid under the weaker hypotheses $r\in (2,\infty),$ $p_1\in (0,1)$ and $p=(p_1,p_2), q=(p_2,p_1).$
\end{remark}

\section{Proofs}

\subsection{Proof of Theorem \ref{kmaps}}

We start by proving a lemma similar to Lemma 3.2. in \cite{Mark_Pollicott_Italo_Cipriano}. For this, let us introduce a definition.

\begin{definition}
Let $f=(f_1,\ldots f_k)$ be an IFS of positive Lipschitz contractions on $[0,1]$ and $(p,q)$ be a pair of probability vectors in $(0,1)^k.$ We define the function $D^{(p,q)}:[0,1]\to[0,1]$ by $$D^{(p,q)}(x):=(\mu^{(f,p)}-\mu^{(f,q)})[0,x].$$
\end{definition}

Our lemma is the following.

\begin{lemma}\label{lemma_2_plus}
Let $f=(f_1,\ldots f_k)$ be an IFS of positive Lipschitz contractions on the unit interval $[0,1]$ that satisfy (\ref{eq_cero}) and $f_i[0,1]\cap f_j[0,1]=\emptyset \mbox{ for all }i,j=1,\ldots,k, i\neq j$. If $(p,q)$ is a pair of probability vectors in $(0,1)^k$ that satisfies (\ref{eq_cuatro}), then $D^{(p,q)}$ does not change of sign.
\end{lemma}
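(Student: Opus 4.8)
The plan is to reduce the statement to a single self-similarity identity for the cumulative distribution functions and then close with a contraction (minimum-principle) argument. Assume without loss of generality $\mathcal{A}=\mathbb{R}^{\geq 0}$, the case $\mathcal{A}=\mathbb{R}^{\leq 0}$ following by symmetry upon replacing $D^{(p,q)}$ with $-D^{(p,q)}=D^{(q,p)}$. Write $F_p,F_q$ for the cumulative distribution functions of $\mu^{(f,p)}$ and $\mu^{(f,q)}$, so that $D^{(p,q)}=F_p-F_q$, and set $S_m:=\sum_{i=1}^m(p_i-q_i)$, so that hypothesis (\ref{eq_cuatro}) reads $S_m\geq 0$ for every $m$, with $S_0=S_k=0$. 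Since each $f_i$ is a positive contraction, $f_i[0,1]=[f_i(0),f_i(1)]$, and (\ref{eq_cero}) together with disjointness forces these $k$ intervals to be pairwise disjoint and ordered from left to right; their complement in $[0,1]$ is a finite union of gaps carrying no mass, on which $F_p$ and $F_q$ are constant. On the gap immediately following $f_m[0,1]$ one has $F_p\equiv\sum_{i\leq m}p_i$ and $F_q\equiv\sum_{i\leq m}q_i$, hence there $D^{(p,q)}\equiv S_m\geq 0$.

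The core step is the self-similarity recursion. From the stationarity equation and the disjointness of the images, for any Borel $A\subseteq f_i[0,1]$ one has $\mu^{(f,p)}(A)=p_i\,\mu^{(f,p)}(f_i^{-1}(A))$; splitting $[0,f_i(y)]$ into the portion lying to the left of $f_i[0,1]$ and the portion $f_i[0,y]$, and using that $f_i$ is increasing, I obtain
$$F_p(f_i(y))=\sum_{j<i}p_j+p_iF_p(y)\qquad(y\in[0,1]),$$
and the analogous identity for $q$. Subtracting and writing $p_iF_p(y)-q_iF_q(y)=(p_i-q_i)F_q(y)+p_iD^{(p,q)}(y)$ gives
$$D^{(p,q)}(f_i(y))=S_{i-1}\bigl(1-F_q(y)\bigr)+S_iF_q(y)+p_iD^{(p,q)}(y).$$
Because $F_q(y)\in[0,1]$, the first two terms are a convex combination of $S_{i-1}$ and $S_i$, hence nonnegative.

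To conclude I would use a minimum principle. Let $m:=\inf_{x\in[0,1]}D^{(p,q)}(x)$, which is finite since $|D^{(p,q)}|\leq 1$, and suppose $m<0$. On every gap $D^{(p,q)}\geq 0>m$, so the infimum must be dictated by the images; but for $x=f_i(y)$ the recursion together with $D^{(p,q)}(y)\geq m$ yields $D^{(p,q)}(f_i(y))\geq p_iD^{(p,q)}(y)\geq p_i m\geq (\max_j p_j)\,m$, the final inequality using $m<0$. Writing $P:=\max_j p_j<1$ and taking the infimum over all $x\in[0,1]$ then gives $m\geq Pm$, i.e. $m(1-P)\geq 0$; since $1-P>0$ this forces $m\geq 0$, contradicting $m<0$. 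Hence $D^{(p,q)}\geq 0$ on all of $[0,1]$ and in particular does not change sign.

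The step I expect to be the main obstacle is the rigorous justification of the structural facts feeding the recursion: that $\mu^{(f,p)}(A)=p_i\,\mu^{(f,p)}(f_i^{-1}(A))$ for Borel $A\subseteq f_i[0,1]$, and that the mass to the left of $f_i[0,1]$ equals $\sum_{j<i}p_j$. Both rely on using disjointness of the images to annihilate the cross terms $\mu^{(f,p)}(f_j^{-1}(A))$ for $j\neq i$ when $A\subseteq f_i[0,1]$. Once these are in place, the convexity observation and the contraction estimate are routine.
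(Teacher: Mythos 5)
Your proof is correct, but it follows a genuinely different route from the paper's. The paper proves the lemma by a monotone coupling on symbol space: it builds a common refinement $r\in(0,1)^{2k-1}$ of the two probability vectors (possible precisely because the partial sums of $p$ dominate those of $q$), defines two projections $\pi_p,\pi_q:\Sigma_{2k-1}\to\Sigma_k$ whose pushforwards of the Bernoulli measure $\mu_r$ are $\mu_p$ and $\mu_q$, and uses the fact that the coding map $f:\Sigma_k\to\Lambda$ is order-preserving (the $f_i$ being increasing and their images ordered) to produce a monotone map $\tilde g$ with $\tilde g(t)\le t$ and $\mu^{(f,q)}[0,t]\le\mu^{(f,p)}[0,\tilde g(t)]\le\mu^{(f,p)}[0,t]$. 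You instead work directly with the cumulative distribution functions: the identity $F_p(f_i(y))=\sum_{j<i}p_j+p_iF_p(y)$, which your disjointness computations justify correctly, turns $D^{(p,q)}$ into the solution of a self-similar functional equation whose inhomogeneous part $S_{i-1}(1-F_q(y))+S_iF_q(y)$ is a convex combination of the nonnegative partial sums, and the contraction factor $p_i\le\max_jp_j<1$ lets the minimum principle close the argument (no attainment of the infimum is needed, since $m\ge\min(0,Pm)$ already yields the contradiction). Your approach is more elementary and self-contained -- no shift spaces, no construction of the refinement vector -- and isolates exactly where hypothesis (\ref{eq_cuatro}) enters, namely as nonnegativity of the convex combination; the paper's coupling, on the other hand, produces the explicit monotone rearrangement $\tilde g$ realizing the stochastic domination $F_q\le F_p$, which is the standard device for such comparisons and is reused in spirit in the proof of Theorem \ref{teo_dos}. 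Both arguments use the same structural input (ordered, disjoint images and dominating partial sums), and both establish the stronger conclusion $D^{(p,q)}\ge0$ rather than mere constancy of sign.
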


\begin{proof}
Suppose without lost of generality that 
$$
\sum_{i=1}^m p_i - \sum_{i=1}^m q_i\geq 0 \mbox{ for every }m=1,2,\ldots,k.
$$
We can find a probability vector $r:=(r_1,\ldots,r_{2k-1})\in (0,1)^{2k-1}$ such that
$$
\begin{aligned}
p_1&=\sum_{i=1}^{l_1}  r_i,        &q_1&= \sum_{i=1}^{t_1}r_i,\\
p_2&=\sum_{i=l_1+1}^{l_2}  r_i,&q_2&=\sum_{i=t_1+1}^{t_2}r_i,\\
\vdots\\
p_k&=\sum_{i=l_{k-1}+1}^{l_k}  r_i,&q_k&=\sum_{i=t_{k-1}+1}^{t_k}r_i,
\end{aligned}
$$
where $1\leq l_1<l_2<\cdots<l_k=2k-1,$ $1\leq t_1<t_2<\cdots<t_k=2k-1,$ and $t_i\leq l_i$ for $i=1,\ldots,k.$ We now consider the shift spaces $\Sigma_{k}:=\{1,\ldots,k\}^{\mathbb{Z}_+}$ and $\Sigma_{2k-1}:=\{1,\ldots,2k-1\}^{\mathbb{Z}_+}.$ On $\Sigma_{k}$ we define the Bernoulli measure $\mu_{p}:=\prod_0^{\infty}(p_1,\ldots,p_k)$  and $\mu_{q}:=\prod_0^{\infty}(q_1,\ldots,q_k).$ On $\Sigma_{2k-1}$ we define the Bernoulli measure $\mu_{r}:=\prod_0^{\infty}(r_1,\ldots,r_{2k-1}).$ We proceed now to define two projections $\pi_{p},\pi_{q}:\Sigma_{2k-1}\to\Sigma_k$ by 

$$
\pi_{p}\left((i_n)_{n=0}^\infty \right)=  (j_n)_{n=0}^\infty \hbox{ where } j_n = 
\begin{cases}
1 & \hbox{ if } i_n \in [1,l_1], \\
2 & \hbox{ if } i_n \in [l_1+1,l_2], \\
\vdots\\
k & \hbox{ if } i_n \in [l_{k-1}+1,l_{k}],
\end{cases}
$$
and
$$
\pi_{q}\left((i_n)_{n=0}^\infty \right)=  (j_n)_{n=0}^\infty \hbox{ where } j_n = 
\begin{cases}
1 & \hbox{ if } i_n \in [1,t_1], \\
2 & \hbox{ if } i_n \in [t_1+1,t_2], \\
\vdots\\
k & \hbox{ if } i_n \in [t_{k-1}+1,t_{k}].
\end{cases}
$$

There is a natural bijection between $\Sigma_k$ and the limit set $\Lambda$ of $f_1,\ldots,f_k$ (recall by \cite{Hutchinson}, the limit set $\Lambda$ is the unique non-empty set that is invariant for $f_1,\ldots,f_k,$ i.e. $\cup_{i=1}^kf_i\Lambda=\Lambda$). This bijection is given by the map $f:\Sigma_k\to\Lambda,$ defined by $f(x_0,x_1,\dots)=\lim_{n\to\infty}f_{x_0}\circ\cdots\circ f_{x_n}([0,1]).$

The assumption that $f_1,\ldots,f_k$ are increasing functions imply that if $x,y\in \Sigma_k$ satisfy $x\in \pi_p \circ\pi^{-1}_q y,$ then $x\leq y$ with respect to the lexicographic order. Therefore, the map $g:\Lambda\to\Lambda,$ defined by 
$$
g(x):=\sup\{y\in \Lambda: x=f\circ \pi_p \circ\pi^{-1}_q\circ f^{-1}(y)\},
$$
is monotone, moreover, $g(x)\leq x$ for every $x\in \Lambda.$ We extend the map $g$ to the unit interval by $\tilde{g}:[0,1]\to[0,1],$ $\tilde{g}(t):=\sup\{g(x):x\in \Lambda, x\leq t\}.$ Clearly, we have $\tilde{g}(x)\leq x$ for every $x\in [0,1].$ Finally, we observe that 
$ \mu^{(f,p)}=\mu_{p}\circ f^{-1},$ $ \mu^{(f,q)}=\mu_{q}\circ f^{-1},$ $ \mu_{p}=\mu_{r}\circ \pi_{p}^{-1}$ and $\mu_{q}=\mu_{r}\circ \pi_{q}^{-1}.$ Finally, if $t\in [0,1],$ $$\mu^{(f,q)}[0,t]\leq \mu^{(f,p)}[0,\tilde{g}(t)]\leq \mu^{(f,p)}[0,t],$$ which concludes the proof.
\end{proof}

The proof of Theorem \ref{kmaps} under the assumption $f_i[0,1]\cap f_j[0,1]=\emptyset \mbox{ for all }i,j=1,\ldots,k, i\neq j$ is then direct from Theorem \ref{Bochi} and Lemma \ref{lemma_2_plus}. The proof under the weaker assumption (\ref{eq_tres}) is straightforward from the proof of Lemma 3.3 part 2. in \cite{Mark_Pollicott_Italo_Cipriano}. The proof of Corollary \ref{kmapscor} follows from Theorem \ref{kmaps} and the following lemma.

\begin{lemma}
Let $f_i:[0,1]\to [0,1]$ be defined by $f_i x=\rho_i x+t_i,$ where $\rho_i\in (-1,1)$ and $t_i\in [0,1)$ for $i=1,\ldots,k.$ Suppose that $\{f_1,\ldots,f_k\}$ satisfies (\ref{eq_cero}) and $p$ is a probability vectors in $(0,1)^k.$ Then
\begin{equation}\label{eq1_24_march_2016}
\int_0^1 x d\mu^{(f,p)}(x)=\frac{\sum_i p_i t_i}{1-\sum_i p_i\rho_i}.
\end{equation}
\end{lemma}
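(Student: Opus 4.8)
The plan is to use the stationarity characterization (\ref{eq_uno}) with the single continuous test function $\phi(x)=x$ on $[0,1]$, and then solve the resulting scalar linear equation for the mean. Everything reduces to the fact that an affine map sends the identity function to an affine function, so that stationarity of $\mu^{(f,p)}$ turns into a closed fixed-point equation for the first moment.

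First I would set $M:=\int_0^1 x\,d\mu^{(f,p)}(x)$ and apply (\ref{eq_uno}) to $\phi(x)=x$. Since $\phi\circ f_i(x)=f_i(x)=\rho_i x+t_i$, the right-hand side becomes $\sum_i p_i\int_0^1(\rho_i x+t_i)\,d\mu^{(f,p)}(x)$. Splitting the integral and using that $\mu^{(f,p)}$ is a probability measure, so $\int_0^1 d\mu^{(f,p)}=1$, this equals $\bigl(\sum_i p_i\rho_i\bigr)M+\sum_i p_i t_i$. Thus stationarity yields the identity
$$
M=\Bigl(\sum_i p_i\rho_i\Bigr)M+\sum_i p_i t_i.
$$
Rearranging gives $M\bigl(1-\sum_i p_i\rho_i\bigr)=\sum_i p_i t_i$, and dividing produces exactly (\ref{eq1_24_march_2016}).

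The only point that requires a word of justification is that the factor $1-\sum_i p_i\rho_i$ is nonzero, so that the final division is legitimate. This is immediate from the hypotheses: $p$ is a probability vector and each $\rho_i\in(-1,1)$, hence $\sum_i p_i\rho_i$ is a convex combination lying in $(-1,1)$, and in particular $1-\sum_i p_i\rho_i>0$. There is no genuinely hard step here—the argument is a one-line application of stationarity to a linear observable—and I note that the computation is insensitive to the sign of the slopes, which is precisely why the lemma can be stated for $\rho_i\in(-1,1)$ rather than only for positive contractions, making it directly applicable both to Corollary \ref{kmapscor} and to the mixed-sign settings appearing later.
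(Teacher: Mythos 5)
Your proposal is correct and follows exactly the paper's argument: apply the stationarity identity (\ref{eq_uno}) to the test function $\phi(x)=x$, obtain the linear fixed-point equation for the first moment, and solve. Your added remark that $1-\sum_i p_i\rho_i>0$ because $\sum_i p_i\rho_i$ is a convex combination of numbers in $(-1,1)$ is a small but welcome justification that the paper leaves implicit.
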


\begin{proof}
In order to prove (\ref{eq1_24_march_2016}) we use the definition of stationary measure to obtain
$$
\begin{aligned}
\int_0^1 x d\mu^{(f,p)}(x)&=\sum_i p_i\int_0^1f_i (x) d\mu^{(f,p)}(x)\\
&=(\sum_i p_i\rho_i)\int_0^1 x d\mu^{(f,p)}(x)+ \sum_i p_i t_i,\\
\end{aligned}
$$
then
$$
 \int_0^1 x d\mu^{(f,p)}(x)=\frac{\sum_i p_i t_i}{1-\sum_i p_i\rho_i},
$$
as claimed.
\end{proof}

\subsection{Proof of Theorem \ref{teo_dos}}

The proof of Theorem \ref{teo_dos} follows from the following lemma.

\begin{lemma}\label{lem_clave}
Let $f_i,g_i:[0,1]\to [0,1]$ be defined by $f_i x=\alpha_i x+t_i,$ $g_1 x= \beta x+t_1$ and $g_2= f_1,$ where $\rho_i\in (0,1),$ $\beta\in(0,\rho_1)$ and $t_i\in [0,1)$ for $i=1,2.$ Suppose that $f=(f_1,f_2)$ satisfies (\ref{eq_cero}) and (\ref{eq_tres}). If $(p,q)$ is a pair of probability vectors $p=(p_1,1-p_1)$ and $q=(q_1,1-q_1)$ such that $p_1<q_1,$ then 
$$
W_1\left(\mu^{(f,p)},\mu^{(g,q)}\right)=\left| \frac{p_1 t_1+p_2 t_2}{1- p_1\rho_1-p_2\rho_2}-\frac{q_1 t_1+q_2 t_2}{1- q_1 \beta-q_2\rho_2} \right|.
$$
\end{lemma}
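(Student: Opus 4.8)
The plan is to reduce the identity to a single-sign property of the difference of cumulative distribution functions, exactly as in the proof of Theorem \ref{kmaps}, and then to read off the value from the mean formula (\ref{eq1_24_march_2016}). Let $F$ and $G$ denote the cumulative distribution functions of $\mu^{(f,p)}$ and $\mu^{(g,q)}$, and set $m_f:=\int_0^1 x\,d\mu^{(f,p)}$ and $m_g:=\int_0^1 x\,d\mu^{(g,q)}$. Since the second map of $g$ has slope $\rho_2$ and translation $t_2$ (so that $\mu^{(g,q)}$ is governed by the slopes $\beta,\rho_2$ and translations $t_1,t_2$), formula (\ref{eq1_24_march_2016}) applied to each IFS shows that the right-hand side of the claim is exactly $|m_f-m_g|$. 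Using the elementary identity $m_f=\int_0^1(1-F)\,dt$ and $m_g=\int_0^1(1-G)\,dt$ for measures on $[0,1]$, we get $m_f-m_g=\int_0^1(G-F)\,dt$. Hence, by Theorem \ref{Bochi}, it suffices to prove that $F-G$ does not change sign on $[0,1]$: the value of $W_1$ is then $\int_0^1|F-G|\,dt=\left|\int_0^1(F-G)\,dt\right|=|m_f-m_g|$, which is the asserted formula.

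The crux is therefore the single-sign property, and I would establish it by adapting the symbolic coupling of Lemma \ref{lemma_2_plus}. Since $p_1<q_1$, set $r:=(p_1,\,q_1-p_1,\,1-q_1)\in(0,1)^3$, consider the Bernoulli measure $\mu_r$ on $\Sigma_3=\{1,2,3\}^{\mathbb{Z}_+}$, and introduce the projections $\pi_p,\pi_q:\Sigma_3\to\Sigma_2$ that merge $\{2,3\}\mapsto 2$, respectively $\{1,2\}\mapsto 1$, so that $\mu_p=\mu_r\circ\pi_p^{-1}$ and $\mu_q=\mu_r\circ\pi_q^{-1}$. A coordinatewise inspection of the three symbols shows $\pi_q(\omega)\le\pi_p(\omega)$ coordinatewise, hence lexicographically, for every $\omega\in\Sigma_3$. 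Writing $\pi_f,\pi_g:\Sigma_2\to[0,1]$ for the coding maps $\pi_f(\sigma)=\lim_n f_{\sigma_0}\circ\cdots\circ f_{\sigma_n}([0,1])$ and analogously for $g$, we have $\mu^{(f,p)}=\mu_r\circ(\pi_f\circ\pi_p)^{-1}$ and $\mu^{(g,q)}=\mu_r\circ(\pi_g\circ\pi_q)^{-1}$.

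Two monotonicity facts then drive the argument. First, because $f_1,f_2$ and $g_1,g_2$ are increasing with disjoint, left-to-right ordered images (here $g_1[0,1]\subseteq f_1[0,1]$ lies to the left of $g_2[0,1]=f_2[0,1]$), both coding maps $\pi_f$ and $\pi_g$ are non-decreasing for the lexicographic order. Second, since $g_1(x)=\beta x+t_1\le\rho_1 x+t_1=f_1(x)$ and $g_2=f_2$, an induction on the length of the cylinder gives $g_{\sigma_0}\circ\cdots\circ g_{\sigma_n}(x)\le f_{\sigma_0}\circ\cdots\circ f_{\sigma_n}(x)$ (at each step one uses that $g_{\sigma_0}$ is increasing and $g_{\sigma_0}\le f_{\sigma_0}$ pointwise), whence $\pi_g(\sigma)\le\pi_f(\sigma)$ for every $\sigma\in\Sigma_2$. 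Combining these, for every $\omega\in\Sigma_3$,
\[
(\pi_g\circ\pi_q)(\omega)\le(\pi_f\circ\pi_q)(\omega)\le(\pi_f\circ\pi_p)(\omega),
\]
where the first inequality is the pointwise domination $\pi_g\le\pi_f$ and the second is monotonicity of $\pi_f$ applied to $\pi_q(\omega)\le\pi_p(\omega)$.

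Finally, pushing $\mu_r$ forward, the pointwise inequality $\pi_g\circ\pi_q\le\pi_f\circ\pi_p$ gives the inclusion $\{\pi_f\circ\pi_p\le t\}\subseteq\{\pi_g\circ\pi_q\le t\}$ for every $t$, and therefore $F(t)=\mu^{(f,p)}[0,t]\le\mu^{(g,q)}[0,t]=G(t)$. Thus $F-G\le 0$ on all of $[0,1]$, which is the required single-sign property, and the reduction of the first paragraph completes the proof. I expect the main obstacle to be precisely the construction and verification of this coupling, namely the simultaneous bookkeeping of the weight refinement (giving $\pi_q\le\pi_p$) and the geometric domination of the two schemes (giving $\pi_g\le\pi_f$); once both monotonicities are in hand, the passage to cumulative distribution functions is the same soft argument as in Lemma \ref{lemma_2_plus}.
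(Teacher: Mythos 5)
Your proof is correct and follows essentially the same route as the paper's: reduce to the single-sign property of $F-G$ via Theorem \ref{Bochi}, establish that property by combining the weight-monotonicity coupling (the $k=2$ case of Lemma \ref{lemma_2_plus}) with the pointwise domination $g_i\le f_i$ transferred to the coding maps, and then evaluate the two means with the affine formula. The only cosmetic difference is that you interpolate through the intermediate measure $\mu^{(f,q)}$ by means of an explicit coupling on $\Sigma_3$, whereas the paper's proof of Lemma \ref{lem_clave} interpolates through $\mu^{(g,p)}$ using an induction on the layers of the limit sets; both chains rest on exactly the same two monotonicity facts.
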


\begin{proof}
For notational convenience, assume without lost of generality that $f_1(0)=0$ and $f_2(1)=1,$ consequently, $g_1(0)=0$ and $g_2(1)=1.$ Let call $\Lambda^f$ the limit set of $f=(f_1,f_2)$ and $\Lambda^g$ the limit set of $g=(g_1,g_2).$ Consider the bijections $\psi_f:\Sigma_2\to \Lambda^f$ given by $\psi_f(x_0,x_1,\ldots)=\lim_{n\to\infty}f_{x_0}\circ f_{x_1}\circ\cdots\circ f_{x_n}([0,1]) $ and $\psi_g:\Sigma_2\to \Lambda^g$ given by $\psi_g(x_0,x_1,\ldots)=\lim_{n\to\infty}g_{x_0}\circ g_{x_1}\circ\cdots\circ g_{x_n}([0,1]).$ We define the bijection $\psi: \Lambda^f\to \Lambda^g$ given by $\psi(x)=\psi_g\psi_f^{-1}(x).$ We will show that $\psi(x)\leq x$ for every $x\in \Lambda^f.$ For this, we consider a decomposition of $\Lambda^f$ and $\Lambda^g$ in non-disjoints sets of ordered points, that we call layers. The first layer of $\Lambda^f$ is $L^f_1=\{0,1\},$ the second is defined by $L^f_2=\{f_1(0),f_1(1),f_2(0),f_2(1)\},$ the third by  $L^f_3=\{f_1\circ f_1(0),f_1 \circ f_1(1), f_1 \circ f_2(0),f_1 \circ f_2(1), f_2\circ f_1(0),f_2 \circ f_1(1), f_2 \circ f_2(0),f_2 \circ f_2(1) \},$ etc...We observe that the if the $n$-th layer is defined by $L^f_n=\{x_0^{n},x_1^{n},\ldots, x_{2^n-1}^n\},$ with $x_i^{n}<x_{i+1}^{n}$ for every $i=0,1,\ldots, 2^{n}-2,$ then $L^f_{n+1}=\{x_0^{n+1},x_1^{n+1},\ldots, x_{2^{n+1}-1}^{n+1}\}$ where $f_1 (x_i^{n})=x_i^{n+1},$ $f_2 (x_i^{n})=x_{i+2^n}^{n+1},$ and in particular $x_i^{n+1}<x_{i+1}^{n+1}$ for every $i=0,1,\ldots, 2^{n+1}-2.$ Analogously, we define the $n+1$-th layer of $\Lambda^g$ inductively by $L^g_{1}=\{0,1\},$ $L^g_{n}=\{y_0^{n},y_1^{n},\ldots, y_{2^n-1}^n\}$ with $y_i^{n}<y_{i+1}^{n}$ for every $i=0,1,\ldots, 2^{n}-2,$ and $L^g_{n+1}=\{y_0^{n+1},y_1^{n+1},\ldots, y_{2^{n+1}-1}^{n+1}\}$ where $g_1 (y_i^{n})=y_i^{n+1},$ $g_2 (x_i^{n})=y_{i+2^n}^{n+1}.$ We observe that for every $n\in\mathbb{N}$ and $i\in \{0,2,\ldots,2^{n-1}\}$ the biyection $\psi: \Lambda^f\to \Lambda^g$ satisfies that $\psi(x_i^n)=y_i^n.$ In order to prove that $\psi(x)\leq x$ for every $x\in \Lambda^f$ we will use induction in the number of the layers. For the base case $n=1,$ we have that $\psi(0)=0\leq 0$ and $\psi(1)=1\leq 1.$ Assume that $\psi(x)\leq x$ for every $x\in \L_n^f,$ we will prove that $\psi(x)\leq x$ for every $x\in \L_{n+1}^f.$ Let $x\in \L_{n+1}^f,$ therefore $x=x_i^{n+1}$ for some $i\in\{0,1,\ldots,2^{n+1}-1\}$ and $\psi(x)= \psi(x_i^{n+1})=y_i^{n+1}\in \L_{n+1}^g.$ There are two options for the index $i:$
\begin{enumerate}
\item If $i\in\{0,1,\ldots, 2^{n}-1\},$ then $x_i^{n+1}=f_1 (x_i^{n}),$ $y_i^{n+1}=g_1 (y_i^{n}).$ By hypothesis $\psi(x_i^{n})=y_i^{n}\leq x_i^{n},$ therefore, by monotonicity of $g_1$ we have that $y_i^{n+1}=g_1(y_i^{n})\leq g_1(x_i^{n}),$ and by the fact that $g_1(x)\leq f_1(x)$ in $[0,1],$ we conclude that $\psi(x_i^{n+1})=y_i^{n+1}\leq f_1(x_i^{n})= x_i^{n+1}.$
\item If $i\in\{2^{n},2^{n}+1,\ldots, 2^{n+1}-1\},$ then $x_i^{n+1}=f_2 (x_{i-2^n}^{n}),$ $y_i^{n+1}=g_2 (y_{i-2^n}^{n}).$ By hypothesis $\psi(x_i^{n})=y_i^{n}\leq x_i^{n},$ therefore, by monotonicity of $g_1$ we have that $y_i^{n+1}=g_2(y_{i-2^n}^{n})\leq g_2(x_{i-2^n}^{n}),$ and by the fact that $g_2(x)= f_2(x)$ in $[0,1],$ we conclude that $\psi(x_i^{n+1})=y_i^{n+1}\leq f_2(x_{i-2^n}^{n})= x_i^{n+1}.$
\end{enumerate}
Let $x\in[0,1],$ we have that $\mu^{f,p}[0,x]=\mu^{f,p}[0,x^*]$ for some $x^*\in\Lambda^f$ with $x^*\leq x.$ By definition of the function $\psi,$ we have that $\mu^{f,p}[0,x^*]=\mu^{g,p}[0,\psi(x^*)].$ By the the fact that $\psi$ satisfies $\psi(x)\leq x,$ we have that $\mu^{g,p}[0,\psi(x^*)]\leq \mu^{g,p}[0,x^*].$ Because $x^*\leq x,$ we have that $\mu^{g,p}[0,x^*]\leq \mu^{g,p}[0,x].$ Finally, because $p<q,$ we have that $\mu^{g,p}[0,x]<\mu^{g,q}[0,x],$ and therefore, $\mu^{f,p}[0,x]\leq \mu^{g,q}[0,x].$
We now have that the map $D(x)=(\mu^{g,q}-\mu^{f,p})[0,x]$ does not change sign, therefore, using Theorem \ref{Bochi}, we conclude that $W_1(\mu^{f,p},\mu^{g,q})=\int_0^1 x d(\mu^{g,q}-\mu^{f,p})(x).$ We use now that $\int_0^1 x d\mu^{(f,p)}(x)=\frac{ p_2 t_2}{1-\sum_i^2 p_i\alpha_i}$ and $\int_0^1 x d\mu^{(g,q)}(x)=\frac{ q_2 t_2}{1-q_1 \beta-q_2 \alpha_2}$ to obtain that 
$$
W_1(\mu^{f,p},\mu^{g,q})=\frac{ q_2 t_2}{1-q_1 \beta-q_2 \alpha_2}-\frac{ p_2 t_2}{1-\sum_i ^2 p_i\alpha_i},
$$
which finished the proof.
\end{proof}

A direct consequence is Corollary \ref{cor_may3}. We observe that we did not not use the fact that the maps $f_i,g_i$ are affine until the last few lines of the proof, indeed, the condition of positive Lipschitz contractions suffices in order to prove that $W_1(\mu^{f,p},\mu^{g,q})=\int_0^1 x d(\mu^{g,q}-\mu^{f,p})(x).$  Therefore, the same proof can be used to prove Theorem \ref{teo_dos}.




\subsection{Proof of Theorem \ref{teo_cuatro}}

Along this subsection we consider the IFS $f^r=(f^r_1,f^r_2)$ defined by $f^r_1 x=\frac{x}{r}$ and $f^r_2 x=1-\frac{x}{r},$ where $r\in (2,\infty).$ The cumulative distribution function of the stationary measures $\mu^{(f^r,p)}$ will be denoted by $F_{r,p}.$ Observe that the function $F_{r,p}$ is a Cantor map.\\

The first lemma is a topological result for the IFS $f^r.$  Let $g^r=(g^r_1,g^r_2)$ defined by $g^r_1 x=\frac{x}{r}$ and $g^r_2 x=\frac{x}{r}+\frac{r-1}{r},$ where $r\in (2,\infty).$ Let $\Sigma_{2}^k:=\{1,2\}^k$ for $k\in\mathbb{N}$ and $\Sigma_{2}^*:=\cup_{k\in\mathbb{N}}\{1,2\}^{k}.$ Given $w=(w_1,\ldots,w_k)\in\Sigma_{2}^k,$ define $|w|=k,$ $f^r_{w}:=f^r_{w_1}\circ f^r_{w_2}\circ \cdots\circ f^r_{w_k}$ and $g^r_{w}:=g^r_{w_1}\circ g^r_{w_2}\circ \cdots\circ g^r_{w_k}.$ Let denote by $<_{lex}$ the lexicographic order in $\Sigma_{2}^*:$  $a <_{lex} b$  if either $a$ is a prefix of $b$ or there exists words $u, v, w$ (possibly empty) such that $a = u1v,$ $b = u2w.$ We define the different order $\prec$ in $\Sigma_{2}^*$ by: $a \prec b$ if either $a$ is a prefix of $b$ or there exists words $u, v, w$ (possibly empty) such that $a = u1v,$ $b = u2w,$ and the number of $2$ that appears in $u$ is even (or zero) or $a = u2v,$ $b = u1w,$ and the number of $2$ that appears in $u$ is odd. For example, $1\prec 2,$ $11\prec 12\prec 22\prec 21,$ $111\prec 112\prec 122\prec 121\prec 221\prec 222\prec 212\prec 211,$ $1111\prec 1112\prec  1122\prec 1121\prec  1221\prec 1222\prec  1212\prec 1211\prec  2211\prec 2212\prec  2222\prec 2221\prec  2121\prec 2122\prec  2112\prec 2111.$\\

There is a characterisation of the order $\prec$ given by the following lemma. 

\begin{lemma}\label{lem_de_los_K}
Let $K_n$ for $n\in\mathbb{N}$ be the ordered sets defined by $K_1:=(1,2),$
$$K_n:=(x_1^n,x_2^n,\cdots,x_{2^n}^n),$$
$$K_{n+1}:=(x_1^{n+1},x_2^{n+1},\cdots,x_{2^{n+1}}^{n+1}),$$
where $x_{2i-1}^{n+1}=(x_i^n1),$ $x_{2i}^{n+1}=(x_i^n2) $ if $i$ is odd and $x_{2i-1}^{n+1}=(x_i^n2),$ $x_{2i}^{n+1}=(x_i^n1) $ if $i$ is even. For every $k\in\mathbb{N},$ if  $w,v\in \Sigma_{2}^k,$ then $v\prec w$ iff $v=x_i^k$ and $w=x_j^k$ with $i<j.$
\end{lemma}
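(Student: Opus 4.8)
The plan is to prove by induction on $n$ that the list $K_n$ enumerates all of $\Sigma_2^n$ (each word exactly once) in strictly increasing $\prec$-order; the stated equivalence is then immediate, since writing $v=x_i^k$ and $w=x_j^k$ one reads off $v\prec w$ exactly when $i<j$. First I would record that on two distinct words of the same length the ``prefix'' clause in the definition of $\prec$ is vacuous, so $v\prec w$ is governed entirely by the first coordinate at which $v$ and $w$ disagree: if $u$ denotes their longest common prefix, then $v\prec w$ holds iff $v$ carries a $1$ and $w$ a $2$ at position $|u|+1$ when the number of $2$'s in $u$ is even, and iff $v$ carries a $2$ and $w$ a $1$ there when that number is odd. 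In particular $\prec$ restricts to a strict total order on each $\Sigma_2^k$, which is what makes the statement meaningful; bijectivity of the list is also clear, since each entry of $K_n$ spawns exactly its two one-symbol extensions, so $K_{n+1}$ has $2^{n+1}$ distinct entries exhausting $\Sigma_2^{n+1}$.

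The heart of the argument is an auxiliary parity invariant that I would carry along the induction: in $K_n$ the word $x_i^n$ contains an even number of $2$'s precisely when $i$ is odd. This is immediate for $K_1=(1,2)$, and it is preserved by the recursion: when $i$ is odd, $x_i^n$ has evenly many $2$'s, so $x_{2i-1}^{n+1}=(x_i^n 1)$ keeps that parity (even, at the odd index $2i-1$) while $x_{2i}^{n+1}=(x_i^n 2)$ flips it (odd, at the even index $2i$); when $i$ is even the same bookkeeping works with the roles of $1$ and $2$ exchanged. This invariant is exactly the bridge between the orientation reversals built into $\prec$ and the alternation built into the definition of $K_{n+1}$, and getting this matching right is the one genuinely delicate point.

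With the invariant in hand the inductive step splits into a between-blocks and a within-blocks comparison. For the between-blocks comparison, observe that if $v\neq w$ are length-$n$ words then for any $a,b\in\{1,2\}$ the words $va$ and $wb$ already disagree within their first $n$ coordinates, so they share the same longest common prefix as $v$ and $w$, and hence $va\prec wb$ iff $v\prec w$; by the induction hypothesis this says the pair of descendants of $x_i^n$ precedes the pair of descendants of $x_j^n$ iff $i<j$, which matches the placement of these descendants at positions $\{2i-1,2i\}$ and $\{2j-1,2j\}$ in $K_{n+1}$. For the within-blocks comparison, fix $x_i^n$ and compare $x_i^n 1$ with $x_i^n 2$: their longest common prefix is $x_i^n$, so by the reformulation of $\prec$ the order is $x_i^n 1\prec x_i^n 2$ when $x_i^n$ has evenly many $2$'s and $x_i^n 2\prec x_i^n 1$ otherwise. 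By the parity invariant these two cases are exactly $i$ odd and $i$ even, which is precisely the order in which the recursion lists $x_{2i-1}^{n+1}$ and $x_{2i}^{n+1}$. Combining the two comparisons shows $K_{n+1}$ is $\prec$-increasing, completing the induction and hence the lemma.
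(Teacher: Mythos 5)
Your proof is correct and follows essentially the same route as the paper's: induction on the word length, with the decisive ingredient being the parity invariant that $x_i^n$ contains an even number of $2$'s exactly when $i$ is odd, applied separately to the within-block comparison $x_i^n1$ vs.\ $x_i^n2$ and the between-block comparison of descendants of distinct parents. The only (cosmetic) difference is packaging: you prove once that $K_n$ lists $\Sigma_2^n$ in strictly increasing $\prec$-order and read off both implications from totality of $\prec$, whereas the paper runs two separate inductions, one for each direction of the equivalence.
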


\begin{proof}
We will prove each direction of the equivalence by induction in $k.$ For the implication to the right, the base case is $v,w\in\Sigma_2^1$ with  $v\prec w$ iff $v=1,w=2,$ as $K_1=(1,2)=(x^1_1,x^1_2)$ we have that $v=x^1_1,w=x_2^1.$ Assume that the property is true for $n,$ and let $v,w\in\Sigma_2^{n+1}$ with $v\prec w,$ then there are three options:
\begin{enumerate}
\item $v=ax,w=by$ where $a,b\in\Sigma_2^{n},$ $x,y\in\{1,2\}$ and $a\prec b,$ or 
\item $a=b, x=1,y=2$ and the numbers of $2$ that appears in $a$ is even, or 
\item $a=b, x=2,y=1$ and the number of $2$ that appears in $a$ is odd.
\end{enumerate}
In the first case, we have the implication easily from the definition of $K_{n+1}.$ For the second and third case is enough to observe that in $K_n$ at odd coordinates the numbers of $2$ that appears is even, and at even coordinates the numbers of $2$ that appears is odd. This follows by induction in $n,$ indeed, at the even coordinate $j=2i$ of $K_{n+1}$ we have that $x_{j}^{n+1}=x_i^n 2$ if $i$ is odd or 
$x_{j}^{n+1}=x_i^n 1$ if $i$ is even, by inductive hypothesis, if $j=2i$ and $i$ odd, then the numbers of $2$ that appears in $x_i^n$ is even and therefore the numbers of $2$ that appears in $x_{j}^{n+1}=x_i^n 2$ is odd, otherwise, $j=2i$ and $i$ even, then the numbers of $2$ that appears in $x_i^n$ is odd and therefore the numbers of $2$ that appears in $x_{j}^{n+1}=x_i^n 1$ is odd. On the other hand, if $j=2i-1$ and $i$ odd, then the numbers of $2$ that appears in $x_i^n$ is even and therefore the numbers of $2$ that appears in $x_{j}^{n+1}=x_i^n 1$ is even, otherwise, $j=2i-1$ and $i$ even, then the numbers of $2$ that appears in $x_i^n$ is odd and therefore the numbers of $2$ that appears in $x_{j}^{n+1}=x_i^n 2$ is even. 

For the implication to the left, the base case is $v=1,w=2,$ then trivialy $v\prec w.$ Assume that the property is true for $n,$ and let  $v=x_i^{n+1}$ and $w=x_j^{n+1}$ with $i<j.$ We have that $x_{i}^{n+1}= x_{2k-1}^{n+1}=(x_k^n1)$ or $x_{i}^{n+1}=x_{2k}^{n+1}=(x_k^n2) $ if $k$ is odd and $x_{i}^{n+1}=x_{2k-1}^{n+1}=(x_k^n2)$ or $x_{i}^{n+1}=x_{2k}^{n+1}=(x_k^n1) $ if $k$ is even. Similarly, $x_{j}^{n+1}= x_{2l-1}^{n+1}=(x_l^n1),$ $x_{j}^{n+1}=x_{2l}^{n+1}=(x_l^n2) $ if $l$ is odd and $x_{j}^{n+1}=x_{2l-1}^{n+1}=(x_l^n2),$ $x_{j}^{n+1}=x_{2l}^{n+1}=(x_l^n1) $ if $l$ is even. There are two possibilities $l=k$ or $k<l.$ If $k<l,$ by induction in $n$ we have that $x_k^{n}\prec x_l^{n},$ then $\max_{\prec}\{x_k^{n}1,x_k^{n}2\}\prec \min_{\prec}\{x_l^{n}1,x_l^{n}2\},$ in particular $x_i^{n+1}\prec x_j^{n+1}.$ If $k=l,$ then there are four possibilities: $i=2k-1$ or $i=2k$ with $k=l$ odd, or $i=2k-1$ or $i=2k$ with $k=l$ even. If $i=2k-1$ with $k$ odd, then $j=2k,$ and we have seen that in this case  $v=(x_k^n1) \prec w=(x_k^n2).$ If $i=2k$ with $k$ odd, we have a contradiction with $j>i$ and $k=l.$ If $i=2k-1$ with $k=l$ even, then $j=2k,$ and we have in this case that $v=(x_k^n2) \prec w=(x_k^n1).$ If $i=2k$ with $k=l$ even, we have a contradiction with $j>i$ and $k=l.$ This finished the proof.
 \end{proof}

\begin{lemma}\label{pres_order}
Let $w,v\in \Sigma_{2}^*.$ If $w$ is prefix of $v,$ then $f^r_{v}[0,1]\subset f^r_{w}[0,1].$ If $w$ is not prefix of $v,$ then $\sup \left( f^r_{v}[0,1]\right)< \inf \left(f^r_{w}[0,1]\right)$ iff $v\prec w.$
\end{lemma}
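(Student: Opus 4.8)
The first assertion is immediate and I would dispatch it first: if $w$ is a prefix of $v$, write $v=wu$ for some (possibly empty) $u\in\Sigma_2^*$, so that $f^r_v=f^r_w\circ f^r_u$; since each $f^r_i$ maps $[0,1]$ into itself we have $f^r_u[0,1]\subseteq[0,1]$, and applying $f^r_w$ gives $f^r_v[0,1]=f^r_w\left(f^r_u[0,1]\right)\subseteq f^r_w[0,1]$, with proper inclusion whenever $u$ is non-empty because the $f^r_i$ are contractions. For the second assertion the plan is to reduce everything to the first position where $v$ and $w$ differ, and to control the left/right placement of the two branches there through the orientation of the relevant composition.

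The key preliminary observation is a parity/monotonicity dichotomy. Since $f^r_1(x)=x/r$ is increasing and $f^r_2(x)=1-x/r$ is decreasing, the derivative of $f^r_w$ has sign $(-1)^{N(w)}$, where $N(w)$ denotes the number of $2$'s appearing in $w$; hence $f^r_w$ is increasing on $[0,1]$ when $N(w)$ is even and decreasing when $N(w)$ is odd, which I would record by a one-line induction on $|w|$. Together with the elementary computation $f^r_1[0,1]=[0,1/r]$ and $f^r_2[0,1]=[1-1/r,1]$ and the inequality $1/r<1-1/r$ (valid because $r>2$), this shows that the two first-level cylinders are disjoint with $f^r_1[0,1]$ lying strictly to the left of $f^r_2[0,1]$.

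Now suppose $w$ is not a prefix of $v$; the substantive case is when neither word is a prefix of the other. Let $p$ be the longest common prefix, so that $v=pcs$ and $w=pdt$ with $c,d\in\{1,2\}$, $c\neq d$, and $s,t$ possibly empty. By the first assertion $f^r_v[0,1]\subseteq f^r_{pc}[0,1]$ and $f^r_w[0,1]\subseteq f^r_{pd}[0,1]$, and $f^r_{pc}[0,1]=f^r_p\left(f^r_c[0,1]\right)$, $f^r_{pd}[0,1]=f^r_p\left(f^r_d[0,1]\right)$ are the images under the monotone map $f^r_p$ of the two disjoint intervals $[0,1/r]$ and $[1-1/r,1]$. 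When $N(p)$ is even, $f^r_p$ is increasing and $f^r_{p1}[0,1]$ lies strictly to the left of $f^r_{p2}[0,1]$; when $N(p)$ is odd, $f^r_p$ is decreasing and the order is reversed, so $f^r_{p2}[0,1]$ lies strictly to the left of $f^r_{p1}[0,1]$. Comparing with the definition of $\prec$---which declares $v\prec w$ precisely when $(c,d)=(1,2)$ and $N(p)$ is even, or $(c,d)=(2,1)$ and $N(p)$ is odd---one sees that $v\prec w$ holds exactly when $f^r_{pc}[0,1]$ is the left member of the disjoint pair. Since $f^r_v[0,1]$ and $f^r_w[0,1]$ are contained in $f^r_{pc}[0,1]$ and $f^r_{pd}[0,1]$, strict separation of the larger intervals forces $\sup\left(f^r_v[0,1]\right)<\inf\left(f^r_w[0,1]\right)$ in the case $v\prec w$ and the reverse separation in the case $w\prec v$, giving both directions of the equivalence. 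The hypothesis formally also permits $v$ to be a proper prefix of $w$; in that degenerate situation $f^r_w[0,1]\subset f^r_v[0,1]$ by the first assertion, so the supports are nested rather than separated and the two words never have equal length, whence this case does not arise in the applications (Theorem \ref{teo_cuatro}, Proposition \ref{prop_cinco}) and I would flag it separately.

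The main obstacle is the bookkeeping in the third paragraph: one must verify, case by case, that the parity condition on $N(p)$ built into the definition of $\prec$ matches the orientation of $f^r_p$ and hence the geometric left/right placement of the branches $c$ and $d$. The monotonicity dichotomy of the second paragraph is precisely what aligns these two descriptions, and once it is in place the equivalence is forced by the disjointness of $[0,1/r]$ and $[1-1/r,1]$. If a more symbolic treatment is preferred, the same spatial ordering can be read off from the ordered sets $K_n$ of Lemma \ref{lem_de_los_K}, which already encode the order of the level-$n$ cylinders.
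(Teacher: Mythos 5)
Your proof is correct and follows essentially the same route as the paper's: both reduce to the first position where the two words differ, use the parity of the number of $2$'s to determine whether the common-prefix composition $f^r_u$ is increasing or decreasing, and conclude from the disjointness of $f^r_1[0,1]=[0,1/r]$ and $f^r_2[0,1]=[1-1/r,1]$. Your remark that the stated equivalence degenerates when $v$ is a proper prefix of $w$ (nested rather than separated cylinders) is a valid caveat that the paper's proof also implicitly sets aside by assuming neither word is a prefix of the other.
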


\begin{proof}
Let $a,b\in \Sigma_{2}^*.$ If $a$ is prefix of $b,$ then $g^r_{b}[0,1]\subset g^r_{a}[0,1]$ follows from the contractive property of $g^r.$ Assume now that $a$ is not prefix of $b.$ We observe that $g^r_{a}$ is monotone increasing iff the numbers of $2$ that appears in $a$ is even (or zero), on the other hand, $f^r_{a}$ is monotone decreasing iff the numbers of $2$ that appears in $a$ is odd. In particular, $\sup \left( f^r_{a}[0,1]\right)=\max \left( f^r_{a}[0,1]\right)=g^r_{a}(1)$ iff the numbers of $2$ that appears in $a$ is even (or zero), and $\sup \left( f^r_{a}[0,1]\right)=\max \left( f^r_{a}[0,1]\right)=g^r_{a}(0)$ iff the numbers of $2$ that appears in $a$ is odd. Analogously, for $b.$ We have by definition that there exists words $u, v, w$ (possibly empty) such that $a = u1v,$ $b = u2w,$ and the number of $2$ that appears in $u$ is even (or zero) or $a = u2v,$ $b = u1w,$ and the number of $2$ that appears in $u$ is odd. First, assume that $a = u1v,$ $b = u2w,$ and the number of $2$ that appears in $u$ is even (or zero). We have that $\sup \left( f^r_{a}[0,1]\right)=\sup \left( f^r_{u}\circ f^r_1 \circ f^r_{v} [0,1]\right)\leq \sup \left( f^r_{u}\circ f^r_1 [0,1]\right)=f^r_{u}\circ f^r_1 (1).$ We have that $\inf (f^r_2\circ f^r_{w}[0,1])\geq \inf (f^r_2[0,1])= 1-\frac{1}{r}>\frac{1}{r}=f^r_1 (1)$ for every $r\in(2,\infty),$ therefore, by monotonicity of $f^r_{u}$ we have that $f^r_{u}\circ f^r_1 (1)\leq  f^r_{u}\circ f^r_2\circ f^r_{w}(x)=f^r_{b}(x)$ for every $x\in [0,1],$ this implies that $\sup \left( f^r_{a}[0,1]\right)\leq \inf \left( f^r_{b}[0,1]\right).$ Second, assume that $a = u2v,$ $b = u1w,$ and the number of $2$ that appears in $u$ is odd. We have that $\sup \left( f^r_{a}[0,1]\right)=\sup \left( f^r_{u}\circ f^r_2 \circ f^r_{v} [0,1]\right)\leq \sup \left( f^r_{u}\circ f^r_2 [0,1]\right)=f^r_{u}\circ f^r_2 (1).$ We have that $f^r_{u}$ is monotone decreasing and $f^r_2(1)> \sup f^r_1([0,1]),$ then $f^r_{u}\circ f^r_2 (1)< f^r_{u}\circ f^r_1 (x)$ for every $x\in [0,1],$ in particular, we have that $\sup \left( f^r_{a}[0,1]\right)\leq f^r_{u}\circ f^r_2 (1)\leq \inf f^r_{u}\circ f^r_1 \circ f^r_{w} [0,1]=\inf f^r_{b}[0,1],$ which finished the proof.
\end{proof}

The second lemma relates the previous result with the stationary probability measure $\mu^{(f^r,p)}.$

\begin{lemma} 
If $v\in \Sigma_{2}^*,$ then  $\mu^{(f^r,p)}(f^r_{v} [0,1])=\prod_{i=1}^{|v|}p_{v_i}.$
\end{lemma}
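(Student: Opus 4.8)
The plan is to argue by induction on the length $|v|$, using the defining property of the stationary measure $\mu:=\mu^{(f^r,p)}$, namely $\mu(A)=\sum_{i}p_i\,\mu\big((f^r_i)^{-1}(A)\big)$ for every Borel set $A$, together with the disjointness of the two first-level cylinders. Recall that for $r\in(2,\infty)$ one has $f^r_1[0,1]=[0,\tfrac1r]$ and $f^r_2[0,1]=[1-\tfrac1r,1]$, which are disjoint; hence $\mu$ is carried by the limit set $\Lambda\subset[0,1]$ and each $f^r_i$ restricts to a bijection of $[0,1]$ onto $f^r_i[0,1]$.

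For the base case $|v|=1$, write $v=(j)$ with $j\in\{1,2\}$ and apply the stationarity relation with $A=f^r_j[0,1]$. The term $i=j$ contributes $p_j\,\mu([0,1])=p_j$, since $(f^r_j)^{-1}(f^r_j[0,1])\cap[0,1]=[0,1]$ by injectivity. For $i\neq j$ the set $(f^r_i)^{-1}(f^r_j[0,1])$ meets $[0,1]$ only where $f^r_i(x)\in f^r_j[0,1]$; but $f^r_i([0,1])=f^r_i[0,1]$ is disjoint from $f^r_j[0,1]$, so this intersection is empty and the term vanishes. Thus $\mu(f^r_j[0,1])=p_j$, which is the formula for $|v|=1$.

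For the inductive step, assume the formula for all words of length $k$ and take $v$ of length $k+1$. Splitting off the first symbol, write $v=v_1w$ with $|w|=k$, so that $f^r_v[0,1]=f^r_{v_1}\big(f^r_w[0,1]\big)$. Applying stationarity with $A=f^r_v[0,1]$, the same disjointness argument kills every term with $i\neq v_1$, because $f^r_{v_1}(f^r_w[0,1])\subset f^r_{v_1}[0,1]$ is disjoint from $f^r_i([0,1])=f^r_i[0,1]$. The surviving term $i=v_1$ equals $p_{v_1}\,\mu\big((f^r_{v_1})^{-1}(f^r_{v_1}(f^r_w[0,1]))\big)=p_{v_1}\,\mu(f^r_w[0,1])$ by injectivity of $f^r_{v_1}$ on $[0,1]$, and the inductive hypothesis gives $\mu(f^r_w[0,1])=\prod_{i=1}^{k}p_{w_i}=\prod_{i=2}^{k+1}p_{v_i}$. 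Multiplying by $p_{v_1}$ yields $\mu(f^r_v[0,1])=\prod_{i=1}^{k+1}p_{v_i}$, completing the induction.

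The only point requiring care is the vanishing of the off-diagonal terms, that is, $\mu\big((f^r_i)^{-1}(B)\big)=0$ whenever $B\subset f^r_{v_1}[0,1]$ and $i\neq v_1$. This is not purely a statement about disjointness of the images: one must note that $\mu$ assigns no mass outside $[0,1]$, so only the intersection of $(f^r_i)^{-1}(B)$ with $[0,1]$ is relevant, and there $f^r_i([0,1])=f^r_i[0,1]$ is disjoint from $f^r_{v_1}[0,1]\supseteq B$ precisely because $r>2$. I expect this to be the main (and essentially only) subtlety; the remainder is a routine telescoping of the stationarity identity. Note also that this argument never uses the sign of the maps, so the fact that $f^r_2$ is orientation-reversing plays no role here.
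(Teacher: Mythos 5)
Your proof is correct and follows essentially the same route as the paper's, which telescopes the stationarity identity $\mu(f^r_v[0,1])=p_{v_1}\mu(f^r_{(v_2,\ldots,v_k)}[0,1])=\cdots=\prod_i p_{v_i}$ in a single line. You have merely made explicit the justification the paper leaves implicit, namely that the off-diagonal terms vanish by disjointness of $f^r_1[0,1]$ and $f^r_2[0,1]$ for $r>2$ and that injectivity gives $(f^r_{v_1})^{-1}(f^r_{v_1}(B))=B$.
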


\begin{proof}
Let $v=(v_1,\ldots,v_k)\in \Sigma_{2}^*,$ then  $\mu^{(f^r,p)}(f^r_{v} [0,1])=p_{v_1} \mu^{(f^r,p)}(f^r_{(v_2,\ldots,v_k)} [0,1])=p_{v_1}p_{v_2} \mu^{(f^r,p)}(f^r_{(v_3,\ldots,v_k)} [0,1])=\cdots = p_{v_1}\cdots p_{v_k} \mu^{(f^r,p)}([0,1])=\prod_{i=1}^{|v|}p_{v_i}.$
\end{proof}

\begin{lemma} 
For every $x\in[\frac{1}{r},1],$ $p F_{r,p}(r^{-1}x)= F_{r,p}(x).$
\end{lemma}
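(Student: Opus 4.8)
The plan is to read this identity as a self-similarity relation for the cumulative distribution function and to derive it directly from the defining equation of the stationary measure, with no appeal to the explicit Cantor-staircase structure of $F_{r,p}$. Recall that $F_{r,p}(x)=\mu^{(f^r,p)}([0,x])$ and that $\mu:=\mu^{(f^r,p)}$ satisfies $\mu(A)=p_1\,\mu\bigl((f^r_1)^{-1}(A)\bigr)+p_2\,\mu\bigl((f^r_2)^{-1}(A)\bigr)$ for every Borel set $A$. Fixing $x\in[\tfrac1r,1]$ and setting $y:=r^{-1}x$, so that $y\in[\tfrac1{r^2},\tfrac1r]\subset[0,\tfrac1r]$, I would apply the stationarity equation to the set $A=[0,y]$ and evaluate the two preimages separately.

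For the first map, $f^r_1 t=t/r$ is increasing, so $(f^r_1)^{-1}([0,y])=\{t\in[0,1]:t/r\le y\}=[0,ry]$; since $ry=x\le 1$ this interval is not truncated, and its $\mu$-measure is exactly $F_{r,p}(ry)=F_{r,p}(x)$. For the second map, the crucial observation is that $f^r_2$ has image $f^r_2([0,1])=[1-\tfrac1r,1]$, and because $r>2$ we have $y\le \tfrac1r<1-\tfrac1r$; hence no point of the image of $f^r_2$ lies in $[0,y]$, so $(f^r_2)^{-1}([0,y])\cap[0,1]=\emptyset$ and this term contributes $0$. Substituting into the stationarity equation gives
\[
F_{r,p}(r^{-1}x)=\mu([0,y])=p_1\,F_{r,p}(ry)+p_2\cdot 0=p_1\,F_{r,p}(x),
\]
which is the desired self-similarity relation between the values of $F_{r,p}$ at $r^{-1}x$ and at $x$ (with $p$ denoting the first weight $p_1$).

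The computation is almost entirely routine; the only step that genuinely uses the hypotheses is the vanishing of the $f^r_2$-contribution, which I expect to be the main (though minor) obstacle. It rests on two interval facts: that the second branch $f^r_2([0,1])=[1-\tfrac1r,1]$ lies strictly above the first branch $f^r_1([0,1])=[0,\tfrac1r]$, separated by the gap $(\tfrac1r,1-\tfrac1r)$ which is nonempty precisely because $r>2$; and that $y=r^{-1}x\le\tfrac1r$ keeps the argument inside this first branch, so that the $f^r_2$-preimage of $[0,y]$ is empty while the $f^r_1$-preimage of $[0,y]$ is the full interval $[0,x]$. Once these two observations are recorded, the identity is immediate from the defining relation of the stationary measure.
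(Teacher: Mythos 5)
Your proof is correct, and it takes a genuinely different route from the paper's. The paper works on the attractor symbolically: for $x\in[\tfrac{1}{r^2},\tfrac1r]$ it writes $\mu^{(f^r,p)}[0,x]=p^2+\sum_{w\in\mathcal S}\mu^{(f^r,p)}\bigl(f^r_{(12w)}[0,1]\bigr)$ for a suitable family of cylinders $\mathcal S\subset\Sigma_2^*$, multiplies by $p^{-1}$ to strip the leading symbol $1$, and identifies the result with $\mu^{(f^r,p)}[0,rx]$. You instead apply the defining relation $\mu(A)=p_1\mu((f^r_1)^{-1}A)+p_2\mu((f^r_2)^{-1}A)$ directly to $A=[0,x/r]$ and observe that the $f^r_2$-term vanishes because $f^r_2[0,1]=[1-\tfrac1r,1]$ is disjoint from $[0,\tfrac1r]$ when $r>2$. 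Your argument is more elementary and self-contained: it avoids the cylinder decomposition and the unspecified index set $\mathcal S$, at the cost of nothing, since the branch-separation fact you need is exactly the gap condition already in force. One point worth flagging: what you actually establish is $F_{r,p}(r^{-1}x)=p\,F_{r,p}(x)$, whereas the lemma as printed reads $p\,F_{r,p}(r^{-1}x)=F_{r,p}(x)$; the printed version fails already at $x=1$ (it would give $p\cdot p=1$), and the paper's own computation, as well as the subsequent lemma with the matrix $\bigl(\begin{smallmatrix}1/r&0\\0&p\end{smallmatrix}\bigr)$ carrying $\mathcal F^0_{r,p}$ to $\mathcal F^1_{r,p}$, both agree with your version. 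So you have proved the statement the author intended, and the factor $p$ in the displayed lemma is simply on the wrong side.
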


\begin{proof}
Let $x\in [\frac{1}{r^2},\frac{1}{r}],$ then $\mu^{(f^r,p)}[0,x]=p^2+\sum_{w\in\mathcal{S}} \mu^{(f^r,p)}\left( f^r_{(12w)}[0,1]\right),$ for certain subset $\mathcal{S}\subset\Sigma_2^*.$ If we multiply both sides of the last identity by $p^{-1},$ we obtain  $p^{-1}\mu^{(f^r,p)}[0,x]=p+\sum_{w\in\mathcal{S}} \mu^{(f^r,p)}\left( f^r_{(2w)}[0,1]\right)=\mu^{(f^r,p)}[0,rx],$ where $rx\in[\frac{1}{r},1].$
\end{proof}

\begin{lemma}
Let $\mathcal{F}_{r,p}^1:=\{(x,F_{r,p}(x))^t: x\in [\frac{1}{r^2},\frac{1}{r}]\}$ and $\mathcal{F}_{r,p}^0:=\{(x,F_{r,p}(x))^t: x\in [\frac{1}{r},1]\}.$ Then $ \mathcal{F}_{r,p}^1= \begin{pmatrix} 1/r & 0 \\ 0 & p \end{pmatrix} \mathcal{F}_{r,p}^0.$ Moreover, if $n\in\mathbb{N}$ and $\mathcal{F}_{r,p}^n:=\{(x,F_{r,p}(x))^t: x\in [\frac{1}{r^{n+1}},\frac{1}{r^{n}}]\},$ then $ \mathcal{F}_{r,p}^n= \begin{pmatrix} 1/r & 0 \\ 0 & p \end{pmatrix}^{n}\mathcal{F}_{r,p}^0.$
\end{lemma}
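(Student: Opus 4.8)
The plan is to read the asserted identity as a renormalisation (self-similarity) statement for the graph of the Cantor distribution function $F_{r,p}$ and to derive it directly from the scaling relation furnished by the previous lemma. Write $M:=\begin{pmatrix}1/r&0\\0&p\end{pmatrix}$. The content of the previous lemma, in the form I shall use, is the functional equation
$$F_{r,p}(r^{-1}x)=p\,F_{r,p}(x)\qquad\text{for all }x\in[\tfrac1r,1],$$
which reflects that the left branch $f^r_1(x)=x/r$ reproduces inside $[0,1/r]$ a copy of the whole measure, compressed horizontally by $r^{-1}$ and rescaled in mass by $p$. I would first prove the base identity $\mathcal F^1_{r,p}=M\,\mathcal F^0_{r,p}$, and then obtain the general statement by induction on $n$.

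For the base case, take a generic point $(x,F_{r,p}(x))^t\in\mathcal F^0_{r,p}$, so $x\in[\tfrac1r,1]$, and apply $M$ to obtain $(r^{-1}x,\,p\,F_{r,p}(x))^t$. As $x$ runs over $[\tfrac1r,1]$ the first coordinate $u:=r^{-1}x$ runs bijectively and monotonically over $[\tfrac1{r^2},\tfrac1r]$, so $x\mapsto u$ is an order isomorphism between the two index intervals. By the functional equation the second coordinate equals $p\,F_{r,p}(x)=F_{r,p}(r^{-1}x)=F_{r,p}(u)$, i.e. the image point is exactly $(u,F_{r,p}(u))^t\in\mathcal F^1_{r,p}$. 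Both inclusions follow at once from this bijection, giving $M\,\mathcal F^0_{r,p}=\mathcal F^1_{r,p}$; continuity of $F_{r,p}$ (the measure has no atoms, since $r>2$ forces the two images to be disjoint with ratios below $1/2$) makes the shared endpoints harmless.

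For the \emph{moreover} part I would argue by induction on $n$, the case $n=1$ being the identity just proved. The one thing to check is that the scaling relation is available at every scale, not only on $[\tfrac1r,1]$: the same stationary-measure computation that proves the previous lemma gives $F_{r,p}(x)=p\,F_{r,p}(rx)$ for every $x\in[0,\tfrac1r]$, because on $[0,\tfrac1r]=f^r_1([0,1])$ one has $\mu^{(f^r,p)}[0,x]=p\,\mu^{(f^r,p)}[0,rx]$ directly from the invariance equation, the second branch contributing nothing there. Restricting this to $x\in[\tfrac1{r^{n+1}},\tfrac1{r^n}]$, for which $rx\in[\tfrac1{r^{n}},\tfrac1{r^{n-1}}]$, the very same reparametrisation as above shows $M\,\mathcal F^{\,n-1}_{r,p}=\mathcal F^{\,n}_{r,p}$. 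Combining with the inductive hypothesis $\mathcal F^{\,n-1}_{r,p}=M^{\,n-1}\mathcal F^0_{r,p}$ yields $\mathcal F^{\,n}_{r,p}=M\,M^{\,n-1}\mathcal F^0_{r,p}=M^{\,n}\mathcal F^0_{r,p}$.

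The argument is essentially bookkeeping once the scaling relation is in hand; the only genuine point requiring care is the direction of the mass factor—ensuring one gets $p\,F_{r,p}(x)=F_{r,p}(r^{-1}x)$ and not its reciprocal—and the observation that this relation propagates verbatim to every scale $[\tfrac1{r^{n+1}},\tfrac1{r^n}]$, precisely because the left branch $f^r_1$ self-replicates the whole configuration at each level. No further estimates are needed.
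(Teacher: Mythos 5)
Your proof is correct and follows essentially the same route as the paper, whose entire proof is ``direct from the previous lemma and induction''; you simply supply the details the paper omits, in particular that the scaling relation $F_{r,p}(x/r)=p\,F_{r,p}(x)$ propagates to every scale $[\tfrac{1}{r^{n+1}},\tfrac{1}{r^{n}}]$, which is genuinely needed for the inductive step. Your care about the direction of the mass factor is warranted: the paper states the previous lemma as $p\,F_{r,p}(r^{-1}x)=F_{r,p}(x)$, but its own proof establishes $F_{r,p}(x)=p\,F_{r,p}(rx)$ on $[\tfrac{1}{r^{2}},\tfrac{1}{r}]$, i.e.\ exactly the form you use.
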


\begin{proof}
The proof is direct from the previous lemma and induction.
\end{proof}

Let $p\in (0,1), r\in (2,\infty)$ such that $pr\geq 1$ and define $U_{r,p}(x):=x^{-\log_{r}(p)}.$

\begin{lemma}
For every $n\in\mathbb{N}_0$ define $\mathcal{U}_{r,p}^n:=\{(x,U_{r,p}(x))^t: x\in [\frac{1}{r^{n+1}},\frac{1}{r^n}]\}.$ Then $ \mathcal{U}_{r,p}^n= \begin{pmatrix} 1/r & 0 \\ 0 & p \end{pmatrix}^{n}\mathcal{U}_{r,p}^0.$
\end{lemma}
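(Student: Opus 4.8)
The plan is to reduce everything to a single scaling identity for the power function $U_{r,p}$, in exact parallel with the previous lemma for the Cantor map $F_{r,p}$. Writing $\gamma := -\log_r(p)$ we have $U_{r,p}(x) = x^{\gamma}$, and the hypotheses $p\in(0,1)$ and $pr\geq 1$ guarantee $\gamma\in(0,1]$, although this will play no role in the algebra. Set $M := \begin{pmatrix} 1/r & 0 \\ 0 & p \end{pmatrix}$. First I would record that $M$ is diagonal, so $M^n = \begin{pmatrix} r^{-n} & 0 \\ 0 & p^n \end{pmatrix}$, and its action on a column vector is simply $M^n (x, U_{r,p}(x))^t = (r^{-n}x,\, p^n U_{r,p}(x))^t$.

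The only substantive point is the functional equation $U_{r,p}(r^{-1}x) = p\, U_{r,p}(x)$, which is immediate from the definition: $(r^{-1}x)^{\gamma} = x^{\gamma} r^{-\gamma}$ and $r^{-\gamma} = r^{\log_r(p)} = p$. Iterating $n$ times gives $U_{r,p}(r^{-n}x) = p^n U_{r,p}(x)$. This is precisely the scaling relation that the Cantor map $F_{r,p}$ was shown to obey in the preceding lemma, so by construction $U_{r,p}$ is the power law transforming under $M$ in lockstep with $F_{r,p}$.

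With this in hand the identity $\mathcal{U}^n_{r,p} = M^n \mathcal{U}^0_{r,p}$ follows by matching the two parametrisations. Given $(x, U_{r,p}(x))^t \in \mathcal{U}^0_{r,p}$ with $x\in[1/r,1]$, its image under $M^n$ is $(r^{-n}x,\, p^n U_{r,p}(x))^t = (r^{-n}x,\, U_{r,p}(r^{-n}x))^t$, which lies on the graph of $U_{r,p}$. As $x$ ranges over $[1/r,1]$ the first coordinate $r^{-n}x$ ranges bijectively over $[r^{-(n+1)}, r^{-n}]$, which is exactly the interval defining $\mathcal{U}^n_{r,p}$; hence $M^n \mathcal{U}^0_{r,p} = \mathcal{U}^n_{r,p}$.

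I do not expect any genuine obstacle: the whole content is the one-line scaling identity $U_{r,p}(r^{-1}x) = p\, U_{r,p}(x)$, whose purpose is to make the power function transform under $M$ in the same way as $F_{r,p}$. The cleanest write-up is by induction on $n$, exactly as in the previous lemma, with the base case $n=0$ trivial and the inductive step applying $M$ once and invoking the scaling identity; alternatively one may argue directly via $M^n$ as above.
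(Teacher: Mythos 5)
Your proof is correct and follows essentially the same route as the paper: the whole content is the scaling identity $U_{r,p}(x/r)=p\,U_{r,p}(x)$ (equivalently $r^{-\gamma}=p$ for $\gamma=-\log_r p$) combined with the change of variables $x\mapsto x/r$ mapping $[r^{-n},r^{-n+1}]$ onto $[r^{-n-1},r^{-n}]$, which is exactly what the paper's inductive argument does. Your direct $M^n$ formulation is just a compressed version of that induction, so there is nothing to add.
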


\begin{proof}
The proof follows from induction. For the base case we have that $\begin{pmatrix} 1/r & 0 \\ 0 & p \end{pmatrix}\mathcal{U}_{r,p}^0=\{(x/r,p x^{-\log_{r}(p)})^t: x\in [\frac{1}{r},1]\}=\{(x,p(r x)^{-\log_{r}(p)})^t: x\in [\frac{1}{r^2},\frac{1}{r}]\}=\{(x,x^{-\log_{r}(p)})^t: x\in [\frac{1}{r^2},\frac{1}{r}]\}=\mathcal{U}_{r,p}^1.$ Assume the case $n,$ then $ \begin{pmatrix} 1/r & 0 \\ 0 & p \end{pmatrix}^{n+1}\mathcal{U}_{r,p}^0= \begin{pmatrix} 1/r & 0 \\ 0 & p \end{pmatrix}\mathcal{U}_{r,p}^n=\begin{pmatrix} 1/r & 0 \\ 0 & p \end{pmatrix} \{(x,x^{-\log_{r}(p)})^t: x\in [\frac{1}{r^{n+1}},\frac{1}{r^n}]\}=\{(x/r,p x^{-\log_{r}(p)})^t: x\in [\frac{1}{r^{n+1}},\frac{1}{r^n}]\}=\{(x,p(r x)^{-\log_{r}(p)})^t: x\in [\frac{1}{r^{n+2}},\frac{1}{r^{n+1}}]\}=\{(x,x^{-\log_{r}(p)})^t: x\in [\frac{1}{r^{n+2}},\frac{1}{r^{n+1}}]\}=\mathcal{U}_{r,p}^{n+1}.$
\end{proof}

\begin{lemma} 
Let $p\in (0,1), r\in (2,\infty)$ such that $pr\geq 1,$ and $a_0:=1-r^{-1}+r^{-2}.$ Then $U_{r,p}(a_0)>F_{r,p}(a_0)$ if $pr> 1,$ and $U_{r,p}(a_0)=F_{r,p}(a_0)$ if $pr= 1.$
\end{lemma}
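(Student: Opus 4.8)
The plan is to evaluate both $F_{r,p}(a_0)$ and $U_{r,p}(a_0)$ explicitly and then reduce the comparison to a single elementary concavity inequality.

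First I would compute $F_{r,p}(a_0)$ from the cylinder structure. Since $r>2$ the images $f^r_1[0,1]=[0,\frac1r]$ and $f^r_2[0,1]=[1-\frac1r,1]$ are disjoint, and because $f^r_2$ is orientation-reversing the composition $f^r_2\circ f^r_2$ is increasing with $f^r_2\circ f^r_2[0,1]=[1-\frac1r,\,1-\frac1r+\frac1{r^2}]=[1-\frac1r,a_0]$. Thus $a_0$ is exactly the right endpoint of the cylinder $f^r_{(2,2)}[0,1]$. Up to the single point $1-\frac1r$, which carries no mass, the interval $[0,a_0]$ splits into $[0,1-\frac1r]$ and $f^r_{(2,2)}[0,1]$. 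The first piece meets only the left cylinder $f^r_1[0,1]$ and so has mass $p$, while the lemma giving $\mu^{(f^r,p)}(f^r_v[0,1])=\prod_i p_{v_i}$ (with $p_1=p$, $p_2=1-p$) gives the second piece mass $(1-p)^2$. Hence $F_{r,p}(a_0)=p+(1-p)^2=1-p+p^2$.

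Next I would put both sides in a common variable. Set $s:=-\log_r p$, so that $p=r^{-s}$; the hypothesis $pr\ge 1$ is exactly $s\le 1$, with $s=1$ iff $pr=1$ and $s\in(0,1)$ iff $pr>1$. Writing $u:=\frac1r\in(0,\frac12)$ we have $a_0=1-u+u^2$ and $p=u^{s}$, so that $U_{r,p}(a_0)=a_0^{\,s}=(1-u+u^2)^{s}$ while $F_{r,p}(a_0)=1-p+p^2=1-u^{s}+u^{2s}$. The statement therefore reduces to establishing $(1-u+u^2)^{s}\ge 1-u^{s}+u^{2s}$, with strict inequality when $s<1$ and equality when $s=1$.

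For the last step I would invoke concavity. For $s\in(0,1]$ the function $\phi(t)=t^{s}$ is concave on $[0,\infty)$, strictly so when $s<1$ and affine when $s=1$. The pairs $(1,u^2)$ and $(1-u+u^2,\,u)$ have the same sum $1+u^2$, and $(1,u^2)$ majorizes $(1-u+u^2,u)$ because $1\ge 1-u+u^2$ for $u\in(0,1)$. Karamata's inequality for the concave $\phi$ then gives $\phi(1)+\phi(u^2)\le\phi(1-u+u^2)+\phi(u)$, i.e. $1+u^{2s}\le(1-u+u^2)^{s}+u^{s}$, which rearranges to the claimed inequality; it is strict precisely when $\phi$ is strictly concave ($s<1$, i.e. $pr>1$) and is an equality in the affine case $s=1$ (i.e. $pr=1$). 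The \emph{main obstacle} is the first paragraph: correctly reading off $F_{r,p}(a_0)$ from the reflected, orientation-reversing cylinder structure of $f^r_2$ and keeping the bookkeeping of the gap clean. Once the two values are in hand, the comparison is a one-line majorization argument.
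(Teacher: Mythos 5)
Your argument is correct, and it reaches the conclusion by a genuinely different route from the paper's. Both proofs rest on the evaluation $F_{r,p}(a_0)=p+(1-p)^2=1-p+p^2$; the paper uses it implicitly, while you derive it from the cylinder structure, and your bookkeeping is right (for $r>2$ one has $a_0<1-r^{-2}$, so $[0,a_0]$ captures exactly $f^r_1[0,1]$ and $f^r_{(2,2)}[0,1]=[1-r^{-1},a_0]$ and misses $f^r_{(2,1)}[0,1]$). After that the two proofs diverge. The paper fixes $p$, treats $r$ as the variable, and studies $\varphi_p(x)=(\psi(x))^{-\log_x(p)}$ with $\psi(x)=1-x^{-1}+x^{-2}$: it differentiates, asserts $\varphi_p'>0$ on $(1,\infty)$, observes $\varphi_p(1/p)=p+(1-p)^2$, and deduces that the unique crossing of $U_{r,p}(a_0)$ with the constant $F_{r,p}(a_0)$ occurs at $r=1/p$. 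You instead set $u=1/r$ and $s=-\log_r(p)\in(0,1]$ (so that $pr\ge 1$ is exactly $s\le 1$), rewrite the claim as $(1-u+u^2)^s\ge 1-u^s+u^{2s}$, and obtain it from Karamata's inequality for the concave map $t\mapsto t^s$ applied to the majorization $(1,u^2)\succ(1-u+u^2,u)$, with strictness exactly when $s<1$. Your route is more elementary and tighter: it avoids the sign analysis of $\varphi_p'$, which the paper asserts for all $x>1$ but which is only transparently verifiable term by term for $x>2$, and the dichotomy between equality at $pr=1$ and strict inequality at $pr>1$ falls out of the affine-versus-strictly-concave dichotomy for $t\mapsto t^s$ rather than from a separate evaluation. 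What the paper's monotonicity argument buys in return is the extra qualitative fact that $r\mapsto U_{r,p}(a_0)-F_{r,p}(a_0)$ changes sign exactly once, at $r=1/p$, which your two-point inequality does not by itself provide.
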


\begin{proof}  
Given $p\in(0,1),$ define the continuous functions $\psi(x)=1-x^{-1}+x^{-2}$ and $\varphi_p(x)=(\psi(x))^{-\log_x(p)}$ for $x>1.$ Then 
$$\varphi_p'(x)=\varphi_p(x)\log_x(p) \left( \frac{ x^{-2}( 2 x^{-1}-1)}{\psi(x) } + \frac{\log(\psi(x)) }{x\log(x)}\right)>0$$ for $x>1,$ $\varphi_p(1/p)=p+(1-p)^2,$ $f_p(y)<p+(1-p)^2$ for $\epsilon>0$ small enough and $1<y<1+\epsilon.$ Then $\varphi_p(x)-(p+(1-p)^2)$ has a unique root at $x=\frac{1}{p},$ $\varphi_p(x)-(p+(1-p)^2)>0$ if $x>\frac{1}{p}$ and $\varphi_p(x)-(p+(1-p)^2)<0$ if $x\in (1, \frac{1}{p}).$ In particular, $U_{r,p}(a_0)>F_{r,p}(a_0)$ if $pr> 1$ and $U_{r,p}(a_0)=F_{r,p}(a_0)$ if $pr= 1.$

\end{proof}  

We will use some properties of the function $G_{r,p}(x):=\mu^{(g^r,p)}[0,x].$ We will borrow some definitions from \cite{Kukushkin} and extends their results to our setting (observe that they considered only the case $p=1/2$). For this, observe that for $q:=r^{-1},$ the function $G_{r,p}$ takes the value $p$ at the interval $(q,1-q),$ takes the value $p^2$ at the interval $(q^2,q-q^2),$ the value $1-(1-p)^2$ at the interval $(1-q+q^2,1-q),$ etc...We will generalise this observation. Given a number $s\in(0,1),$ we say that a number $x\in(0, 1)$ has $s$-representation $x_s$ of rank $n$ if there exists a sequence $(\epsilon_i)_{i=1}^{n-1}$ with $\epsilon_i\in\{0,1\}$ for every $i,$ such that $$
x=x_s:=
\begin{cases}
a_s(x) & \mbox{ if } x<s\\
b_{1-s}(x) &\mbox{ if } x>s\\
\end{cases}
$$
where $$a_s(x):=s^n+\xi_{n}(1-s)(\epsilon_1 s^{n-2} +\epsilon_2s^{n-3}+\cdots +\epsilon_{n-1}),$$ $$b_s(x):=(1-s)( \epsilon_1+\epsilon_2 s +\cdots \epsilon_{n-1}s^{n-2} +s^{n-1}),$$ $\xi_{n}=0$ if $n=1$ and $\xi_{n}=1$ if $n>1.$ We denote by $Q_s$ the set of elements of $(0, 1)$ with $s$-representation of rank $n$ for some $n\in\mathbb{N}.$ The function $G_{r,p}$ takes the value $x=x_p\in Q_p$ at the interval $(a_{r^{-1}}(x),b_{r^{-1}}(x)).$

\begin{lemma}\label{lem_CantorMap_bound}
Let $p\in (0,1), r\in (2,\infty)$ such that $\min\{p,1-p\}r\geq 1.$ Then for every $x\in [0,1]$ $$\left(\frac{x}{r-1}\right)^{-\log_r(p)}\leq G_{r,p}(x)\leq x^{-\log_{r}(p)}.$$
\end{lemma}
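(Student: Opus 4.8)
The plan is to exploit the self-similar functional equation satisfied by $G_{r,p}$ and to manufacture explicit super- and sub-solutions. Write $s:=-\log_r(p)$, so that $r^{s}=1/p$, the upper comparison function is $U_{r,p}(x)=x^{s}$, and the lower one is $V(x):=\left(x/(r-1)\right)^{s}=(r-1)^{-s}x^{s}$; in this notation the hypothesis $pr\geq 1$ reads $s\leq 1$ and $(1-p)r\geq 1$ reads $(1-p)/p\geq 1/(pr)=r^{s-1}$. From the stationarity of $\mu^{(g^r,p)}$ one reads off that $G:=G_{r,p}$ is the unique bounded fixed point of the operator
$$
(T\Phi)(x)=
\begin{cases}
p\,\Phi(rx) & x\in[0,1/r],\\
p & x\in[1/r,(r-1)/r],\\
p+(1-p)\,\Phi(rx-(r-1)) & x\in[(r-1)/r,1],
\end{cases}
$$
which is a contraction of constant $\max\{p,1-p\}<1$ for the sup norm and is monotone, i.e. $\Phi\leq\Psi\Rightarrow T\Phi\leq T\Psi$. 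Hence $T^{n}\Phi\to G$ uniformly for every bounded $\Phi$, and the lemma reduces to the two pointwise inequalities $TU_{r,p}\leq U_{r,p}$ and $TV\geq V$: the first yields $U_{r,p}\geq TU_{r,p}\geq T^{2}U_{r,p}\geq\cdots\to G$, hence $G\leq U_{r,p}$, and the second yields $V\leq G$.

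Both comparisons are immediate on the first two pieces. On $[0,1/r]$ the scaling $U_{r,p}(x)=x^{s}=p\,(rx)^{s}=p\,U_{r,p}(rx)$, and likewise $V(x)=p\,V(rx)$, gives $TU_{r,p}=U_{r,p}$ and $TV=V$ there. On the plateau $[1/r,(r-1)/r]$ we have $TU_{r,p}=TV=p$, while $U_{r,p}(x)=x^{s}\geq(1/r)^{s}=p$ and $V(x)\leq(r-1)^{-s}\left((r-1)/r\right)^{s}=p$, so $TU_{r,p}\leq U_{r,p}$ and $TV\geq V$ hold, with equality at $x=1/r$ and $x=(r-1)/r$ respectively.

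The real content lives on the third piece. Substituting $u:=rx-(r-1)\in[0,1]$ and using $r^{s}=1/p$, the inequality $TU_{r,p}\leq U_{r,p}$ becomes $\phi(u)\geq 0$ and the inequality $TV\geq V$ becomes $\psi(u)\geq 0$, where
$$
\phi(u):=(u+r-1)^{s}-\tfrac{1-p}{p}\,u^{s}-1,\qquad
\psi(u):=(r-1)^{s}+\tfrac{1-p}{p}\,u^{s}-(u+r-1)^{s}.
$$
I would settle these by monotonicity. One has $\phi(1)=0$, $\phi(0)=(r-1)^{s}-1\geq 0$, and the factorisation $\phi'(u)=s\,u^{s-1}\big[\left((u+r-1)/u\right)^{s-1}-\tfrac{1-p}{p}\big]$; since $s\leq 1$ the bracket is increasing in $u$, so $\phi'$ changes sign at most once, from $-$ to $+$. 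The decisive point is $\phi'(1)=s\big(r^{s-1}-\tfrac{1-p}{p}\big)\leq 0$, which is exactly the hypothesis $(1-p)r\geq 1$; hence $\phi'\leq 0$ on all of $[0,1]$, so $\phi$ is decreasing and $\phi\geq\phi(1)=0$. Symmetrically, $\psi(0)=0$, $\psi(1)=(r-1)^{s}-1\geq 0$, the factor $\psi'(u)=s\,u^{s-1}\big[\tfrac{1-p}{p}-\left((u+r-1)/u\right)^{s-1}\big]$ has a single sign change from $+$ to $-$, and $\psi'(1)=s\big(\tfrac{1-p}{p}-r^{s-1}\big)\geq 0$ by the same inequality, so $\psi$ is increasing and $\psi\geq\psi(0)=0$.

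The main obstacle is precisely the sign of the derivative at the right endpoint $u=1$: it is there, and only there, that the hypothesis $(1-p)r\geq 1$ enters, upgrading the single-crossing shape of $\phi$ and $\psi$ into genuine monotonicity on $[0,1]$, while the complementary hypothesis $pr\geq 1$ (that is, $s\leq 1$) is what guarantees that single-crossing shape in the first place. Once $\phi\geq 0$ and $\psi\geq 0$ are in hand, the monotone-iteration argument of the first paragraph closes the proof.
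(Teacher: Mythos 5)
Your proof is correct. It is worth pointing out that the paper offers no proof at all of Lemma \ref{lem_CantorMap_bound}: the bound is simply stated as an extension to general $p$ of results of Gorin and Kukushkin \cite{Kukushkin} on the classical case $p=1/2$, so your argument is a genuinely self-contained derivation rather than a reproduction. The nearest analogue in the paper is the proof of Lemma \ref{lem_MyMap_bound}, which checks the corresponding inequality for $F_{r,p}$ on the single plateau $(1-q+q^2,1-q^2)$ by a convexity/concavity comparison and then appeals to the ``symmetric properties'' of the graph to propagate it; that propagation is precisely the self-similarity that your monotone contraction $T$ makes explicit and rigorous through the scheme $TU_{r,p}\leq U_{r,p}\Rightarrow G\leq U_{r,p}$ and $TV\geq V\Rightarrow V\leq G$ with $T^{n}\Phi\to G$ uniformly. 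Your route also isolates cleanly where the hypotheses enter: $pr\geq 1$ (i.e. $s\leq 1$) makes the bracket in $\phi'$ and $\psi'$ monotone in $u$, and $(1-p)r\geq 1$ fixes its sign at the endpoint $u=1$; note that in your argument both halves of $\min\{p,1-p\}r\geq 1$ are used for each of the two bounds, slightly more than the paper's subsequent remark suggests is strictly necessary, but this is harmless since the lemma assumes both. Two cosmetic points you may wish to tidy: the three case formulas defining $T$ overlap at $x=1/r$ and $x=(r-1)/r$, so a convention (say, half-open intervals) is needed for $T\Phi$ to be well defined on an arbitrary bounded $\Phi$ --- the contraction and monotonicity survive any such choice, and $G$ itself satisfies all the formulas at those points since $G(0)=0$ and $G(1)=1$; and the phrase ``changes sign at most once'' is a detour, since what you actually use is only that the bracket is monotone on $(0,1]$ and has the favorable sign at $u=1$, hence a constant sign throughout.
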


The third lemma gives bounds for the Cantor map $F_{r,p}.$

\begin{lemma}\label{lem_MyMap_bound}
Let $p\in (0,1), r\in (2,\infty)$ such that $\min\{p,1-p\}r\geq 1.$ Then for every $x\in [0,1]$ $$\left(\frac{x}{r-1}\right)^{-\log_r(p)}\leq F_{r,p}(x)\leq x^{-\log_{r}(p)}.$$ Moreover, $G_{r,p}\leq F_{r,p}$ if $p\in(0,1/2),$ $G_{r,p}= F_{r,p}$ if $p =1/2,$ and $G_{r,p}\geq F_{r,p}$ if $p\in(1/2,1).$
\end{lemma}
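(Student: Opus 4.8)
The plan is to treat the two assertions separately, proving the comparison between $G_{r,p}$ and $F_{r,p}$ first and then deducing the two-sided bound from it together with Lemma~\ref{lem_CantorMap_bound}. Throughout I would use three structural facts about $F_{r,p}$. First, the left self-similarity $F_{r,p}(x/r)=pF_{r,p}(x)$ for all $x\in[0,1]$ (the content of the scaling lemma above) together with the value $F_{r,p}\equiv p$ on the gap $[1/r,(r-1)/r]$. Second, since $f^r_2$ reverses orientation, restricting the stationary equation to the right piece gives $F_{r,p}(y)=1-(1-p)F_{r,p}\bigl(r(1-y)\bigr)$ for $y\in[(r-1)/r,1]$. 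Third, the reflection $R(x)=1-x$ satisfies $R\circ f^r_1=f^r_2$ and $R\circ f^r_2=f^r_1$, so $R$ conjugates $f^r$ to itself with the weights interchanged; hence $F_{r,1-p}(x)=1-F_{r,p}(1-x)$, and the analogous identity holds for $G_{r,p}$.

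For the comparison I would argue at the level of the finite approximations. By the mass lemma both $\mu^{(f^r,p)}$ and $\mu^{(g^r,p)}$ assign mass $\prod_i p_{v_i}$ to the level-$n$ cylinder indexed by the word $v$, and by Lemmas~\ref{lem_de_los_K} and~\ref{pres_order} the left-to-right order of these cylinders is $<_{lex}$ for $g^r$ but $\prec$ for $f^r$. Thus $G_{r,p}$ and $F_{r,p}$, evaluated at the right endpoints of the level-$n$ cylinders, are the partial sums of the \emph{same} multiset of masses taken in two different orders. Writing $B_n$ for the $<_{lex}$-partial sums (the $G$-values) and $A_n^{(q)}$ for the $\prec$-partial sums with weight $q$ (so the $F$-values are $A_n:=A_n^{(p)}$), I would prove simultaneously by induction on $n$, for $p\le 1/2$, the two statements $Q_n\colon A_n\ge B_n$ and $P_n\colon A_n^{(1-p)}\ge B_n$. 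The recursion for $\prec$ splits a level into the $1$-words (on which $\prec$ is preserved) and the $2$-words (on which, because $f^r_2$ reverses orientation, $\prec$ is \emph{reversed}); combined with the reflection identity $1-A_{n-1}^{(p)}(2^{n-1}-j)=A_{n-1}^{(1-p)}(j)$ for complementary top-versus-bottom sums, the first half of each statement reduces to the induction hypothesis, while the second half reduces, after using $1-p\ge p$ and $B_{n-1}\le 1$, to the \emph{other} statement at level $n-1$. Letting $n\to\infty$ and using that the staircases are continuous (the measures are non-atomic) and locally constant on the gaps yields $G_{r,p}\le F_{r,p}$ for $p\le1/2$; the reverse inequality for $p\ge1/2$ and the equality $G_{r,1/2}=F_{r,1/2}$ then follow by applying $R$, which swaps $p$ and $1-p$.

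For the bounds, write $U_{r,p}(x)=x^{-\log_r p}$ and $L_{r,p}(x)=\bigl(x/(r-1)\bigr)^{-\log_r p}$, and note that both obey the same left scaling $H(x/r)=pH(x)$ as $F_{r,p}$, that $U_{r,p}\ge p\ge L_{r,p}$ on the gap, and that $L_{r,p}=c\,U_{r,p}$ with $c=(r-1)^{\log_r p}\in(0,1)$. From the comparison and Lemma~\ref{lem_CantorMap_bound} I immediately get the lower bound $L_{r,p}\le G_{r,p}\le F_{r,p}$ when $p\le1/2$ and the upper bound $F_{r,p}\le G_{r,p}\le U_{r,p}$ when $p\ge1/2$. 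For the two remaining bounds I would run a maximum-principle argument: if, say, $F_{r,p}-U_{r,p}$ had a positive maximum, the identity $\bigl(F_{r,p}-U_{r,p}\bigr)(x/r)=p\bigl(F_{r,p}-U_{r,p}\bigr)(x)$ forces that maximum onto $[1/r,1]$; it is nonpositive on the gap, and on the right piece the relation $F_{r,p}(y)=1-(1-p)F_{r,p}(r(1-y))$ together with the already-established lower bound $F_{r,p}\ge L_{r,p}$ reduces the claim to the explicit estimate $1-U_{r,p}(y)\le(1-p)L_{r,p}(r(1-y))$ on $[(r-1)/r,1]$. The symmetric computation, using the upper bound in place of the lower one, handles $L_{r,p}\le F_{r,p}$ for $p\ge1/2$.

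The hard part will be, on the one hand, the bookkeeping in the simultaneous induction $P_n,Q_n$ — in particular keeping track of the order reversal on the $2$-words and of the reflection identity for the complementary partial sums — and, on the other hand, verifying the two explicit inequalities such as $1-U_{r,p}(y)\le(1-p)L_{r,p}(r(1-y))$ for all $y\in[(r-1)/r,1]$ under the hypothesis $\min\{p,1-p\}r\ge1$. These are precisely the places where the condition $pr\ge1$ enters (it forces the exponent $-\log_r p\le1$, so that $U_{r,p}$ is concave) and where the envelopes $U_{r,p}$ and $L_{r,p}$ become tight, touching $F_{r,p}$ along the sequence $x=r^{-n}$.
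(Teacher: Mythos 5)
Your overall architecture (a combinatorial comparison of partial sums for the $G$--$F$ part, then a scaling/maximum-principle argument reducing the remaining envelope bounds to explicit inequalities on the right piece) is quite different from the paper, which instead reduces the whole lemma by self-similarity to a single explicit comparison on the interval $(1-q+q^2,1-q^2)$ and a convexity computation. However, your first part contains a genuine error. The ``third structural fact'' is false: from $R\circ f^r_1=f^r_2$ and $R\circ f^r_2=f^r_1$ it does \emph{not} follow that $R$ conjugates $f^r$ to itself with the weights interchanged. Conjugation requires $R\circ f^r_i\circ R=f^r_{\sigma(i)}$, and in fact $R\circ f^r_1\circ R(x)=1-(1-x)/r=g^r_2(x)\neq f^r_2(x)$; the identities you cite only flip the \emph{first} symbol of the coding. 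Consequently $F_{r,1-p}(x)\neq 1-F_{r,p}(1-x)$ (e.g.\ for $r=3$, $p=1/3$, on the gap $(7/9,8/9)$ one has $F_{3,1/3}=7/9$ while $1-F_{3,2/3}(1-x)=5/9$), and the reflection identity $1-A_{n}^{(p)}(2^{n}-j)=A_{n}^{(1-p)}(j)$ on which your induction step for the $2$-words rests is also false: for $n=2$, $j=1$, the left side is $p(1-p)$ and the right side is $(1-p)^2$. The same false identity is used again when you pass from $p\le 1/2$ to $p\ge 1/2$ ``by applying $R$'': that reflection argument is valid for $G_{r,p}$ (where $R$ genuinely conjugates $g^r$ to itself with swapped weights) but not for $F_{r,p}$.

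The damage is repairable but requires replacing your auxiliary statement $P_n$. Unwinding the recursion for the $2$-block of the $\prec$-order (where the suffix order is reversed), the inequality you actually need at level $n$ is $A_n^{(p)}(2^n-j)\le 1-B_n^{(p)}(j)=B_n^{(1-p)}(2^n-j)$, i.e.\ the statement ``$F_{r,p}\le G_{r,1-p}$ at cylinder endpoints''; the lex-reversal symmetry used here is the one that does hold, namely for $B_n$. A simultaneous induction on the pair $A_n^{(p)}\ge B_n^{(p)}$ and $A_n^{(p)}\le B_n^{(1-p)}$ (for $p\le 1/2$) closes, whereas the pair you propose does not. Your second part (the maximum principle forcing a putative positive maximum of $F_{r,p}-U_{r,p}$ into the right piece, and the reduction to $1-U_{r,p}(y)\le (1-p)L_{r,p}(r(1-y))$) is structurally sound, but note that you leave the two explicit inequalities unverified; since equality holds identically when $pr=1$, these are exactly the sharp estimates and must actually be checked, as the paper does in its own way via the convexity comparison leading to (\ref{num_11_may}).
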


\begin{proof}
We observe that $G_{r,p}= F_{r,p}$ if $p =1/2,$ because the stationary measure does not distinguish $1$'s and $2$'s. Let define $q=r^{-1}.$ By the symmetric properties of the graphs of $F_{r,p}$ and $G_{r,p},$ in order to prove that $G_{r,p}\leq F_{r,p}$ is enough to prove that $G_{r,p} <  F_{r,p} $ in the interval $(1-q+q^2,1-q^2),$ and to prove that $G_{r,p}\geq F_{r,p}$ is enough to prove that $G_{r,p} >  F_{r,p} $ in the interval $(1-q+q^2,1-q^2).$ 
We have that for every $x\in (1-q+q^2,1-q^2)$ $G_{r,p}(x)= 1-(1-p)^2$ and $F_{r,p}(x)= 1-(1-p)p,$ then $G_{r,p}(x)>F_{r,p}(x)$ in the interval $(1-q+q^2,1-q^2)$ iff $0< (2p-1)(p-1)$ iff $p\in(0,1/2).$ This also proves the upper bound for $F_{r,p}$ in the case $p\in(0,1/2), \min\{p,1-p\}r\geq 1,$  and the lower bound for $F_{r,p}$ in the case $p\in(1/2,1),\min\{p,1-p\}r\geq 1.$ We will prove the upper bound in the case $p\in(1/2,1),pr\geq 1,$ the lower bound in the case $p\in(0,1/2),$ $pr\geq 1,$ can be proved similarly. Let $p\in(1/2,1),$ then by the symmetric properties of $F_{r,p}(x)$  and $x^{-\log_{r}(p)}$ it is enough to prove that $F_{r,p}(x)\leq x^{-\log_{r}(p)}$ in the interval $(1-q+q^2,1-q^2).$  In this interval $F_{r,p}(x)= 1-(1-p)p,$ therefore it is enough to prove that  
\begin{equation}\label{num_11_may}
1-p+p^2\leq \inf \{ x^{-\log_{r}(p)}:x\in (1-q+q^2,1-q^2)\}=(1-q+q^2)^{-\log_{1/q}(p)}.
\end{equation}
For each $q\in (0,1/2)$, we can define the functions $f(x):=1-x+x^2$ and $g_q(x):=(1-q+q^2)^{-log(x,1/q)}.$ The function $f$ is convex and the function $g_q$ is concave, $f(1)=g_p(1),$ $f(0)=1$ and $\lim_{x\to 0^+}g_p(x)=0,$ then they intersect at exactly one point $x_q\in (0,1)$ and $f(x)>g_q(x)$ for $x<x_q,$ $f(x)<g_q(x)$ for $x>x_q.$ It is easy to prove that $x_q=q,$ then the inequality (\ref{num_11_may}) is satisfied iff $p\geq q,$ i.e.  $pr\geq 1.$
\end{proof}

\begin{remark}
We observe that in Lemma \ref{lem_CantorMap_bound}, the inequality $(1-p)r\geq 1$ is necessary for the upper bound and the inequality $pr\geq 1$ is necessary for the lower bound. While, in Lemma \ref{lem_MyMap_bound} the inequality $(1-p)r\geq 1$ is necessary for the lower bound and the inequality $pr\geq 1$ is necessary for the upper bound.
\end{remark}

\begin{lemma}
Let $n,k,l\in \mathbb{N}$ and call $p:=\frac{1}{2k+1}\in (0,\frac{1}{3}).$ Then the equation 
\begin{equation}\label{eq:16may}
p^n=\left(1-p\right)^l \sum_{i=1}^m\sum_{(a,b)\in A_i}p^{a}(1-p)^b
\end{equation}
does not have solution for $m\in \mathbb{N}$ and $A_i\subset \mathbb{N}_0\times \mathbb{N}_0$ with $A_i$ a finite set for $i\in\{1,\ldots,m\}.$
\end{lemma}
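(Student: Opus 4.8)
The plan is to clear denominators and reduce everything to a parity statement. Write $N:=2k+1$, so that $p=\tfrac1N$ and $1-p=\tfrac{2k}{N}$; the decisive feature of this family is that the numerator $2k$ of $1-p$ is even while $N$ itself is odd.

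First I would put the right-hand side of (\ref{eq:16may}) over a common power of $N$. Since $p^a(1-p)^b=(2k)^b/N^{a+b}$, the equation reads
\[
N^{-n}=(2k)^l\sum_{i=1}^m\sum_{(a,b)\in A_i}\frac{(2k)^b}{N^{\,l+a+b}}.
\]
Setting $D:=n+l+\max\{a+b:(a,b)\in\bigcup_{i}A_i\}$ and multiplying both sides by $N^{D}$ produces an identity between nonnegative integers, because all the exponents $D-n$ and $D-l-a-b$ are then $\geq 0$:
\[
N^{\,D-n}=(2k)^l\sum_{i=1}^m\sum_{(a,b)\in A_i}(2k)^b\,N^{\,D-l-a-b}.
\]

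Next I would compare parities. The left-hand side $N^{D-n}=(2k+1)^{D-n}$ is a power of an odd number, hence odd. On the right-hand side each term $(2k)^b N^{D-l-a-b}$ is a nonnegative integer, and the sets $A_i$ cannot all be empty (otherwise the right-hand side vanishes while $p^n>0$), so the double sum is a positive integer; multiplying it by $(2k)^l$, which is divisible by $2$ since $l\geq 1$ and $2k$ is even, shows the right-hand side is a positive even integer. An odd integer cannot equal an even one, so (\ref{eq:16may}) has no solution.

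The argument is essentially forced, and I do not expect a genuine obstacle: the only points needing care are choosing $D$ large enough that both sides become integers, and noting that the right-hand sum is nonvanishing so that we really compare an odd integer with a positive even one. There is no cancellation issue because the factor $(2k)^l$ is extracted from the entire right-hand side before any summation. Equivalently, one may phrase the whole thing $2$-adically: $v_2(p^n)=0$ because $N$ is odd, whereas $v_2\big((1-p)^l\sum p^a(1-p)^b\big)\geq l\,v_2(2k)\geq l\geq 1$, since every term of the sum has nonnegative $2$-adic valuation. The hypothesis $p=\tfrac1{2k+1}$ is used only through the evenness of $2k$; for a general $p$ this parity obstruction disappears, which is presumably why the lemma is stated for this specific family.
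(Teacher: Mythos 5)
Your proof is correct and is essentially identical to the paper's: the author also multiplies both sides by $(2k+1)^{n+l+r}$ with $r=\max_{i}\max_{(a,b)\in A_i}(a+b)$, turning the identity into one between integers, and derives the contradiction that an odd number $(2k+1)^{l+r}$ would equal an even number carrying the factor $(2k)^l$. Your explicit remarks about the degenerate case of empty $A_i$ and the $2$-adic rephrasing are harmless additions, but the core argument is the same.
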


\begin{proof}
Let $n,k,l,p$ as in the statement of the lemma. Suppose the equation (\ref{eq:16may}) has solution for some $m\in \mathbb{N}$ and $A_i\subset \mathbb{N}_0\times \mathbb{N}_0$ with $A_i$ a finite set for $i\in\{1,\ldots,m\}.$ Define $$r:=\max_{i\in\{1,\ldots,m\}}\max_{(a,b)\in A_i} a+b.$$ It is well defined as all $A_i$ are finite. Then multiplying at both sides of the equation (\ref{eq:16may}) by $(2k+1)^{n+l+r}$ we obtain
$$
(2k+1)^{l+r}=\left(2k\right)^l \sum_{i=1}^m\sum_{(a,b)\in A_i} (2k+1)^{n+r-(a+b)}(2k)^{b},
$$
this is a contradiction, because the left part of the equation is an odd number while the right part is even.
\end{proof}

A direct consequence

\begin{lemma}
Let $p:=\frac{1}{2k+1}$ for $k\in\mathbb{N}.$ Then the equation 
\begin{equation}\label{eq:16may}
p^n=\sum_{i=1}^m\sum_{(a,b)\in A_i}p^{a}(1-p)^b
\end{equation}
does not have solution for $n\in\mathbb{N},$ $m\in \mathbb{N}$ and $A_i\subset  \mathbb{N}\times \mathbb{N}$ with $A_i$ a finite set for $i\in\{1,\ldots,m\}.$
\end{lemma}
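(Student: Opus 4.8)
The plan is to deduce this statement directly from the previous lemma by a single reindexing, rather than repeating the parity argument. The crucial point is a constraint that was absent before: here every index set satisfies $A_i\subset\mathbb{N}\times\mathbb{N}$, so each pair $(a,b)$ occurring in the double sum has $b\geq 1$. Consequently every monomial $p^{a}(1-p)^{b}$ carries at least one factor of $(1-p)$, which can be extracted from the entire right-hand side.

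Concretely, I would argue by contradiction. Suppose the equation $p^{n}=\sum_{i=1}^{m}\sum_{(a,b)\in A_i}p^{a}(1-p)^{b}$ has a solution for some $n,m\in\mathbb{N}$ and finite sets $A_i\subset\mathbb{N}\times\mathbb{N}$. For each $i$ define the shifted set $A_i':=\{(a,b-1):(a,b)\in A_i\}$. Because $b\geq 1$ for every $(a,b)\in A_i$, we have $A_i'\subset\mathbb{N}\times\mathbb{N}_0\subset\mathbb{N}_0\times\mathbb{N}_0$; and since the map $(a,b)\mapsto(a,b-1)$ is injective, each $A_i'$ is again a finite set of the same cardinality as $A_i$. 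Pulling a single factor $(1-p)$ out of every term rewrites the assumed identity as
$$
p^{n}=(1-p)^{1}\sum_{i=1}^{m}\sum_{(a,b)\in A_i'}p^{a}(1-p)^{b}.
$$

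This is exactly the equation treated in the previous lemma, specialised to $l=1$, and that lemma asserts it has no solution when $p=\tfrac{1}{2k+1}$. The contradiction proves the claim. I do not expect any substantive obstacle here: the whole argument reduces to one algebraic manipulation feeding into the already-established odd-versus-even parity obstruction, and the only thing requiring verification is that the reindexing preserves finiteness and lands inside $\mathbb{N}_0\times\mathbb{N}_0$, which is immediate from $b\geq 1$ and the injectivity of the shift.
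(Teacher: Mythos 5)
Your proposal is correct and is essentially the derivation the paper intends: the lemma is stated there as a ``direct consequence'' of the preceding one, and the only natural way to obtain it is exactly your observation that $A_i\subset\mathbb{N}\times\mathbb{N}$ forces $b\geq 1$ in every term, so a single factor of $(1-p)$ can be extracted and the previous lemma applied with $l=1$. The reindexing $(a,b)\mapsto(a,b-1)$ plainly preserves finiteness and lands in $\mathbb{N}_0\times\mathbb{N}_0$, so there is no gap.
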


The proof of these lemmas is essentially the same than the one for the following that we will use

\begin{lemma}\label{lem_needed}
Let $n,k,l\in \mathbb{N}$ and call $p:=\frac{1}{2k+1}\in (0,\frac{1}{3}).$ Then the equation 
\begin{equation}\label{eq:16may_second}
p^n=(1-p)^n+\left(1-p\right)^l \sum_{i=1}^m\sum_{(a,b)\in A_i}(p^{a}(1-p)^b-p^{b}(1-p)^a)
\end{equation}
does not have solution for $m\in \mathbb{N}$ and $A_i\subset \mathbb{N}_0\times \mathbb{N}_0$ with $A_i$ a finite set for $i\in\{1,\ldots,m\}.$
\end{lemma}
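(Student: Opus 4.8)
The plan is to reproduce the parity argument that settled the two preceding lemmas, checking only that the new term $(1-p)^n$ does not disturb it. First I would substitute $p=\frac{1}{2k+1}$ and $1-p=\frac{2k}{2k+1}$ throughout (\ref{eq:16may_second}), so that every term becomes a rational number whose denominator is a power of $2k+1$. As before, set $r:=\max_{i\in\{1,\ldots,m\}}\max_{(a,b)\in A_i}(a+b)$; this is finite because each $A_i$ is finite. Assuming a solution exists, I would multiply both sides of the identity by $(2k+1)^{n+l+r}$ in order to clear all denominators.

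The second step is to read off the resulting integer identity. The left-hand side becomes $(2k+1)^{l+r}$. The first term on the right, coming from $(1-p)^n$, becomes $(2k)^n(2k+1)^{l+r}$. The summand contributes
$$(2k)^l\sum_{i=1}^m\sum_{(a,b)\in A_i}\bigl((2k)^b-(2k)^a\bigr)(2k+1)^{n+r-(a+b)}.$$
Here I would note that $n+r-(a+b)\geq n\geq 1>0$ for every $(a,b)$, since $a+b\leq r$ by the definition of $r$; hence all exponents are nonnegative and the whole right-hand side is a genuine integer.

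The final step is the parity contradiction. The left-hand side $(2k+1)^{l+r}$ is odd. On the right, the term $(2k)^n(2k+1)^{l+r}$ is even because $n\geq 1$ forces $(2k)^n$ to be even, and the summand carries the factor $(2k)^l$, which is even because $l\geq 1$. Thus the right-hand side is even, incompatible with the left-hand side being odd, and no solution can exist. The only point needing care—and the sole difference from the earlier lemmas—is the new term $(1-p)^n$; but since it too becomes even after clearing denominators, it leaves the parity balance untouched, so I do not anticipate any genuine obstacle beyond the bookkeeping of the exponents.
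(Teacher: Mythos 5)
Your proof is correct and follows essentially the same route as the paper: clear denominators by multiplying through by $(2k+1)^{n+l+r}$ and derive the odd-versus-even contradiction. The only addition is your explicit check that the exponents $n+r-(a+b)$ are nonnegative, which the paper leaves implicit.
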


\begin{proof}
Let $n,k,l,p$ as in the statement of the lemma. Suppose the equation (\ref{eq:16may_second}) has solution for some $m\in \mathbb{N}$ and $A_i\subset \mathbb{N}_0\times \mathbb{N}_0$ with $A_i$ a finite set for $i\in\{1,\ldots,m\}.$ Define $$r:=\max_{i\in\{1,\ldots,m\}}\max_{(a,b)\in A_i} a+b$$ and multiplying at both sides of the equation (\ref{eq:16may_second}) by $(2k+1)^{n+l+r}$ we obtain
$$
(2k+1)^{l+r}=\left(2k\right)^{n}\left(2k+1\right)^{l+r} +\left(2k\right)^l \sum_{i=1}^m\sum_{(a,b)\in A_i} (2k+1)^{n+r-(a+b)}((2k)^{b}-(2k)^{a}),
$$
this is a contradiction, because the left part of the equation is an odd number while the right part is even.
\end{proof} 

A direct consequence

\begin{lemma}
Let $p:=\frac{1}{2k+1}$ for $k\in\mathbb{N}.$ Then the equation 
\begin{equation}\label{eq:16may}
p^n=\sum_{i=1}^m\sum_{(a,b)\in A_i}(p^{a}(1-p)^b-p^{b}(1-p)^a)
\end{equation}
does not have solution for $n\in\mathbb{N},$ $m\in \mathbb{N}$ and $A_i\subset  \mathbb{N}\times \mathbb{N}$ with $A_i$ a finite set for $i\in\{1,\ldots,m\}.$
\end{lemma}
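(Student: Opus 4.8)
The plan is to argue exactly as in the proof of Lemma \ref{lem_needed}: assume a solution exists, clear denominators, and derive a parity contradiction of the form ``odd $=$ even''. The only thing that changes is the source of the even-ness of the right-hand side, and this is precisely where the hypothesis $A_i \subset \mathbb{N}\times\mathbb{N}$ enters. First I would dispose of the trivial case in which all the $A_i$ are empty (then the right-hand side is $0$ while $p^n>0$), so that the finite quantity $r := \max_{i}\max_{(a,b)\in A_i}(a+b)$ is well defined; by construction every pair appearing satisfies $a+b\le r$ together with $a,b\ge 1$.

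Next, writing $p = 1/(2k+1)$ and $1-p = 2k/(2k+1)$, I would multiply both sides of the putative identity by $(2k+1)^{n+r}$. On the left this yields $(2k+1)^{r}$, an odd integer. On the right, each summand is $p^a(1-p)^b - p^b(1-p)^a = \big((2k)^b - (2k)^a\big)(2k+1)^{-(a+b)}$, so after the multiplication it becomes $\big((2k)^b - (2k)^a\big)(2k+1)^{\,n+r-(a+b)}$; since $a+b\le r$ and $n\ge 1$, the exponent $n+r-(a+b)$ is a nonnegative integer, and hence each summand is a genuine integer. Summing, the right-hand side is an integer equal to the odd number $(2k+1)^r$.

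The crux, and the only place the hypothesis is used, is the \emph{parity} of the right-hand side. Because $a,b\ge 1$, both $(2k)^a$ and $(2k)^b$ are even, so their difference $(2k)^b-(2k)^a$ is even, whence every summand is even and the whole right-hand side is even. This contradicts the odd left-hand side, finishing the proof. I expect this parity step to be the decisive point: it is exactly the restriction to $\mathbb{N}\times\mathbb{N}$ rather than $\mathbb{N}_0\times\mathbb{N}_0$ that forces $(2k)^b-(2k)^a$ to be even, and it is what lets us dispense with the factors $(1-p)^l$ and the term $(1-p)^n$ present in Lemma \ref{lem_needed}. Equivalently, one can view the statement as a direct consequence of Lemma \ref{lem_needed}: since $a,b\ge 1$ one may factor $p(1-p)$ out of each antisymmetric term, using $p^a(1-p)^b - p^b(1-p)^a = p(1-p)\big(p^{a-1}(1-p)^{b-1} - p^{b-1}(1-p)^{a-1}\big)$, and then divide by $p$; this shifts the exponents back into $\mathbb{N}_0\times\mathbb{N}_0$ and brings the identity into the shape handled by the very same integrality-and-parity computation.
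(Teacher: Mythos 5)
Your argument is correct and is essentially the argument the paper intends: the paper states this lemma as a direct consequence of Lemma \ref{lem_needed}, whose proof is exactly your clear-the-denominators-by-$(2k+1)^{n+r}$ and odd-equals-even parity computation, with the hypothesis $A_i\subset\mathbb{N}\times\mathbb{N}$ supplying the evenness of $(2k)^b-(2k)^a$ in place of the explicit $(1-p)^l$ factor. Your handling of the degenerate all-empty case and the check that the exponents $n+r-(a+b)$ are nonnegative are small but welcome additions the paper omits.
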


Given an element $w\in\Sigma_2^*$ we define $\#1(w)$ equal to the numbers of $1$ that appears in $w$ and $\#2(w)$ the numbers of $2$ that appears in $w.$ Given $p\in (0,1),w\in\Sigma_2^*$ we define $p_w=p^{\#1(w)}(1-p)^{\#2(w)}.$ It is easy from the definition of the function $F_{r,p}$ to observe that for $w\in \Sigma_2^n$ 
$$
F_{r,p}(\sup f_w[0,1])=\sum_{v\in \Sigma_2^n: v\prec w} p_v.
$$

Moreover, recalling the definition of $K_n=(x_1^n,x_2^n,\cdots,x_{2^n}^n)$ in Lemma \ref{lem_de_los_K}, we have a characterisation given by the following lemma

\begin{lemma}
Let $p\in(0,1),n\in \mathbb{N}$ and $i\in \{1,\ldots,2^n\}.$ Then
$$F_{r,p}(\sup f_{x_i^n}[0,1])=\sum_{j=1,\ldots,i} p_{x_j^n}.$$
\end{lemma}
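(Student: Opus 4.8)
The plan is to read off the value of $F_{r,p}$ at $\sup f^r_{x_i^n}[0,1]$ directly from the geometric arrangement of the $2^n$ cylinder intervals $f^r_w[0,1]$, $w\in\Sigma_2^n$, exploiting that $K_n$ lists $\Sigma_2^n$ precisely in $\prec$-order and that $\prec$ encodes left-to-right position on $[0,1]$. In this way the statement becomes a bookkeeping reformulation of the displayed identity $F_{r,p}(\sup f^r_w[0,1])=\sum_{v\in\Sigma_2^n:\,v\prec w}p_v$ combined with Lemma \ref{lem_de_los_K}, and no new estimates are required.

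Concretely, I would proceed as follows. First, by Lemma \ref{lem_de_los_K}, for words of common length $n$ the chain $x_1^n\prec x_2^n\prec\cdots\prec x_{2^n}^n$ is exactly the $\prec$-increasing enumeration of $\Sigma_2^n$; in particular $\{v\in\Sigma_2^n: v\prec x_i^n\}=\{x_1^n,\ldots,x_i^n\}$. Second, by Lemma \ref{pres_order}, since no two distinct words of the same length are prefixes of one another, $v\prec w$ forces $\sup f^r_v[0,1]<\inf f^r_w[0,1]$; hence the intervals $f^r_{x_1^n}[0,1],\ldots,f^r_{x_{2^n}^n}[0,1]$ are pairwise disjoint and occur from left to right in this order. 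Third, by the lemma computing $\mu^{(f^r,p)}(f^r_v[0,1])=\prod_{i=1}^{|v|}p_{v_i}=p_v$, each such interval carries mass $p_{x_j^n}$, and since the attractor is contained in $\bigcup_{w\in\Sigma_2^n}f^r_w[0,1]$ while $\sum_{w\in\Sigma_2^n}p_w=1$, no mass lies outside this union.

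Putting these together, I set $t:=\sup f^r_{x_i^n}[0,1]$. The interval $f^r_{x_i^n}[0,1]$ is contained in $[0,t]$ because $t$ is its right endpoint, every earlier interval $f^r_{x_j^n}[0,1]$ with $j<i$ lies entirely to the left of $t$, and every later interval $f^r_{x_j^n}[0,1]$ with $j>i$ satisfies $\inf f^r_{x_j^n}[0,1]>t$ by the second step, so it contributes nothing to $\mu^{(f^r,p)}[0,t]$. Therefore
$$F_{r,p}(t)=\mu^{(f^r,p)}[0,t]=\sum_{j=1}^i \mu^{(f^r,p)}\bigl(f^r_{x_j^n}[0,1]\bigr)=\sum_{j=1}^i p_{x_j^n},$$
which is the claim.

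The only delicate point is the endpoint accounting in this evaluation: one must confirm that the $i$-th interval contributes its full mass $p_{x_i^n}$, which holds because $t$ is its (closed) right endpoint and Lemma \ref{pres_order} produces a \emph{strict} gap before the $(i+1)$-st interval, so none of the later mass leaks into $[0,t]$. This is exactly why the upper summation index is $i$ and not $i-1$; equivalently, if one prefers to quote the displayed identity $F_{r,p}(\sup f^r_w[0,1])=\sum_{v\prec w}p_v$ directly, one must read $\prec$ there reflexively, so that $w$, being a prefix of itself, is included, in order to match the enumeration of Lemma \ref{lem_de_los_K}. Beyond this order-theoretic bookkeeping the argument is a direct concatenation of the three cited facts.
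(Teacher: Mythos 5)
Your proof is correct and follows exactly the route the paper intends: the paper states this lemma without proof, as an immediate combination of the preceding display $F_{r,p}(\sup f^r_w[0,1])=\sum_{v\prec w}p_v$ with the $\prec$-enumeration of $\Sigma_2^n$ from Lemma \ref{lem_de_los_K}, and you have simply supplied the routine details (disjointness and left-to-right ordering of the level-$n$ cylinders via Lemma \ref{pres_order}, the mass of each cylinder, and the endpoint accounting). Your observation that $\prec$ as defined is reflexive, so that $w$ itself is included in the sum and the upper index is $i$ rather than $i-1$, is a correct and worthwhile clarification of the paper's notation.
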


This lemma implies that if $F_{r,p}$ and $F_{r,1-p}$ coincide at certain point, then necessarily there must exists $n\in\N$ and $i\in\{1,\ldots,2^n\}$ such that
\begin{equation}\label{eq_goal_today}
\sum_{j=1,\ldots,i} p_{x_j^n}=\sum_{j=1,\ldots,i} (1-p)_{x_j^n}.
\end{equation}

We will prove that this equation does not have solution under certain conditions, before we obtain a relationship between the sets $K_n=(x_1^n,x_2^n,\cdots,x_{2^n}^n)$ with the numbers $p_{x_i^n}.$ 

\begin{lemma}\label{lem_induction}
For any $n\in\mathbb{N}\setminus\{1,2\},$ we have the following:
\begin{enumerate}
\item $\{p_{x_{1}^n},\ldots,p_{x^n_{2^{n-1}-2^{n-3}}}\}=\{(1-p)_{x_{2^{n-1}-2^{n-3}+1}^n},\ldots, (1-p)_{x_{2^{n}-2^{n-2}}^n}\}.$
\item $\{(1-p)_{x_{1}^n},\ldots,(1-p)_{x^n_{2^{n-1}-2^{n-3}}}\}=\{p_{x_{2^{n-1}-2^{n-3}+1}^n},\ldots, p_{x_{2^{n}-2^{n-2}}^n}\}.$
\item $p_{x_{1}}^n=p^n.$
\item $p_{x_{s_n}}^n=(1-p)^n$ for a unique $s_n\in \{2^{n-1}-2^{n-3}+1,\ldots, 2^{n}-2^{n-2}\}.$
\item $p_{x_{j}}^n=p^a(1-p)^b$ with $a,b>0$ for every $i\in\{1,2^{n}-2^{n-2}\}\setminus\{1,s_n\}.$
\item $(1-p)_{x_{j}}^n=p^a(1-p)^b$ with $a,b>0$ for every $i\in\{1,2^{n}-2^{n-2}\}\setminus\{1,s_n\}.$
\item \label{lem_penultimo} $p_{x_{2^{n}-2^{n-2}}^n}=(1-p)^3 p^{n-3}.$
\item \label{lem_ultimo} $n\in\mathbb{N}\setminus\{1,2,3\},$ $p_{x_{i}^n}=(1-p)^3 p^{a}(1-p)^{b}$ with $n-3>a\geq 0$ and  $n-3 \geq b> 0$ for every $i\in\{s_n,2^{n}-2^{n-2}-1\}.$
\end{enumerate}

It is not hard to prove each part of the lemma by induction in $n$. We provide a proof for $\ref{lem_penultimo}$ and $\ref{lem_ultimo}.$

\begin{proof}
First, we prove \ref{lem_penultimo}. In order to prove that $p_{x_{2^{n}-2^{n-2}}^n}=(1-p)^3 p^{n-3}$ we use induction in $n$ to prove that $\#1(x_{2^{n}-2^{n-2}}^n)=n-3$ and $\#2(x_{2^{n}-2^{n-2}}^n)=3.$ For the base case $n=3$ we have that $x_{2^{3}-2^{3-2}}^3=x_{6}^3=222,$ therefore $\#1(222)=0$ and $\#2(222)=3.$ For $x_{2^{n+1}-2^{n+1-2}}^{n+1}$ we have by definition of $K_{n+1}$ that $\#1(x_{2^{n+1}-2^{n+1-2}}^{n+1})=\#1(x_{2^{n}-2^{n-2}}^{n})+1$ and $\#2(x_{2^{n+1}-2^{n+1-2}}^{n+1})=\#2(x_{2^{n}-2^{n-2}}^{n}),$ because $2^{n+1}-2^{n+1-2}$ is even. Then, by inductive hypothesis we have that $\#1(x_{2^{n+1}-2^{n+1-2}}^{n+1})=n-3+1$ and $\#2(x_{2^{n+1}-2^{n+1-2}}^{n+1})=n-3,$ which concludes the proof.

Second, we prove \ref{lem_ultimo}.  We use induction in $n$ to prove that $\#1(x_{i}^n)< n-3$ for every $i\in\{s,2^{n}-2^{n-2}-1\}.$ For the base case $n=4$ we have that $s_n=11$ and $2^{4}-2^{4-2}-1=11,$ in this case $x_{11}^4=2222,$ then $\#1(2222)=0<4-3=1.$ For the case $i\in\{s_{n+1},2^{n+1}-2^{n+1-2}-1\}$ we have by construction of $K_{n+1}$ that $\#1(x_{i}^{n+1})\in \{\#1(x_{j}^n), \#1(x_{j}^n)+1\}$ for $j\in\{s_n,2^{n}-2^{n-2}-1\}.$ By inductive hypothesis que have that $\#1(x_{j}^n)<n-3$ for every $j\in\{s_n,2^{n}-2^{n-2}-1\},$ then $\#1(x_{i}^{n+1})<n+1-3,$ which concludes the proof.
\end{proof}

\end{lemma}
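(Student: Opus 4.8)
The plan is to prove all eight parts simultaneously by induction on $n$, exploiting the recursive doubling rule that builds $K_{n+1}$ from $K_n$. The starting point is the observation that $K_n$ enumerates each word of $\Sigma_2^n$ exactly once: this follows by induction, since each $x_i^n$ spawns the two distinct children $x_i^n1$ and $x_i^n2$, and distinct parents give disjoint pairs. Writing $M_n:=2^n-2^{n-2}=3\cdot 2^{n-2}$ and $H_n:=2^{n-1}-2^{n-3}=M_n/2$, I note that $M_n$ is even for $n\geq 3$, so $H_n$ is a genuine integer and the two index ranges in parts (i)--(ii) are exactly the two halves of $\{1,\ldots,M_n\}$. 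Since position $i$ of $K_n$ spawns positions $2i-1,2i$ of $K_{n+1}$, the first half $\{1,\ldots,H_n\}$ spawns $\{1,\ldots,M_n\}$ and the second half $\{H_n+1,\ldots,M_n\}$ spawns $\{M_n+1,\ldots,M_{n+1}\}$; this is the combinatorial backbone of the induction.

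For parts (i) and (ii) the key is to work at the level of the multiset of compositions. Encode each word $w$ by the pair $(\#1(w),\#2(w))$ and let $\mathcal{C}_1^{(n)}$ and $\mathcal{C}_2^{(n)}$ denote the multisets of such pairs coming from positions $\{1,\ldots,H_n\}$ and $\{H_n+1,\ldots,M_n\}$ respectively. Because $(1-p)_w=p_{\overline w}$, where $\overline w$ interchanges $1$ and $2$, parts (i) and (ii) are together equivalent to the single multiset identity $\mathcal{C}_1^{(n)}=\overline{\mathcal{C}_2^{(n)}}$, the bar denoting the coordinate swap $(a,b)\mapsto(b,a)$ applied elementwise. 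I would verify this identity by induction: the child operation sends a composition $(a,b)$ to the pair $\{(a+1,b),(a,b+1)\}$, and one checks directly that complementing the children of $(a,b)$ produces exactly the children of $(b,a)$. Combining this with the backbone above turns the inductive step into a one-line multiset computation $\overline{\mathcal{C}_2^{(n+1)}}=\mathcal{C}_1^{(n+1)}$, using the inductive hypothesis $\overline{\mathcal{C}_2^{(n)}}=\mathcal{C}_1^{(n)}$. The base case $n=3$ is checked by hand from $K_3=(111,112,122,121,221,222,212,211)$.

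The remaining parts fall out quickly. Part (iii) is immediate since the recursion gives $x_1^{n+1}=x_1^n1$ (index $1$ is odd), hence $x_1^n=1^n$ and $p_{x_1^n}=p^n$. For part (iv), uniqueness of $s_n$ is just bijectivity of the enumeration, while its location in the second half follows because $2^n$ is the child of $2^{n-1}$ obtained by appending a $2$: if $2^{n-1}=x_{s_{n-1}}^{n-1}$ lies in $\{H_{n-1}+1,\ldots,M_{n-1}\}$, then its children lie in $\{M_{n-1}+1,\ldots,M_n\}=\{H_n+1,\ldots,M_n\}$ (here $M_{n-1}=H_n$ and $2M_{n-1}=M_n$), so $s_n$ stays in the second half; the base case is $s_3=6$. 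Parts (v) and (vi) are then corollaries: the only words containing a single symbol are $1^n$ and $2^n$, sitting at positions $1$ and $s_n$, so every other word in $\{1,\ldots,M_n\}$ contains both symbols, forcing $a,b>0$. Finally, parts (vii) and (viii) are as in the proof given below, tracking $\#1(x_{M_n}^n)=n-3$, $\#2(x_{M_n}^n)=3$ and $\#1(x_i^n)<n-3$ on $\{s_n,\ldots,M_n-1\}$ by the same kind of induction.

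I expect the only real obstacle to be parts (i)--(ii): the naive guess that positions $i$ and $M_n+1-i$ carry complementary compositions is already false at $n=4$ (for instance position $1$ has composition $(4,0)$ while position $12$ has $(1,3)$), so a pointwise reflection argument cannot work, and one is forced into the multiset formulation above. Getting the bookkeeping of the halves right, together with the parity fact that $M_n$ is even for $n\geq 3$ which is what legitimizes the halving, is the delicate point; once the child/complement compatibility $\overline{\{(a+1,b),(a,b+1)\}}=\{(b+1,a),(b,a+1)\}$ is isolated, the induction closes cleanly.
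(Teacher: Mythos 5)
Your proposal is correct. On the only two parts for which the paper actually writes a proof, (vii) and (viii), you follow essentially the same route: an induction on $n$ tracking $\#1(x^n_{2^n-2^{n-2}})=n-3,$ $\#2(x^n_{2^n-2^{n-2}})=3,$ and $\#1(x^n_i)<n-3$ on the block $\{s_n,\ldots,2^n-2^{n-2}-1\}.$ The added value of your write-up is in parts (i)--(vi), which the paper dismisses as routine: for (i)--(ii) your reduction to the multiset identity $\mathcal{C}_1^{(n)}=\overline{\mathcal{C}_2^{(n)}}$ of compositions $(\#1(w),\#2(w))$ is exactly the right formulation, and the two ingredients that close the induction are correctly isolated, namely that position $i$ of $K_n$ spawns positions $2i-1,2i$ of $K_{n+1}$ together with $2H_n=M_n=H_{n+1}$ (so the halves propagate to halves), and that complementation commutes with the child operation, $\overline{\{(a+1,b),(a,b+1)\}}=\{(b+1,a),(b,a+1)\}.$ Because the child multiset is independent of the order in which the two children are listed, the parity rule in the definition of $K_{n+1}$ drops out entirely at the multiset level, which is why this works so cleanly; your check that the pointwise reflection $i\leftrightarrow M_n+1-i$ already fails at $n=4$ correctly explains why the multiset formulation is forced. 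Your arguments for (iii)--(vi) (the leftmost child chain gives $1^n$ at position $1$; the position of $2^n$ stays in the second half because $2M_{n-1}=M_n$; every other word uses both symbols) match what the paper must have intended.

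One small point worth tightening, which is equally elided in the paper's own proof of (viii): in the inductive step the parent of position $M_{n+1}-1=2M_n-1$ is position $M_n$ itself, which lies outside the range $\{s_n,\ldots,M_n-1\}$ covered by the inductive hypothesis. One must add that since $M_n=3\cdot 2^{n-2}$ is even, the child at position $2M_n-1$ is $x^n_{M_n}2,$ so $\#1$ does not increase there and part (vii) gives $\#1=n-3<(n+1)-3.$ This is a shared patch rather than a defect specific to your argument.
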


\begin{lemma}\label{lema_global}
Let $p=1/q$ for $q$ an odd positive integer. Then the equation (\ref{eq_goal_today}) does have solution for $n\in\mathbb{N}\setminus\{1\}$ and $i\in\{1,\ldots,2^n-2^{n-2}-1\}.$ 
\end{lemma}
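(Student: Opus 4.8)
The plan is to read equation (\ref{eq_goal_today}), at a fixed level $n$ and index $i$, as the single scalar identity
\[
S_i^n:=\sum_{j=1}^{i}\left(p_{x_j^n}-(1-p)_{x_j^n}\right)=0,
\]
in which each summand is the antisymmetric quantity $p_{x_j^n}-(1-p)_{x_j^n}=p^{\#1(x_j^n)}(1-p)^{\#2(x_j^n)}-(1-p)^{\#1(x_j^n)}p^{\#2(x_j^n)}$. A solution is thus an index $i$ at which the cumulative $p$-mass and the cumulative $(1-p)$-mass of the first $i$ cylinders, ordered by $\prec$, coincide; equivalently, a point at which the Cantor maps $F_{r,p}$ and $F_{r,1-p}$ agree. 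First I would produce such an index explicitly.

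The candidate is $i^\star:=2^n-2^{n-2}$. Set $A=\{1,\dots,2^{n-1}-2^{n-3}\}$ and $B=\{2^{n-1}-2^{n-3}+1,\dots,2^n-2^{n-2}\}$, so that $\{1,\dots,i^\star\}=A\cup B$. Parts (i) and (ii) of Lemma \ref{lem_induction} assert, as identities of multisets, that $\{p_{x_j^n}:j\in A\}=\{(1-p)_{x_j^n}:j\in B\}$ and $\{(1-p)_{x_j^n}:j\in A\}=\{p_{x_j^n}:j\in B\}$; passing to sums, $\sum_{j\in A}p_{x_j^n}=\sum_{j\in B}(1-p)_{x_j^n}$ and $\sum_{j\in A}(1-p)_{x_j^n}=\sum_{j\in B}p_{x_j^n}$. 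Hence both $\sum_{j\le i^\star}p_{x_j^n}$ and $\sum_{j\le i^\star}(1-p)_{x_j^n}$ equal $\sum_{j\in A}\big(p_{x_j^n}+(1-p)_{x_j^n}\big)$, so $S_{i^\star}^n=0$ and $i^\star$ solves (\ref{eq_goal_today}). This argument is valid for every $n\ge 3$; the remaining case $n=2$ (where $i^\star=3$ and $K_2=(11,12,22,21)$) follows from the one-line check $p^2+p(1-p)+(1-p)^2=(1-p)^2+(1-p)p+p^2$. In particular the equation does have a solution for every $n\in\mathbb{N}\setminus\{1\}$.

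Finally, to match the precise range in the statement I would show that $i^\star$ is the \emph{smallest} solving index, so that the band $\{1,\dots,2^n-2^{n-2}-1\}$ sits strictly below it. For $i<i^\star$ I plan to invoke parts (iii)--(viii) of Lemma \ref{lem_induction}: these isolate the two extreme terms $p^n$ and $(1-p)^n$ (parts (iii),(iv)) and write every remaining contribution, after factoring a common power $(1-p)^l$, as an antisymmetric pair $p^a(1-p)^b-p^b(1-p)^a$ with $a,b>0$ (parts (v)--(viii)). The vanishing $S_i^n=0$ then acquires exactly the normal form of (\ref{eq:16may_second}), which Lemma \ref{lem_needed} forbids for $p=\tfrac1{2k+1}$. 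I expect the real obstacle to lie in the combinatorial bookkeeping: tracking the recursion $K_n\mapsto K_{n+1}$ of Lemma \ref{lem_de_los_K}, one must verify that each truncated block $\{x_j^n:j\le i\}$ carries a single copy of the all-ones and all-twos words and pairs off all the others symmetrically, so that the reduction to (\ref{eq:16may_second}) is exact and the odd/even parity argument of Lemma \ref{lem_needed} transfers without change.
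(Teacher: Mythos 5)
You read the statement correctly: despite the literal wording ``does have solution,'' the paper's proof (and the way Lemma \ref{lema_global} feeds into Lemma \ref{lem_bound}) makes clear the intended assertion is that (\ref{eq_goal_today}) has \emph{no} solution for $i\in\{1,\ldots,2^n-2^{n-2}-1\}$, and your reformulation --- that $i^\star=2^n-2^{n-2}$ is the least solving index --- is exactly the right reading. Your existence argument at $i^\star$, via the multiset identities of parts (i) and (ii) of Lemma \ref{lem_induction} together with the hand check at $n=2$, is correct, and it usefully makes explicit something the paper only uses implicitly: the passage from a hypothetical solution at $i^*$ to the tail identity (\ref{la_cola}) is justified precisely by the total equality $S^n_{i^\star}=0$. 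Likewise your treatment of the range $i<s_n$ --- isolate $p^n$ on the $p$-side and $(1-p)^n$ on the $(1-p)$-side using parts (iii) and (iv), write all remaining terms as mixed powers with $a,b>0$ using parts (v) and (vi), factor out $(1-p)$, and invoke Lemma \ref{lem_needed} --- coincides with the paper's first step.

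The genuine gap is in the sub-range $s_n\le i\le 2^n-2^{n-2}-1$, where your claimed reduction to the normal form (\ref{eq:16may_second}) fails. Once $i\ge s_n$, the partial sum on the $p$-side contains \emph{both} pure powers, $p^n$ at $j=1$ and $(1-p)^n$ at $j=s_n$ (part (iv), since $x_{s_n}^n$ is the all-twos word), and the $(1-p)$-side contains the same two values at the same indices; they cancel, and $S^n_i=0$ collapses to $\sum_k\bigl(p^{a_k}(1-p)^{b_k}-p^{b_k}(1-p)^{a_k}\bigr)=0$ with all $a_k,b_k>0$. This is \emph{not} of the form (\ref{eq:16may_second}): the parity argument behind Lemma \ref{lem_needed} needs the single odd term produced by a lone pure power of $p=\frac{1}{2k+1}$, and here, after multiplying through by a suitable power of $2k+1$, every surviving term carries a factor $2k$, so both sides are even and no contradiction arises. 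The paper's fix --- which your own existence result hands you for free --- is to subtract the equation at $i$ from the total equality at $i^\star$, obtaining equality of the tail sums over $\{i+1,\ldots,2^n-2^{n-2}\}$, namely (\ref{la_cola}); by parts (vii) and (viii) this tail contains the distinguished term $(1-p)^3p^{n-3}$ while every other tail term equals $(1-p)^3p^{a}(1-p)^{b}$ with $b>0$, so after factoring out $(1-p)^3$ one recovers an anchored equation of type (\ref{eq:16may_second}) and Lemma \ref{lem_needed} applies. Without this complementary-tail step, the ``combinatorial bookkeeping'' you defer at the end cannot be completed as described.
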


\begin{proof}
We can use Lemma \ref{lem_induction} (parts 3,4,5 and 6) to prove to prove that the equation (\ref{eq_goal_today}) does not have solution for $n\in \mathbb{N}\setminus\{1\}$ and $i\in \{1,\ldots,s_n-1\}.$ Indeed, it there was a solution, then after cancelation from the symmetric terms at both sides at both of the sums
$$
\sum_{j=1,\ldots,i} p_{x_j^n}=\sum_{j=1,\ldots,i} (1-p)_{x_j^n}
$$
we obtain 
$$
p^n+\sum_{k=1,\ldots,t} p_{x_{i_k}^n}=(1-p)^n+\sum_{k=1,\ldots,t} (1-p)_{x_{i_t}^n}
$$
where for every $k=1,\ldots,t$ we have that $p_{x_{i_k}^n}=p^{a_k} (1-p)^{b_k}$ and $(1-p)_{x_{i_t}^n}=p^{c_k} (1-p)^{d_k}$ for some $a_k,b_k,c_k,d_k>0.$ Substracting $\sum_{k=1,\ldots,t} p_{x_{i_k}^n}$ at both sides of the equation and factorising by $(1-p)$ we obtain 
$$
p^n=(1-p)^n+(1-p)\sum_{k=1,\ldots,t} (p^{c_k} (1-p)^{d_k-1}-p^{a_k} (1-p)^{b_k-1}).
$$
The last equation does not have solution by Lemma \ref{lem_needed}.

Therefore, it is left to prove that the equation does not have solution for $n\in \mathbb{N}\setminus\{1,2\}$ and $i\in \{s_n,\ldots, 2^n-2^{n-2}-1\}.$ It is easy to see that if the equation (\ref{eq_goal_today}) has solution for some $n_0\in \mathbb{N}\setminus\{1,2\}$ and $i^*\in \{s_{n_0},\ldots, 2^{n_0}-2^{n_0-2}-1\}$ then
\begin{equation}\label{la_cola}
\sum_{j=i^*+1,\ldots, 2^{n_0}-2^{n_0-2}} p_{x_j^{n_0}}=\sum_{j=i^*+1,\ldots, 2^{n_0}-2^{n_{0}-2}} (1-p)_{x_j^{n_{0}}}.
\end{equation}
We can use Lemma \ref{lem_induction} (parts 7 and 8) to prove that $p_{x_{2^{n_0}-2^{n_0-2}}^{n_0}}=(1-p)^3 p^{n_0-3}$ and $p_{x_{i}^{n_0}}=(1-p)^3 p^{a}(1-p)^{b}$ with $n_0-3>a\geq 0$ and  $n_0-3 \geq b> 0$ for every $i\in\{s_{n_0},2^{n_0}-2^{n_0-2}-1\}$ if $n_0\in \mathbb{N}\setminus\{1,2,3\},$ being the case $n_0=3$ easy to prove that the equation (\ref{la_cola}) does not have solution. We assume now that $n_0\in \mathbb{N}\setminus\{1,2,3\}$ and multiply at both sides of the equation (\ref{la_cola}) by $(1-p)^{-3}p^3,$ obtaining 
$$
p^n+\left(1-p\right) \sum_{i=1}^m\sum_{(a,b)\in A_i}p^{a}(1-p)^b=(1-p)^n+\left(1-p\right) \sum_{i=1}^m\sum_{(a,b)\in A_i}p^{a}(1-p)^b
$$
for some $m\in \mathbb{N}$ and $A_i\subset \mathbb{N}_0\times \mathbb{N}_0$ with $A_i$ a finite set for $i\in\{1,\ldots,m\}.$ Substracting  $\left(1-p\right) \sum_{i=1}^m\sum_{(a,b)\in A_i}p^{a}(1-p)^b$ at both sides we obtain the equation

$$
p^n=(1-p)^n+\left(1-p\right) \sum_{i=1}^m\sum_{(a,b)\in A_i} (p^{a}(1-p)^b- p^{b}(1-p)^a),
$$
that does not have solution in virtue of Lemma \ref{lem_needed}, therefore we obtain a contradiction, which finished the proof.
\end{proof}

The fourth lemma gives the first intersection between the graphs of the two Cantor map $F_{r,p}$ and $F_{r,1-p}.$

\begin{lemma}\label{lem_bound}
Let $p\in (0,1/2)$ such that $p=1/n$ with $n$ an odd natural, $r\in (2,\infty)$ and define $q:=1/r.$ Then $F_{r,1-p}(x)>F_{r,p}(x)$ for every $x\in (0,1-q+q^{2}),$ and $F_{r,1-p}(x)=F_{r,p}(x)$ for every $x$ in $(1-q+q^{2},1-q^{2}).$
\end{lemma}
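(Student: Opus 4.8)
The plan is to prove the two assertions by different means: the equality on the central gap by a direct self-similar computation, and the strict inequality by feeding Lemma~\ref{lema_global} into a sign-determination argument.

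First I would settle the equality. The interval $(1-q+q^{2},1-q^{2})$ is a gap of the common attractor $\Lambda^{f^r}$; it is the gap of $f^r_2[0,1]$ separating $f^r_{22}[0,1]=[1-q,1-q+q^{2}]$ from $f^r_{21}[0,1]=[1-q^{2},1]$. As this gap carries no mass, both $F_{r,p}$ and $F_{r,1-p}$ are constant on it. Since $[0,1-q+q^{2}]\cap\Lambda^{f^r}=f^r_1[0,1]\cup f^r_{22}[0,1]$ is a disjoint union, I would evaluate the constants directly: $F_{r,p}\equiv p+(1-p)^{2}=1-p+p^{2}$ and, replacing $p$ by $1-p$, $F_{r,1-p}\equiv(1-p)+p^{2}=1-p+p^{2}$. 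The two agree because $1-p+p^{2}$ is invariant under $p\mapsto 1-p$, which gives the ``moreover'' part.

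For the strict inequality I would argue that $F_{r,p}$ and $F_{r,1-p}$ cannot coincide anywhere on $(0,1-q+q^{2})$. Suppose they coincide at some $x_0$ there, with common value $c$. Both functions share the same gaps, on each of which they are constant; if the level set of $c$ is a (closed) gap $\bar G$, then $c$ is the common plateau height of $G$ for both maps. Writing the left endpoint of such a gap as $\sup f^r_{x_i^n}[0,1]$ for suitable $n,i$ (the flanking word is the same for both weightings), the lemma giving $F_{r,p}(\sup f^r_{x_i^n}[0,1])=\sum_{j\le i}p_{x_j^n}$ turns the coincidence into a solution of (\ref{eq_goal_today}) at $(n,i)$. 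Moreover $G\subset(0,1-q+q^{2})$ forces $\sup f^r_{x_i^n}[0,1]<1-q+q^{2}=\sup f^r_{x^n_{2^n-2^{n-2}}}[0,1]$, and since the right endpoints increase with the order $\prec$ (Lemma~\ref{pres_order}), this gives $i\le 2^n-2^{n-2}-1$. The hypothesis $p=1/n$ with $n$ odd is exactly what Lemma~\ref{lema_global} requires, so (\ref{eq_goal_today}) has no such solution, a contradiction.

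Granting that $H:=F_{r,1-p}-F_{r,p}$ never vanishes on $(0,1-q+q^{2})$, I would finish by the intermediate value theorem: $H$ is continuous and nonzero, hence of constant sign there. Evaluating on the first gap $(q,1-q)\subset(0,1-q+q^{2})$, where $F_{r,p}\equiv p$ and $F_{r,1-p}\equiv 1-p$, yields $H\equiv 1-2p>0$ since $p<1/2$; therefore $H>0$ throughout $(0,1-q+q^{2})$, as claimed. I expect the main obstacle to lie precisely in the previous paragraph: making rigorous that \emph{every} coincidence on the open interval is realised at a finite level as a common plateau height with index in the range $\{1,\dots,2^n-2^{n-2}-1\}$ covered by Lemma~\ref{lema_global}. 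The clean case is a coincidence interior to a gap; a coincidence at a point of $\Lambda^{f^r}$ (where the common value is a plateau height of neither map) must be reduced to the gap case, for instance by approximating $x_0$ from the left by right endpoints $\sup f^r_{x_{i_k}^{n_k}}[0,1]$ with $i_k\le 2^{n_k}-2^{n_k-2}-1$ and invoking the observation preceding Lemma~\ref{lem_induction}.
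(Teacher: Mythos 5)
Your proof follows the same route as the paper's own (which consists of just two lines): compute the common plateau value $1-p(1-p)=1-p+p^{2}$ on the central gap directly, and deduce the strict inequality on $(0,1-q+q^{2})$ from Lemma~\ref{lema_global}; your write-up is in fact considerably more detailed than the paper's in spelling out how that lemma is invoked and in supplying the sign determination via the first gap $(q,1-q)$. The step you honestly flag as the main obstacle --- reducing a hypothetical coincidence at a point of the attractor (rather than inside a gap) to a finite-level equality of partial sums, i.e.\ to an instance of~(\ref{eq_goal_today}) --- is likewise left unjustified in the paper, where it appears as the unproved assertion preceding Lemma~\ref{lem_induction}, so your attempt is not missing anything the paper actually supplies.
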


\begin{proof}
In the interval  $(1-q+q^{2},1-q^{2})$ we have that $$F_{r,1-p}(x)=1-\left(1-(1-p)\right)(1-p)=1-p(1-p)$$ and $F_{r,p}(x)=1-(1-p)p,$ therefore $F_{r,1-p}(x)=F_{r,p}(x)$ for every $x$ in  $(1-q+q^{2},1-q^{2}).$ For the proof of the other part, we use Lemma \ref{lema_global}.
\end{proof}

\begin{remark}
It is not know for the author if the fact that $F_{r,1-p}(x)>F_{r,p}(x)$ for every $x\in (0,1-q)$ can be proved using similar ideas from the proof of Lemma 3.3 in \cite{Mark_Pollicott_Italo_Cipriano}. It is not direct that it could be done, because the order $\prec$ is more delicate than the lexicographic order.
\end{remark}

\begin{remark}
Lemma Lemma \ref{lem_MyMap_bound} does not alone imply Lemma \ref{lem_bound}, indeed, for example, it gives no information for $r=4$ and $p\in (2/5,3/5).$
\end{remark}

\begin{remark}
Lemma \ref{lem_bound} should be still true for every $p\in (0,1/2),$ however, the author does not know a proof. 
\end{remark}

The fifth lemma is a ``zooming in and re-scaling'' property of the interactions between the graphs of the two Cantor map $F_{r,p}$ and $F_{r,1-p}.$ Define the map $S:\mathbb{R}^2\to \mathbb{R}^2$ by
$$
S(x,y):= \begin{pmatrix}-q^2&0\\0&-p(1-p) \end{pmatrix} \begin{pmatrix}x\\y \end{pmatrix}+\begin{pmatrix}1\\1 \end{pmatrix}.
$$
Observe that 
$$
\begin{pmatrix}-q^2&0\\0&-p(1-p) \end{pmatrix} \begin{pmatrix}x\\y \end{pmatrix}=\begin{pmatrix}\cos \pi &0\\0&\cos \pi \end{pmatrix}  \begin{pmatrix}q^2&0\\0&p(1-p) \end{pmatrix}, 
$$
Therefore the map $S$ acts by contraction ($q^2$ in the $x$-axis and $p(1-p)$ in the $y$-asis), rotation in $\pi$ and translation.

\begin{lemma}\label{last_lemma} 
Let $p\in (0,1), r\in (2,\infty)$ such that $\min\{p,1-p\}r\geq 1.$ Then
$$
\{S(x,F_{r,p}(x)):x\in [0,1]\}=\{(x,F_{r,p}(x)):x\in [1-q^2,1]\}.
$$
\end{lemma}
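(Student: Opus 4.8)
The plan is to reduce the claimed equality of sets to a single pointwise functional identity for $F_{r,p}$, and then to derive that identity from the self-similar structure of the stationary measure $\mu^{(f^r,p)}$. Writing the map out, $S(x,y)=(1-q^2 x,\,1-p(1-p)y)$, so the first coordinate $x\mapsto 1-q^2 x$ is a decreasing bijection of $[0,1]$ onto $[1-q^2,1]$. Hence the asserted equality holds if and only if, for every $x\in[0,1]$, the point $S(x,F_{r,p}(x))$ lies on the graph of $F_{r,p}$; that is, if and only if
$$F_{r,p}(1-q^2 x)=1-p(1-p)\,F_{r,p}(x)\qquad\text{for all }x\in[0,1].$$
This is the identity I would prove.

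Next I would record two functional equations coming from the disjointness $f^r_1[0,1]=[0,q]$, $f^r_2[0,1]=[1-q,1]$ (valid since $r>2$ forces $q<1/2$) together with stationarity. Because $f^r_1 y=qy$ is orientation preserving, so that $\mu^{(f^r,p)}(f^r_1 B)=p\,\mu^{(f^r,p)}(B)$, writing $[0,r^{-1}u]=f^r_1[0,u]$ on the left block gives
$$F_{r,p}(r^{-1}u)=p\,F_{r,p}(u)\qquad\text{for all }u\in[0,1],$$
which is the relation underlying the matrix lemma stated above. Because $f^r_2 y=1-qy$ is orientation reversing, with $\mu^{(f^r,p)}(f^r_2 B)=(1-p)\mu^{(f^r,p)}(B)$ and $\mu^{(f^r,p)}$ non-atomic, the right block yields
$$F_{r,p}(x)=1-(1-p)\,F_{r,p}\bigl(r(1-x)\bigr)\qquad\text{for all }x\in[1-q,1].$$
Here I use $\mu^{(f^r,p)}[a,1]=1-F_{r,p}(a)$ together with $f^r_2(r(1-x))=x$ and $f^r_2(1)=1-q$ to identify $[1-q,x]$ as $f^r_2[r(1-x),1]$.

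Finally I would combine the two equations on the subinterval $[1-q^2,1]$. Fix $x=1-q^2 t$ with $t\in[0,1]$, so $1-x=q^2 t$ and $r(1-x)=qt=r^{-1}t\in[0,q]$. Applying the right-block equation peels off the outer $f^r_2$ and lands in $[0,q]$; applying the left-block equation then peels off an $f^r_1$, and the relation $rq=1$ collapses $r^2(1-x)$ to $t$. Concretely,
$$F_{r,p}(1-q^2 t)=1-(1-p)\,F_{r,p}\bigl(r(1-x)\bigr)=1-(1-p)p\,F_{r,p}\bigl(r^2(1-x)\bigr)=1-p(1-p)\,F_{r,p}(t),$$
which is exactly the identity isolated in the first step; reading it back through $S$ gives the set equality.

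I expect the only delicate points to be bookkeeping: correctly handling the orientation reversal of $f^r_2$ and the interval endpoints in the right-block equation, and verifying that the left-block equation holds on all of $[0,q]$ rather than merely on the sub-block used in the earlier scaling lemma. I would note that the hypothesis $\min\{p,1-p\}r\geq1$ plays no essential role in this argument, which relies only on self-similarity; it is inherited from the surrounding lemmas.
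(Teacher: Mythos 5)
Your proof is correct and follows the only natural route here: the paper's own ``proof'' is the single line that the claim follows from the definition of $F_{r,p}$, and your argument is precisely the self-similarity computation that line is gesturing at (reduce the set equality to the pointwise identity $F_{r,p}(1-q^2x)=1-p(1-p)F_{r,p}(x)$, then compose the left-block relation $F_{r,p}(r^{-1}u)=pF_{r,p}(u)$ with the orientation-reversed right-block relation). Your side remarks are also accurate, including that the hypothesis $\min\{p,1-p\}r\geq 1$ is not actually used.
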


\begin{proof}
The proof follows by definition of $F_{r,p}.$
\end{proof}

Define $\Pi_1:\mathbb{R}^2\to \mathbb{R}$ denotes the projection on the first coordinate $\Pi_1(x,y)=x$ and $\Pi_2:\mathbb{R}^2\to \mathbb{R}$ denotes the projection on the second coordinate $\Pi_2(x,y)=y.$

In particular, combining Lemma \ref{last_lemma} and Lemma  \ref{lem_bound} we have the following. 

\begin{lemma}
Let $p\in (0,1/2)$ such that $p=1/n$ with $n$ an odd natural, $r\in (2,\infty)$ and define $q:=1/r.$ Then the graphs of $F_{r,p}$ and $F_{r,1-p}$ coincide exactly at the intervals $(a_k,b_k)$ for $k\in\mathbb{N}_0,$ where $q^*:=1-q^{2}, q_*:=1-q+q^{2}, p_*:=1-p(1-p),$
$$
a_k=
\begin{cases}
\Pi_1S^k(q_*,p_*) &\mbox{ if  $k$ even or zero,}\\
\Pi_1S^k(q^*,p_*) &\mbox{ if  $k$ odd,}
\end{cases}
$$
and
$$
b_k=
\begin{cases}
\Pi_1S^k(q^*,p_*) &\mbox{ if  $k$ even or zero,}\\
\Pi_1S^k(q_*,p_*) &\mbox{ if  $k$ odd.}
\end{cases}
$$
Moreover, for every $x\in (a_k,b_k)$ 
$$
F_{r,p}(x)=F_{r,1-p}(x)=\Pi_2 S^k(q_*,p_*).
$$
\end{lemma}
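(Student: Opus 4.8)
The plan is to combine the self-similarity of Lemma \ref{last_lemma} with the base-level description of the coincidence set supplied by Lemma \ref{lem_bound}. Write $\delta:=F_{r,1-p}-F_{r,p}$, and note that $S$ is a diagonal affine map, so its two coordinates evolve independently: the first acts by $\sigma(x):=1-q^2x$ and the second by $\tau(y):=1-p(1-p)y$, whence $S^k(q_*,p_*)=(\sigma^k(q_*),\tau^k(p_*))$, so that $\Pi_1 S^k(q_*,p_*)=\sigma^k(q_*)$ and $\Pi_2 S^k(q_*,p_*)=\tau^k(p_*)$. Since $p(1-p)=(1-p)\bigl(1-(1-p)\bigr)$, the map $S$ is unchanged under $p\mapsto 1-p$, so Lemma \ref{last_lemma} applies verbatim to both $F_{r,p}$ and $F_{r,1-p}$ and gives the conjugacies $F_{r,p}(\sigma(x))=\tau(F_{r,p}(x))$ and $F_{r,1-p}(\sigma(x))=\tau(F_{r,1-p}(x))$ for all $x\in[0,1]$. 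Subtracting, and using $\tau(y_1)-\tau(y_2)=-p(1-p)(y_1-y_2)$, yields the key functional equation
\[
\delta(\sigma(x))=-p(1-p)\,\delta(x)\qquad(x\in[0,1]),
\]
so $\delta(\sigma(x))=0$ if and only if $\delta(x)=0$, while the sign of $\delta$ is reversed by each application of $\sigma$.

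First I would record the base case. By Lemma \ref{lem_bound}, on $[0,q^*]$ one has $\delta>0$ on $(0,q_*)$ and $\delta=0$ precisely on $(q_*,q^*)$, where moreover $F_{r,p}=F_{r,1-p}=1-p(1-p)=p_*$. Thus the level-$0$ coincidence interval is $(q_*,q^*)=(a_0,b_0)$ with common value $p_*=\Pi_2 S^0(q_*,p_*)$, and the adjacent gap $(0,q_*)$ carries a strict sign. Next I would propagate this by iterating $\sigma$. Since $\sigma$ is an orientation-reversing contraction of ratio $q^2<1$ with unique fixed point $x^*=1/(1+q^2)=r^2/(r^2+1)$, the image $\sigma^k((q_*,q^*))$ is the open interval with endpoints $\sigma^k(q_*)$ and $\sigma^k(q^*)$; for even $k$ the order is preserved and for odd $k$ reversed, which is exactly the case distinction defining $a_k,b_k$. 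On that interval the conjugacy gives $F_{r,p}=F_{r,1-p}=\tau^k(p_*)=\Pi_2 S^k(q_*,p_*)$, proving the asserted common value, and the functional equation forces $\delta$ to keep a strict, alternating sign on each gap $\sigma^k((0,q_*))$.

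The main work, and the real obstacle, is exhaustiveness (the word \emph{exactly}): I must verify that the coincidence intervals $(a_k,b_k)=\sigma^k((q_*,q^*))$ together with the strict-sign gaps $\sigma^k((0,q_*))$ tile $[0,1]\setminus\{x^*\}$ up to the countable set of shared endpoints, so that no further coincidence can occur. Here the elementary identities $\sigma^2(0)=\sigma(1)=q^*=b_0$ and $\sigma^3(0)=\sigma(q^*)=a_1$ are the glue: they show that to the left of $x^*$ the even-indexed data line up in increasing order as $(0,q_*),(a_0,b_0),(b_0,a_2),(a_2,b_2),\dots$, and to the right of $x^*$ the odd-indexed data line up as $\dots,(a_3,b_3),(b_3,a_1),(a_1,b_1),(b_1,1)$, both sequences nesting down to $x^*$ by contractivity of $\sigma$. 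Consequently every $x\neq x^*$ lies in exactly one such piece, so $\delta(x)=0$ if and only if $x\in\bigcup_{k\ge 0}(a_k,b_k)$, which is the claim. I would close by observing that $x^*$ is the common accumulation point of both families, consistent with the breakpoint $r^2/(r^2+1)$ that appears in Theorem \ref{teo_cuatro}.
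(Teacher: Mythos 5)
Your proof is correct and follows exactly the route the paper intends: the paper states this lemma without any written proof, merely asserting that it follows by ``combining Lemma \ref{last_lemma} and Lemma \ref{lem_bound}'', and your argument is a careful expansion of precisely that combination --- the conjugacy $\delta\circ\sigma=-p(1-p)\,\delta$ extracted from the self-similarity, the base case on $[0,q^*]$ from Lemma \ref{lem_bound}, and the tiling of $[0,1]\setminus\{r^2/(r^2+1)\}$ by the iterated images, which supplies the exhaustiveness step the paper leaves implicit. The only point worth noting is that you, like the paper, invoke Lemma \ref{last_lemma} under hypotheses ($r\in(2,\infty)$, $p=1/n$) that do not literally guarantee its stated condition $\min\{p,1-p\}r\geq 1$; this is harmless because the self-similarity identity $F_{r,p}(1-q^2x)=1-p(1-p)F_{r,p}(x)$ holds for all $p\in(0,1)$ and $r>2$ directly from the definition of $F_{r,p}$, but it deserves a remark.
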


By induction we deduce that 

\begin{lemma}
Let $p\in (0,1/2)$ such that $p=1/n$ with $n$ an odd natural and $k\in\mathbb{N}.$ Then for every $x\in (b_{2k-1},a_{2k+1})$ 
$$
F_{r,1-p}(x)>F_{r,p}(x)
$$
and for every $x\in (b_{2(k+1)},a_{2k})$
$$
F_{r,1-p}(x)<F_{r,p}(x).
$$
Also, for every $x\in (0,a_1)$
$$
F_{r,1-p}(x)>F_{r,p}(x)
$$
and for every $x\in (b_2,1)$
$$
F_{r,1-p}(x)<F_{r,p}(x).
$$
\end{lemma}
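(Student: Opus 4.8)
The plan is to exploit the self-similar functional equation furnished by Lemma \ref{last_lemma}. Writing $\sigma(x):=1-q^2x$ and abbreviating $D(x):=F_{r,1-p}(x)-F_{r,p}(x)$, Lemma \ref{last_lemma} is equivalent to the identity $F_{r,p}(\sigma(x))=1-p(1-p)F_{r,p}(x)$ for all $x\in[0,1]$. Since the vertical contraction ratio $p(1-p)$ appearing in $S$ is symmetric under $p\mapsto 1-p$, the very same map $S$ also governs $F_{r,1-p}$, so $F_{r,1-p}(\sigma(x))=1-p(1-p)F_{r,1-p}(x)$; the first thing I would record is this symmetric companion of Lemma \ref{last_lemma}. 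Subtracting the two identities yields the key relation
\begin{equation*}
D(\sigma(x))=-p(1-p)\,D(x)\qquad\text{for every }x\in[0,1],
\end{equation*}
which says that each application of $\sigma$ strictly reverses the sign of $D$. Here $\sigma$ is an orientation-reversing contraction of $[0,1]$ onto $[1-q^2,1]=[b_0,1]$ with ratio $q^2$, whose unique fixed point is $x^\ast:=1/(1+q^2)=r^2/(r^2+1)$, precisely the abscissa that separates the two regimes in the conclusion.

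Next I would use the preceding lemma, which identifies the coincidence set of $F_{r,p}$ and $F_{r,1-p}$ as $\bigcup_{k\ge 0}(a_k,b_k)$ with $a_k,b_k$ of the form $\Pi_1S^k(q_*,p_*)$ and $\Pi_1S^k(q^\ast,p_*)$, to track how $\sigma$ permutes the complementary gaps. Since $\Pi_1S=\sigma$ fixes $x^\ast$ and scales signed distances to $x^\ast$ by $(-q^2)^k$ after $k$ steps, the even-indexed coincidence intervals lie in $(0,x^\ast)$ and the odd-indexed ones in $(x^\ast,1)$, both families accumulating at $x^\ast$. A short computation with these closed forms shows that $\sigma$ carries the leftmost gap $(0,a_0)$ onto the rightmost gap $(b_1,1)$, carries each gap of $(x^\ast,1)$ onto a gap of $(0,x^\ast)$, and carries each gap of $(0,x^\ast)$ onto the next gap of $(x^\ast,1)$, reversing order and shifting indices by one. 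Consequently the single forward $\sigma$-orbit of $(0,a_0)$ visits every complementary gap exactly once, alternating between the two sides of $x^\ast$.

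The induction then runs cleanly. The base case is Lemma \ref{lem_bound}, which gives $D>0$ on $(0,a_0)$ and $D=0$ on $(a_0,b_0)$. Applying the key relation once gives $D<0$ on $\sigma((0,a_0))=(b_1,1)$, and propagating the sign flip along the orbit described above shows, by induction on $k$, that $D>0$ on every complementary gap contained in $(0,x^\ast)$ and $D<0$ on every complementary gap contained in $(x^\ast,1)$; this yields the asserted inequalities on each of the intervals listed in the statement, the two staircases meeting only in the limit at $x^\ast$. I expect the main obstacle to be purely bookkeeping rather than conceptual: verifying precisely that $\sigma$ realizes the claimed order-reversing, index-shifting permutation of the gaps and that the orbit is exhaustive, and handling the boundary and accumulation behaviour at $x^\ast=r^2/(r^2+1)$, so that the alternating sign of $D$ is pinned down on each individual gap and not merely on each side of $x^\ast$.
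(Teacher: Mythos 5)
Your proposal is correct and is essentially the paper's argument: the paper gives no details beyond ``by induction we deduce,'' and the induction it intends is precisely your sign-reversing relation $D\circ\sigma=-p(1-p)\,D$ extracted from Lemma~\ref{last_lemma} (together with its $p\mapsto 1-p$ companion), seeded by Lemma~\ref{lem_bound} on $(0,a_0)$ and propagated along the $\sigma$-orbit $(0,a_0)\to(b_1,1)\to(b_0,a_2)\to(b_3,a_1)\to\cdots$, which does exhaust the complementary gaps as you claim. One caveat: what your argument actually establishes --- $D>0$ on every gap to the left of $x^{*}=1/(1+q^{2})$ and $D<0$ on every gap to the right --- is the correct conclusion and should be stated in that form rather than deferred to ``the intervals listed in the statement,'' since the printed endpoints (for instance $(b_{2},1)$, which straddles $x^{*}$ and so contains gaps of both signs) are transposed and do not literally carry the asserted inequalities.
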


We observe that the limit $$\lim_{k\to\infty}\Pi_1 S^k(q_*,p_*)=\lim_{k\to\infty}\Pi_1 S^k(q^*,p_*)=\frac{1}{1+q^2},$$ then from the previous lemma we have the following

\begin{lemma}
Let $p\in (0,1/2)$ such that $p=1/(2k+1)$ with $k\in\mathbb{N}.$ For every $x \in (0,\frac{1}{1+q^2})$ 
$$
F_{r,1-p}(x)>F_{r,p}(x)
$$
and for every $x \in (\frac{1}{1+q^2},1)$
$$
F_{r,1-p}(x)<F_{r,p}(x).
$$
\end{lemma}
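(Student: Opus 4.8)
The plan is to prove both strict inequalities by deducing them from the preceding lemma through a limiting argument anchored at the common accumulation point $\frac{1}{1+q^2}$ of the endpoints $a_k,b_k$, the engine being the orientation-reversing self-similarity $S$. Writing $\sigma(x):=1-q^2x$ for the first coordinate of $S$, I would first record the functional identity
$$
(F_{r,1-p}-F_{r,p})(\sigma(x))=-\,p(1-p)\,(F_{r,1-p}-F_{r,p})(x),\qquad x\in[0,1],
$$
which follows from Lemma \ref{last_lemma} applied to both $F_{r,p}$ and $F_{r,1-p}$; this is legitimate because $p(1-p)=(1-p)p$, so the same $S$ rescales both graphs onto their restriction over $[1-q^2,1]$. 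Since the factor $-p(1-p)$ is nonzero and negative, $\sigma$ reverses the sign of $F_{r,1-p}-F_{r,p}$ and carries strict inequalities to strict inequalities.

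I would then fix the base level from Lemma \ref{lem_bound}: $F_{r,1-p}>F_{r,p}$ strictly on $(0,q_*)=(0,a_0)$, while $F_{r,1-p}=F_{r,p}$ on $(q_*,q^*)=(a_0,b_0)$. Feeding this into the identity above propagates the whole sign configuration on $[0,1]$, reversed and contracted, onto $[b_0,1]=[1-q^2,1]$, and iterating $S$ reproduces the full alternating pattern of the preceding lemma. Concretely, the complementary intervals are exactly the images $\sigma^k\big((0,q_*)\big)$: the even iterates $\sigma^{2j}\big((0,q_*)\big)$ lie to the left of $\frac{1}{1+q^2}$ and carry the sign $+$, the odd iterates $\sigma^{2j+1}\big((0,q_*)\big)$ lie to its right and carry the sign $-$, and, because $|\sigma'|=q^2<1$, they shrink geometrically and accumulate only at the fixed point $\frac{1}{1+q^2}$ of $\sigma$.

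Passing to the limit, the identities $\lim_{k\to\infty}\Pi_1S^k(q_*,p_*)=\lim_{k\to\infty}\Pi_1S^k(q^*,p_*)=\frac{1}{1+q^2}$ show that the positive intervals $\sigma^{2j}\big((0,q_*)\big)$ fill out $(0,\frac{1}{1+q^2})$ and the negative intervals $\sigma^{2j+1}\big((0,q_*)\big)$ fill out $(\frac{1}{1+q^2},1)$, up to the interleaved plateaus. On each of these complementary intervals the sign rule yields $F_{r,1-p}(x)>F_{r,p}(x)$ on the left and $F_{r,1-p}(x)<F_{r,p}(x)$ on the right, and strictness is never lost because each application of $\sigma$ merely multiplies the difference by the nonzero constant $-p(1-p)$.

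The main obstacle is upgrading these interval-by-interval strict inequalities to the assertion for every $x$ in the two open intervals, which forces one to control the alternation exactly at the top of the construction, near $\frac{1}{1+q^2}$. I must verify that the orientation reversal in $\sigma$ places every positive complementary interval strictly to the left of $\frac{1}{1+q^2}$ and every negative one strictly to its right, so that no interval straddles the fixed point and no residual sub-interval of undetermined sign survives the limit, and I must account for the plateaus $\sigma^k\big((q_*,q^*)\big)$ that Lemma \ref{lem_bound} interleaves with them. This is exactly where the parity hypothesis $p=1/(2k+1)$ enters, through Lemma \ref{lem_bound}, which both pins down the base sign and forbids spurious coincidences, and where the contraction $q^2<1$ guarantees that the nested intervals collapse to the single point $\frac{1}{1+q^2}$, leaving the two open intervals completely accounted for.
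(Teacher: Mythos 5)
Your route is the same as the paper's: the paper obtains this lemma by combining Lemma \ref{last_lemma} (the self-similarity under $S$) with Lemma \ref{lem_bound} (the base sign on $(0,q_*)$ and the coincidence on $(q_*,q^*)$), deducing the alternating-sign lemma by induction and then simply observing that the endpoints $a_k,b_k$ converge to $\frac{1}{1+q^2}$. Your explicit functional identity $(F_{r,1-p}-F_{r,p})\circ\sigma=-p(1-p)\,(F_{r,1-p}-F_{r,p})$, legitimate because the matrix in $S$ is symmetric under $p\leftrightarrow 1-p$, is a clean formalization of what the paper leaves implicit, and your bookkeeping of the complementary intervals $\sigma^k\bigl((0,q_*)\bigr)$ --- even iterates to the left of the fixed point carrying sign $+$, odd iterates to the right carrying sign $-$, accumulating only at $\frac{1}{1+q^2}$ because $|\sigma'|=q^2<1$ --- reproduces the penultimate lemma exactly.

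However, the step you flag as the main obstacle --- upgrading to strict inequality at \emph{every} point of the two open intervals --- cannot be carried out, and your suggestion that the parity hypothesis, through Lemma \ref{lem_bound}, ``forbids spurious coincidences'' gets that lemma backwards. Lemma \ref{lem_bound} asserts genuine equality $F_{r,1-p}=F_{r,p}$ on the plateau $(q_*,q^*)=(1-q+q^2,\,1-q^2)$ (both CDFs equal $1-p(1-p)$ there, as a direct computation with the level-two cylinders shows), and this plateau lies strictly inside $(0,\frac{1}{1+q^2})$, since $1-q^2<\frac{1}{1+q^2}$ is equivalent to $1-q^4<1$. Your own identity then propagates infinitely many such equality plateaus $\sigma^k\bigl((q_*,q^*)\bigr)$ into \emph{both} open intervals, accumulating at the fixed point; the parity hypothesis (via Lemma \ref{lema_global}) only rules out coincidence off these plateaus. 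So the strict version is false on the plateaus, and what your argument --- like the paper's --- actually establishes is $F_{r,1-p}\geq F_{r,p}$ on $(0,\frac{1}{1+q^2})$ and $F_{r,1-p}\leq F_{r,p}$ on $(\frac{1}{1+q^2},1)$, with strictness precisely off the closures of the coincidence intervals. This is a defect of the lemma's statement rather than of your method (the non-strict version is all that the Dall'Aglio--Vallender formula requires for Theorem \ref{teo_cuatro}), but the last paragraph of your proposal promises a pointwise strictness that no argument can deliver, and you should state and prove the non-strict version instead.
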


\section{Computations and examples}

We provide a computational part, including a few examples for each theorem.

\subsubsection{Examples Theorem \ref{kmaps} }

Examples where Theorem \ref{kmaps} applies:

\begin{description}
\item[E.g. 1] $f=(f_1,f_2,f_3)$ with 
$$
\begin{aligned}
f_1x&=x/5\\
f_2x&=x/5+2/5\\
f_3x&=x/5+4/5
\end{aligned}
$$
 $p=(1/2,1/4,1/4)$ and $q=(1/4,1/4,1/2).$

\item[E.g. 2] $f=(f_1,f_2,f_3)$ with 
$$
\begin{aligned}
f_1x&=x/5\\
f_2x&=3x/5+1/5\\
f_3x&=x/5+4/5
\end{aligned}
$$
 $p=(1/4,1/3,5/12)$ and $q=(1/6,1/4,7/12).$ 
\item[E.g. 3]
$$
\begin{aligned}
f_1x&=\sin(\pi x/4)/6\\
f_2x&=x/6+1/3\\
f_3x&=\sin(\pi x/4)/3+2/3
\end{aligned}
$$
 $p=(0.1,0.3,0.6)$ and $q=(0.2,0.5,0.3).$ 
\end{description}

Examples where Theorem \ref{teo_cuatro} does not apply:

\begin{description}
\item[E.g. 4] $$
\begin{aligned}
f_1x&=\sin(\pi x/4)/6\\
f_2x&=x/6+1/3\\
f_3x&=\sin(\pi x/4)/3+2/3
\end{aligned}
$$
 $p=(0.3,0.1,0.6)$ and $q=(0.2,0.5,0.3).$ 
\item[E.g. 5] $$
\begin{aligned}
f_1x&=\sin(\pi x/4)/6\\
f_2x&=-x/6+1/2\\
f_3x&=\sin(\pi x/4)/3+2/3
\end{aligned}
$$
 $p=(0.1,0.3,0.6)$ and $q=(0.2,0.5,0.3).$ 
\end{description}

\subsubsection{Examples Theorem \ref{teo_dos} }

Examples where Theorem \ref{teo_dos} applies:

\begin{description}
\item[E.g. 6] $$
\begin{aligned}
f_1x&=x/3 & g_1x&=x/6\\
f_2x&=x/3+2/3& g_2x&=x/6+2/3\\
\end{aligned}
$$
 $p=(0.4,0.6)$ and $q=(0.5,0.5).$ 
\end{description}

Examples where Theorem \ref{teo_dos} does not apply:

\begin{description}
\item[E.g. 7] $$
\begin{aligned}
f_1x&=x/3 & g_1x&=x/6\\
f_2x&=x/3+2/3& g_2x&=x/6+2/3\\
\end{aligned}
$$
 $p=(0.5,0.5)$ and $q=(0.4,0.6).$
 \item[E.g. 8] $$
\begin{aligned}
f_1x&=x/3 & g_1x&=x/6\\
f_2x&=x/3+2/3& g_2x&=x/6+2/3\\
\end{aligned}
$$
 $p=(0.5,0.5)$ and $q=(0.25,0.75).$ 
\end{description}

\subsubsection{Examples Theorem \ref{teo_cuatro} }

Examples where Theorem \ref{teo_cuatro} applies:

\begin{description}
\item[E.g. 9] $$
\begin{aligned}
f_1^3x&=x/3\\
f_2^3x&=1-x/3\\
\end{aligned}
$$
 $p=(1/3,2/3)$ and $q=(2/3,1/3).$ 
\item[E.g. 10] $$
\begin{aligned}
f_1^7x&=x/7\\
f_2^7x&=1-x/7\\
\end{aligned}
$$
 $p=(1/5,4/5)$ and $q=(4/5,1/5).$ 
\end{description}

Example where Theorem \ref{teo_cuatro} does not apply:

\begin{description}
\item[E.g. 11] $$
\begin{aligned}
f_1^3x&=x/3\\
f_2^3x&=1-x/3\\\end{aligned}
$$
 $p=(0.1,0.9)$ and $q=(0.3,0.4).$
\end{description}

Examples of the bounds obtained in Lemma \ref{lem_MyMap_bound}.

\begin{description}
\item[E.g. 12] $$
\begin{aligned}
f_1^{2.1}x&=x/2.1\\
f_2^{2.1}x&=1-x/2.1
\end{aligned}
$$
 $p=(1/2.1,1.1/2.1).$
 \item[E.g. 13] $$
\begin{aligned}
f_1^{3}x&=x/3\\
f_2^{3}x&=1-x/3
\end{aligned}
$$
 $p=(2/3,1/3).$ 
\end{description}

Examples where Proposition \ref{prop_cinco} applies:

\begin{description}
\item[E.g. 14] $$
\begin{aligned}
f_1x&=x/3 & g_1x&=x/6\\
f_2x&=x/3+2/3& g_2x&=x/6+2/3\\
\end{aligned}
$$
 $p=(0.5,0.5)$ and $q=(0.4,0.6).$
\end{description}

Examples where Proposition \ref{prop_cinco} does not apply:

\begin{description}
\item[E.g. 15] $$
\begin{aligned}
f^{2.5}_1 x =& \frac{x}{2.5} & g^{3}_1 x =& \frac{x}{3}\\
f^{2.5}_2 x =& 1-\frac{x}{2.5} & g^{3}_2 x =& \frac{x}{3}+\frac{2}{3}
\end{aligned}
$$
 $p=(0.25,0.75)$ and $q=(0.25,0.75).$
 \item[E.g. 16] $$
\begin{aligned}
f^{3}_1 x =& \frac{x}{3} & g^{3}_1 x =& \frac{x}{3}\\
f^{3}_2 x =& 1-\frac{x}{3}& g^{3}_2 x =& \frac{x}{3}+\frac{2}{3}
\end{aligned}
$$
 $p=(0.25,0.75)$ and $q=(1/3,2/3).$ 
\end{description}

\begin{figure}[!]
\centering
\includegraphics[scale=0.33]{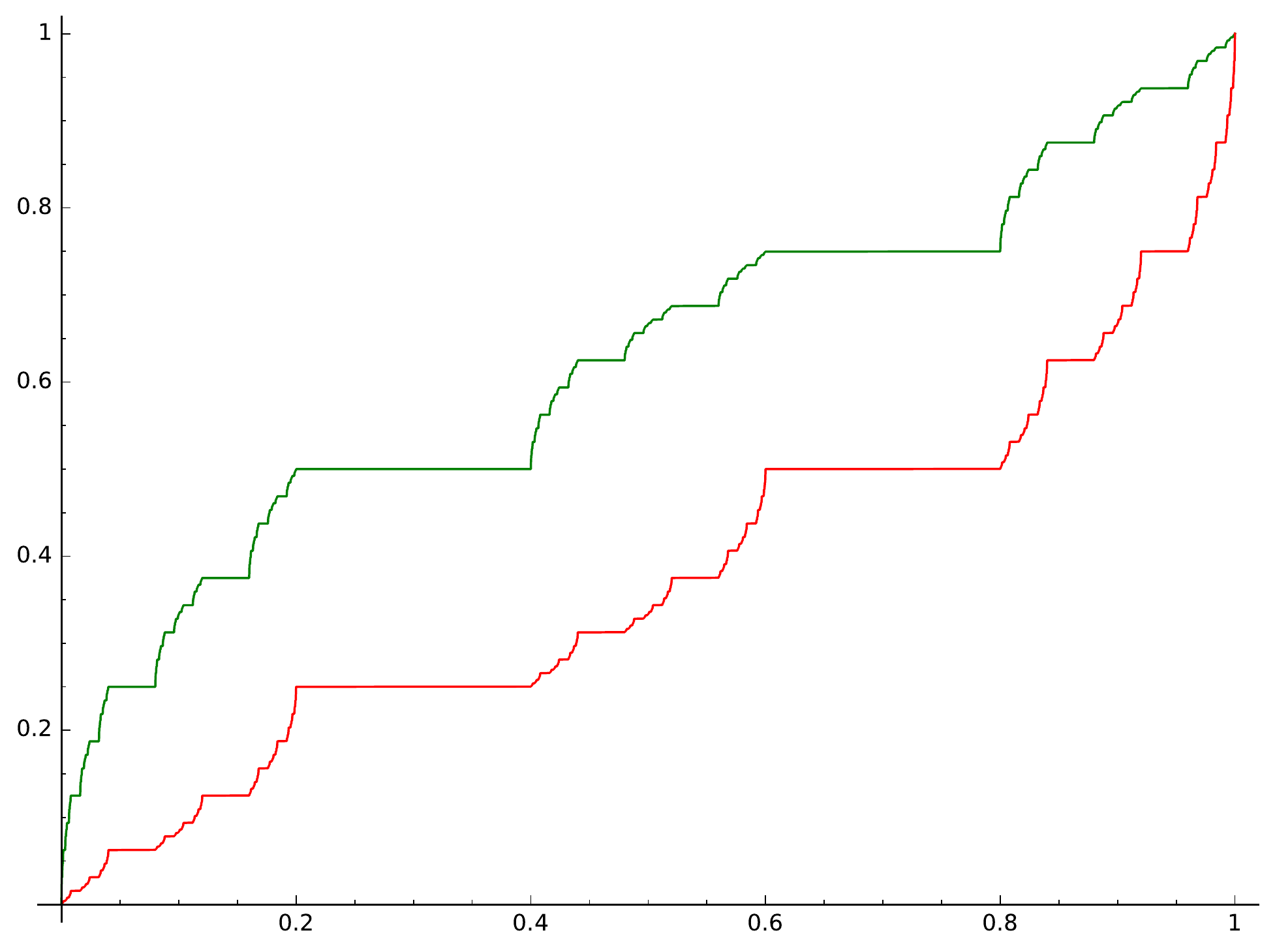}
\caption{E.g. 1. Cantor staircase for $\mu^{(f,p)}[0,x]$ in green and for $\mu^{(f,q)}[0,x]$ in red.}
\end{figure}

\begin{figure}[!]
\centering
\includegraphics[scale=0.33]{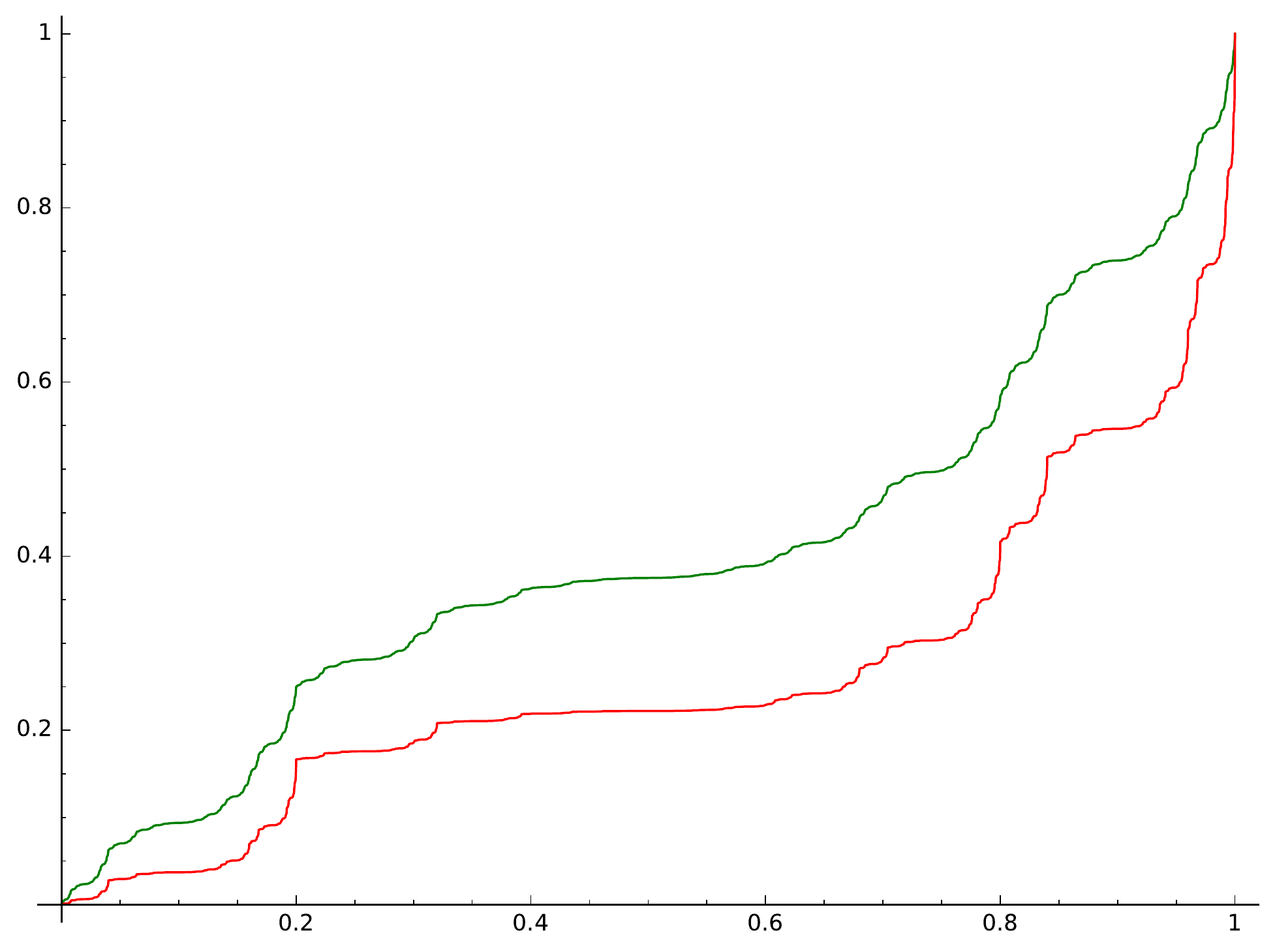}
\caption{E.g. 2. Cantor staircase for $\mu^{(f,p)}[0,x]$ in green and for $\mu^{(f,q)}[0,x]$ in red.}
\end{figure}

\begin{figure}[!]
\centering
\includegraphics[scale=0.33]{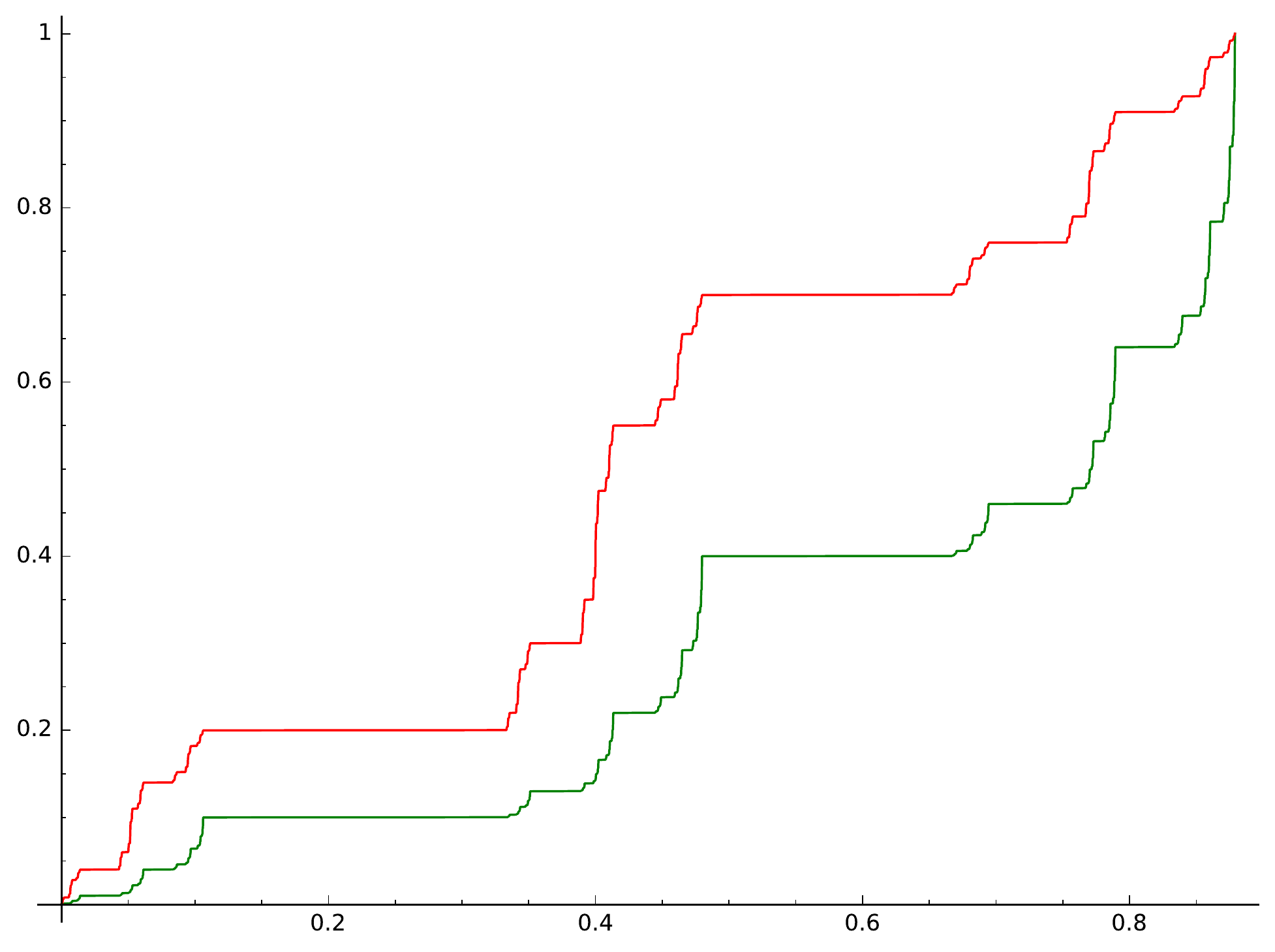}
\caption{E.g. 3. Cantor staircase for $\mu^{(f,p)}[0,x]$ in green and for $\mu^{(f,q)}[0,x]$ in red.}
\end{figure}

\begin{figure}[!]
\centering
\includegraphics[scale=0.33]{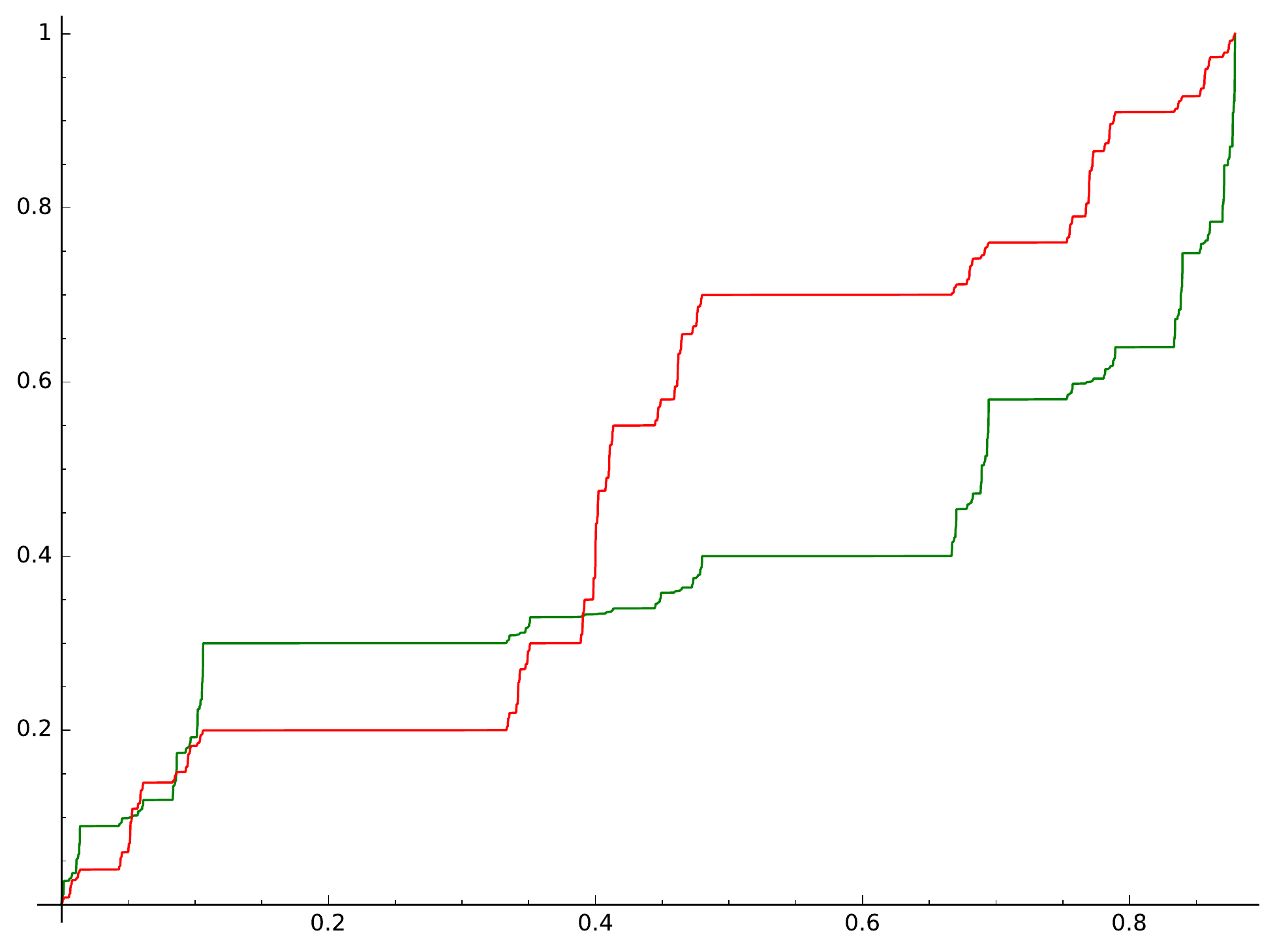}
\caption{E.g. 4. Cantor staircase for $\mu^{(f,p)}[0,x]$ in green and for $\mu^{(f,q)}[0,x]$ in red.}
\end{figure}

\begin{figure}[!]
\centering
\includegraphics[scale=0.33]{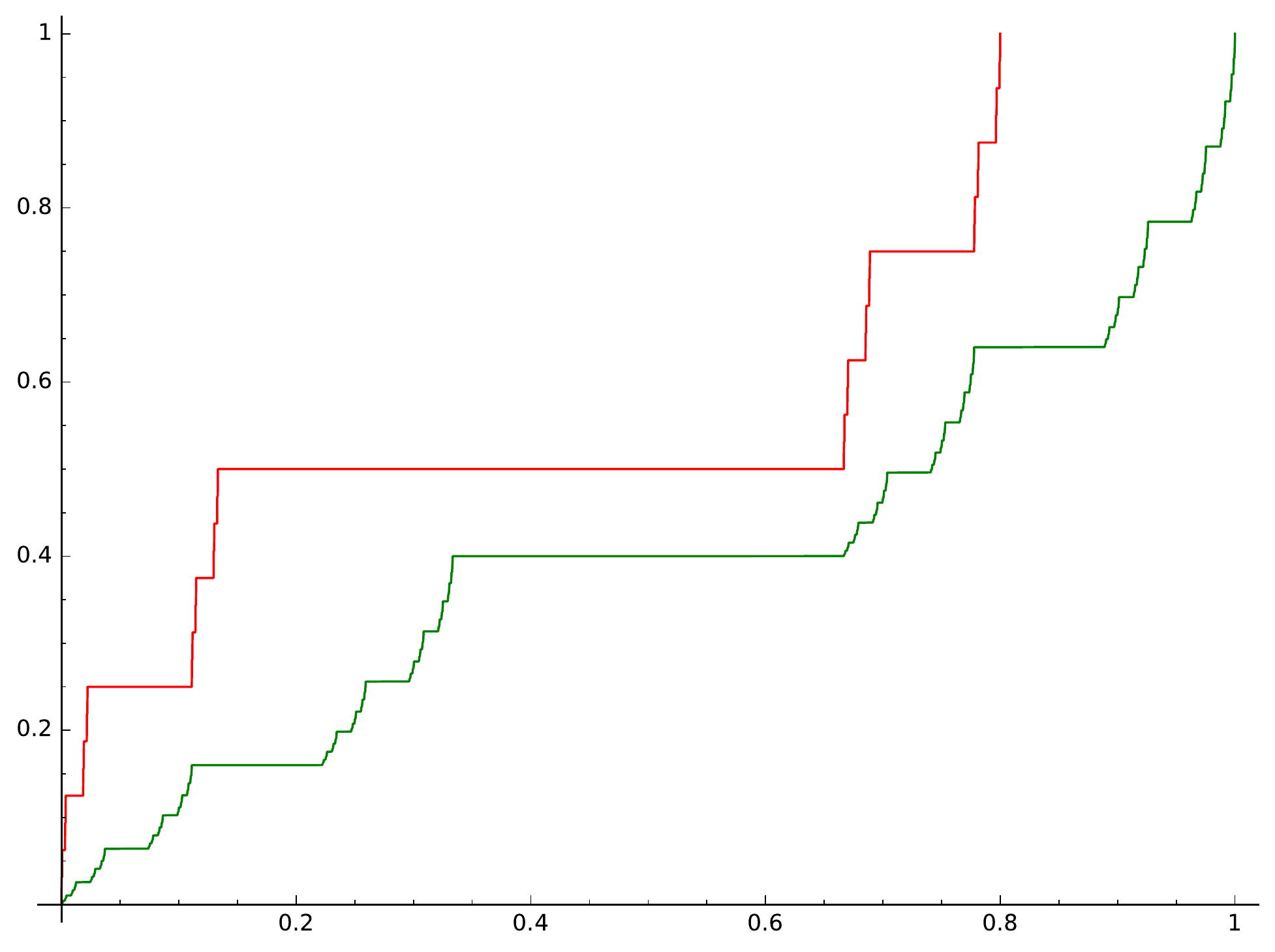}
\caption{E.g. 6. Cantor staircase for $\mu^{(f,p)}[0,x]$ in green and for $\mu^{(g,q)}[0,x]$ in red.}
\end{figure}

\begin{figure}[!]
\centering
\includegraphics[scale=0.33]{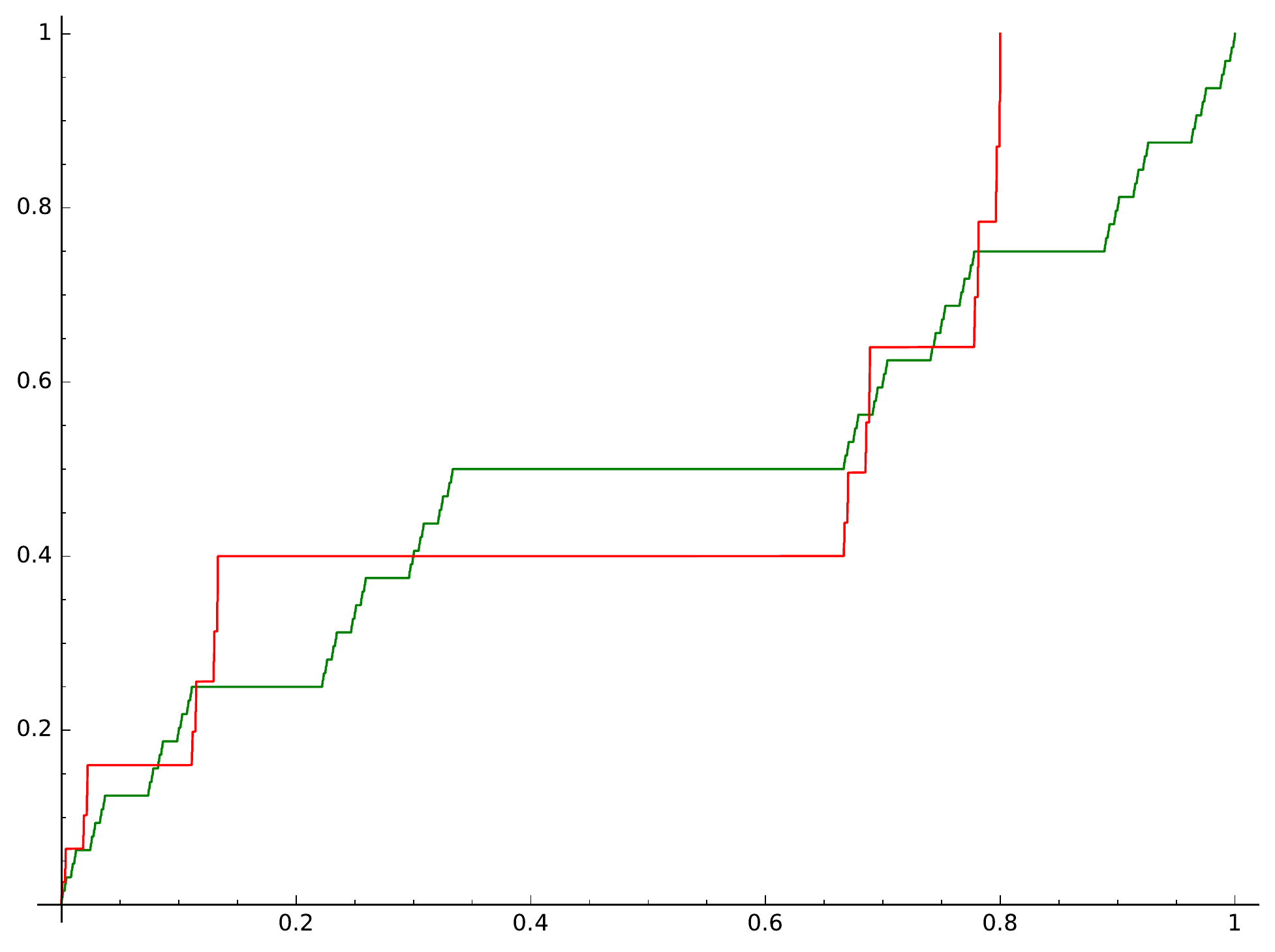}
\caption{E.g. 7. Cantor staircase for $\mu^{(f,p)}[0,x]$ in green and for $\mu^{(g,q)}[0,x]$ in red.}
\end{figure}

\begin{figure}[!]
\centering
\includegraphics[scale=0.33]{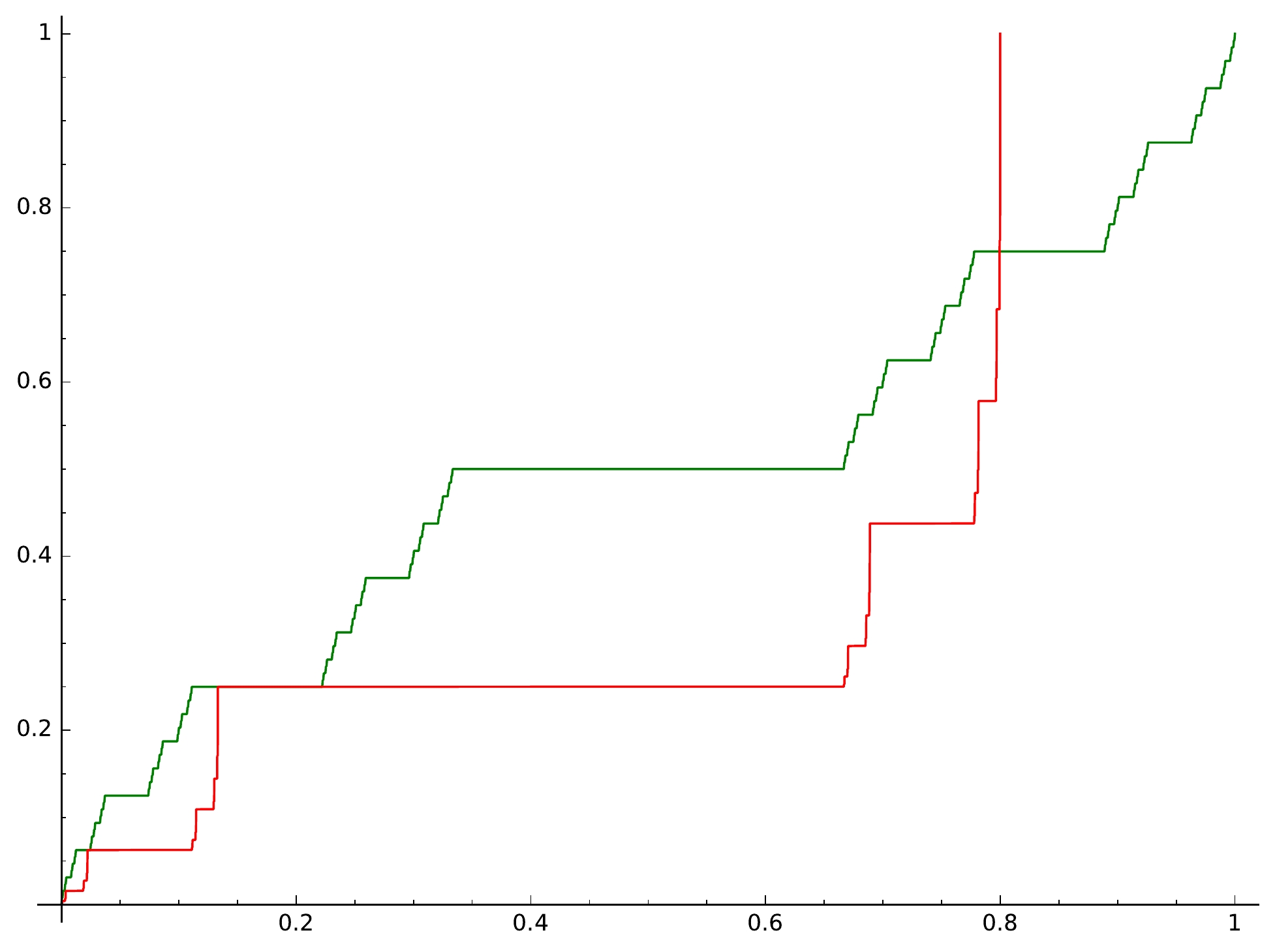}
\caption{E.g. 8. Cantor staircase for $\mu^{(f,p)}[0,x]$ in green and for $\mu^{(g,q)}[0,x]$ in red.}
\end{figure}


\begin{figure}[!]
\centering
\includegraphics[scale=0.33]{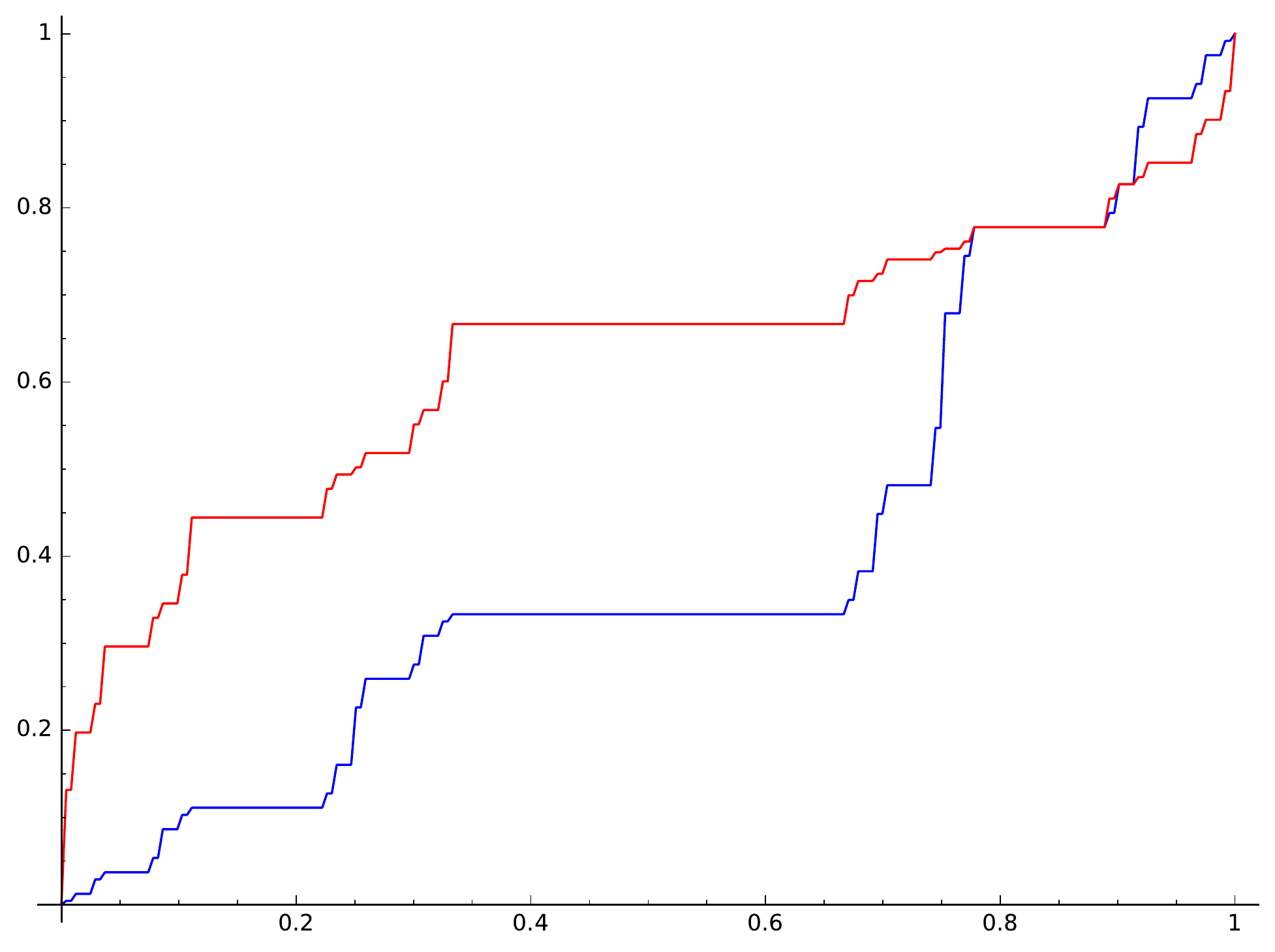}
\caption{E.g. 9. Cantor staircase for $\mu^{(f^{3},(1/3,2/3))}[0,x]$ in blue and for $\mu^{(f^{3},(2/3,1/3))}[0,x]$ in red.}
\end{figure}

\begin{figure}[!]
\centering
\includegraphics[scale=0.33]{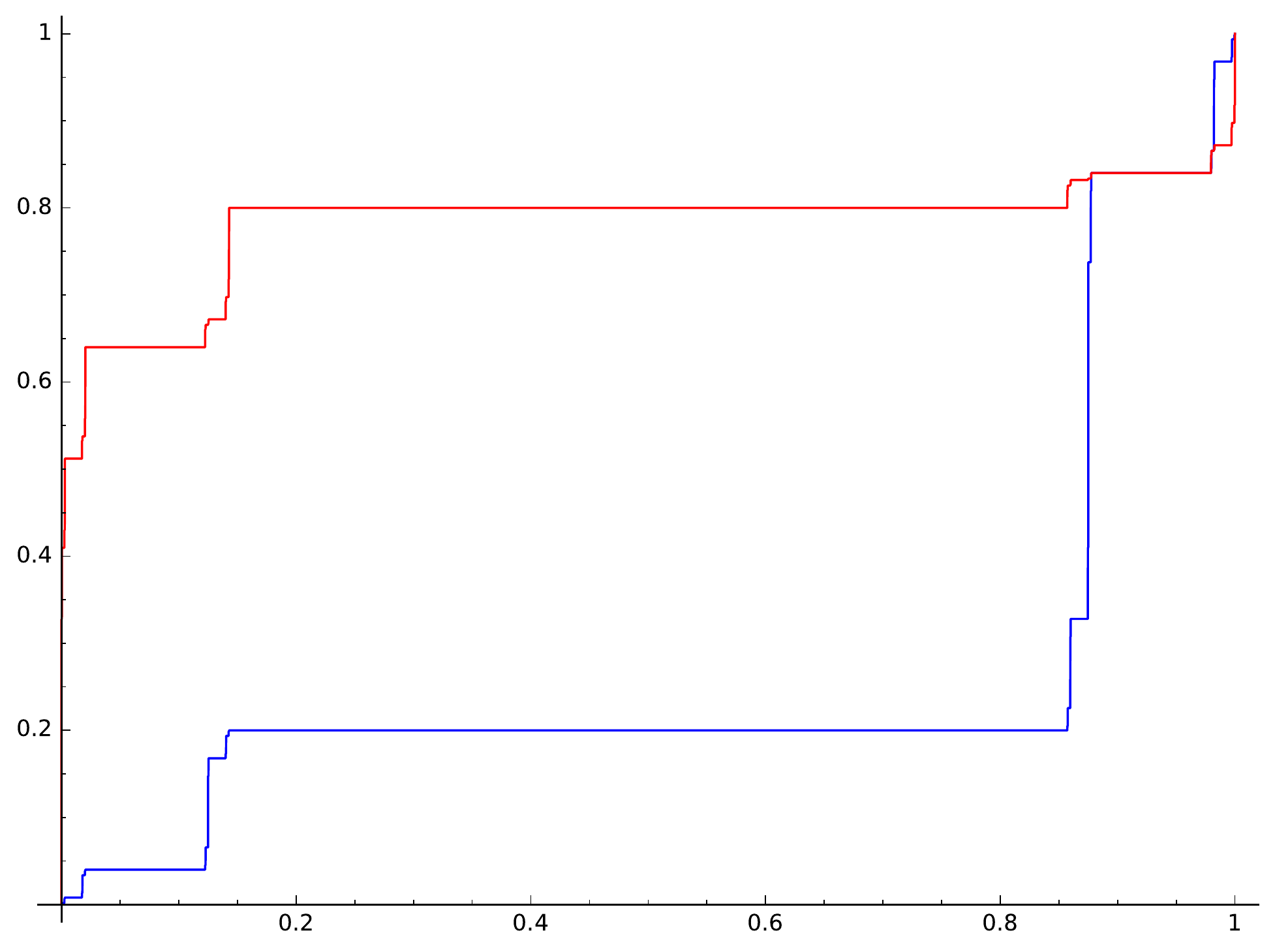}
\caption{E.g. 10. Cantor staircase for $\mu^{(f^{7},(1/5,4/5))}[0,x]$ in blue and for $\mu^{(f^{7},(4/5,1/5))}[0,x]$ in red.}
\end{figure}

\begin{figure}[!]
\centering
\includegraphics[scale=0.33]{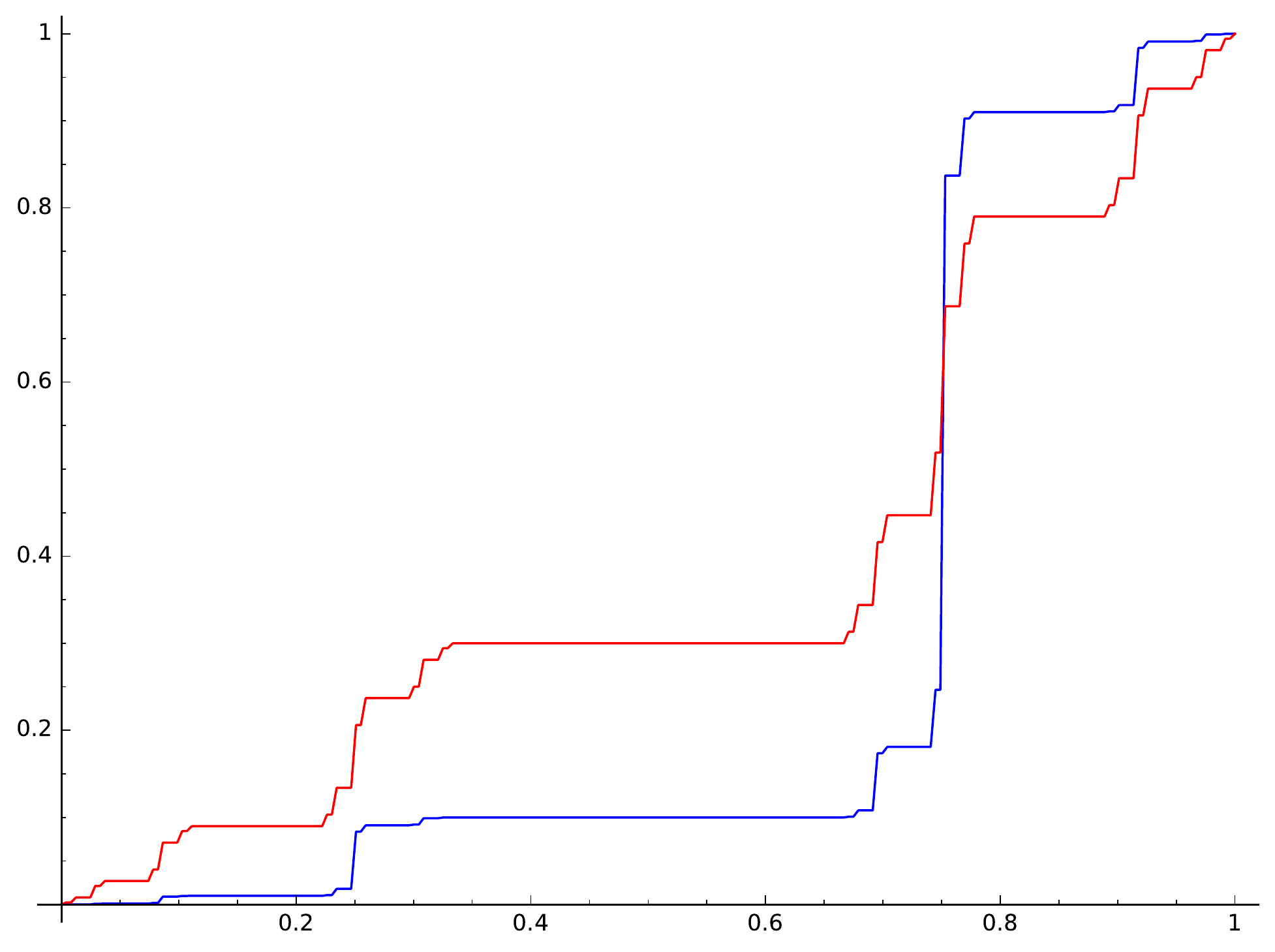}
\caption{E.g. 11. Cantor staircase for $\mu^{(f^{3},(0.1,0.9))}[0,x]$ in blue and for $\mu^{(f^{3},(0.3,0.7))}[0,x]$ in red.}
\end{figure}

\begin{figure}[!]
\centering
\includegraphics[scale=0.33]{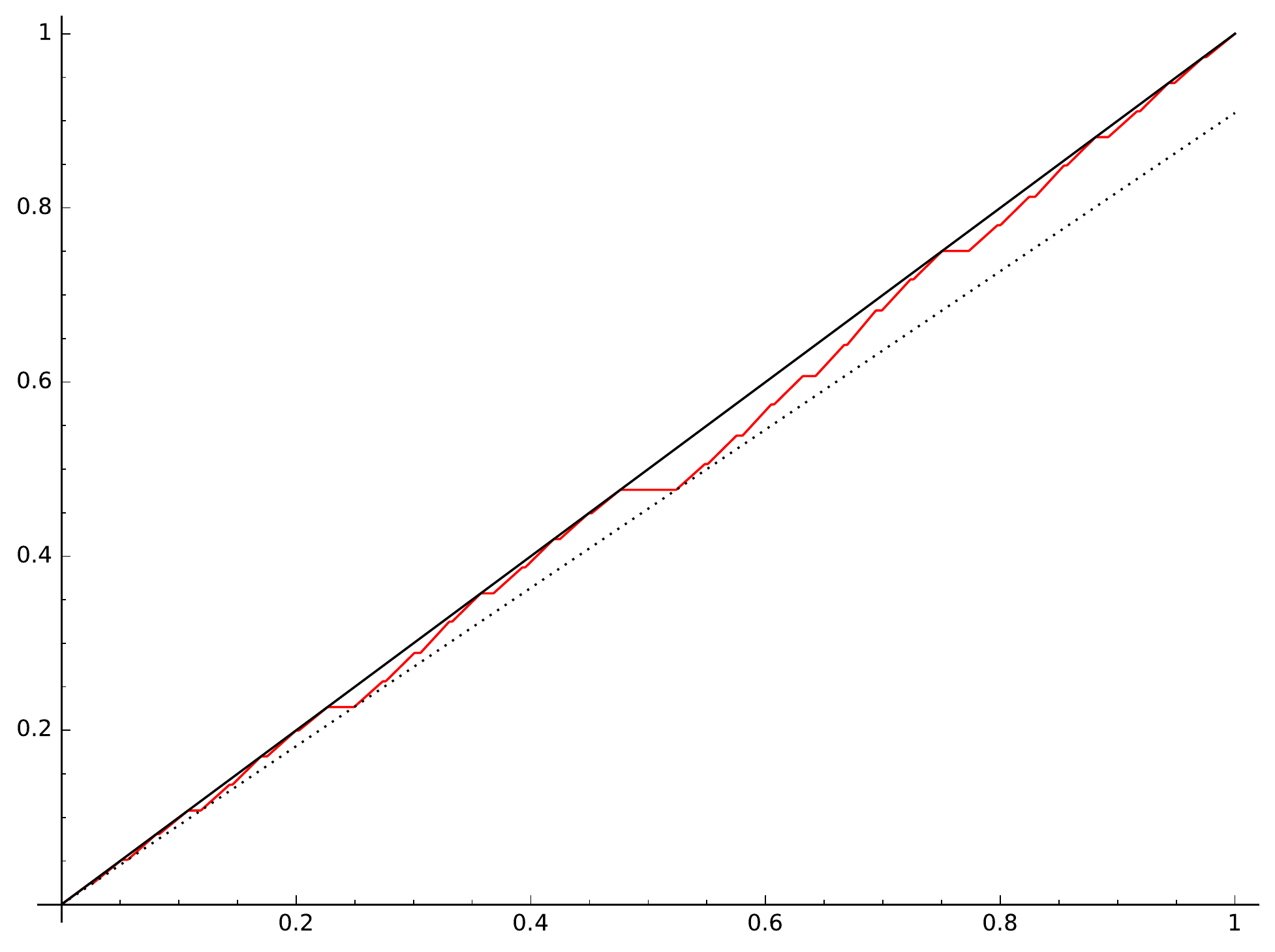}
\caption{E.g. 12. Cantor staircase for $\mu^{(f^{2.1},(1/2.1,1.1/2.1))}[0,x]$ in red and black lower and upper bounds.}
\end{figure}

\begin{figure}[!]
\centering
\includegraphics[scale=0.33]{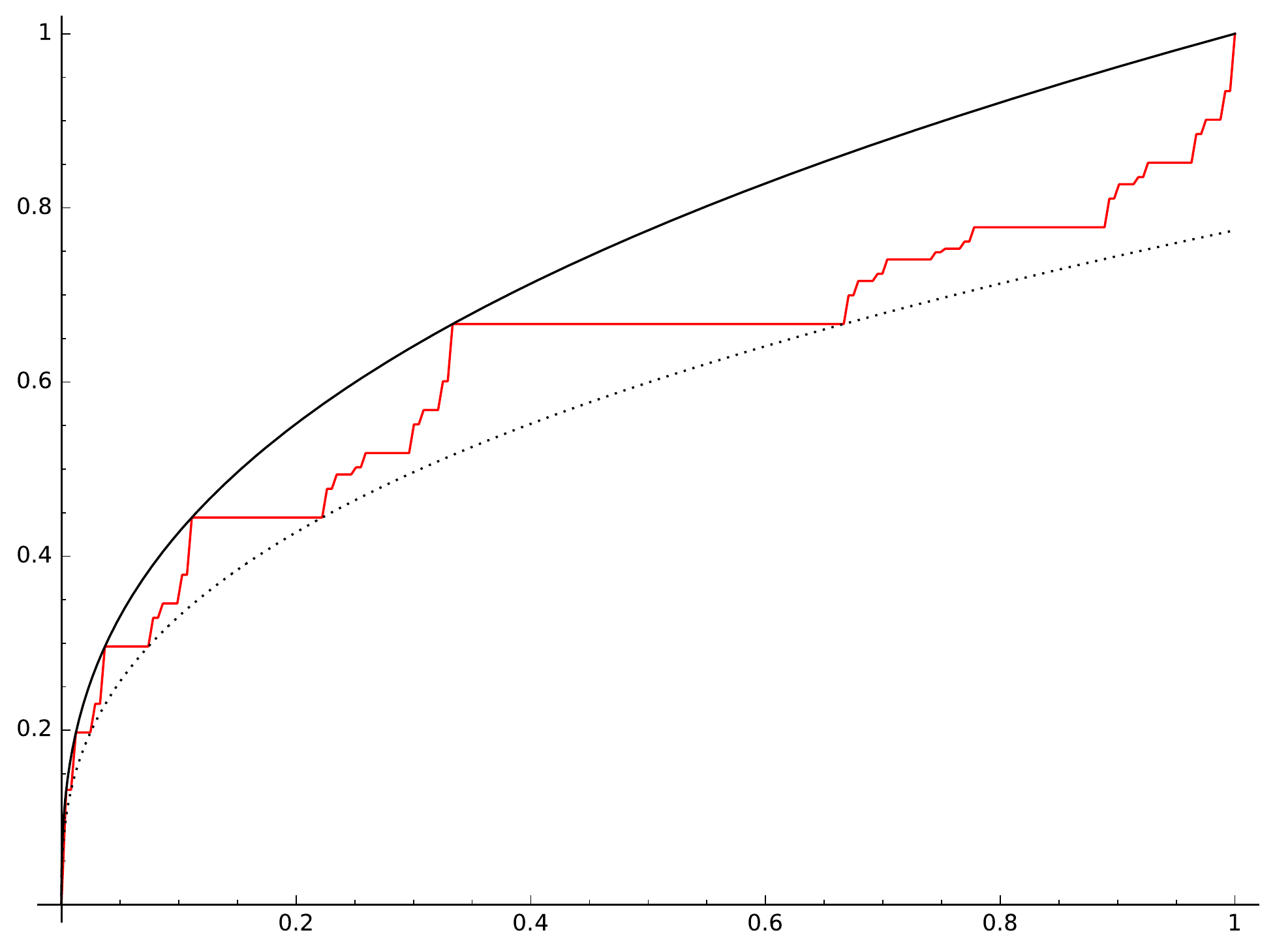}
\caption{E.g. 13.  Cantor staircase for $\mu^{(f^{3},(2/3,1/3))}[0,x]$ in red and black lower and upper bounds}
\end{figure}

\begin{figure}[!]
\centering
\includegraphics[scale=0.33]{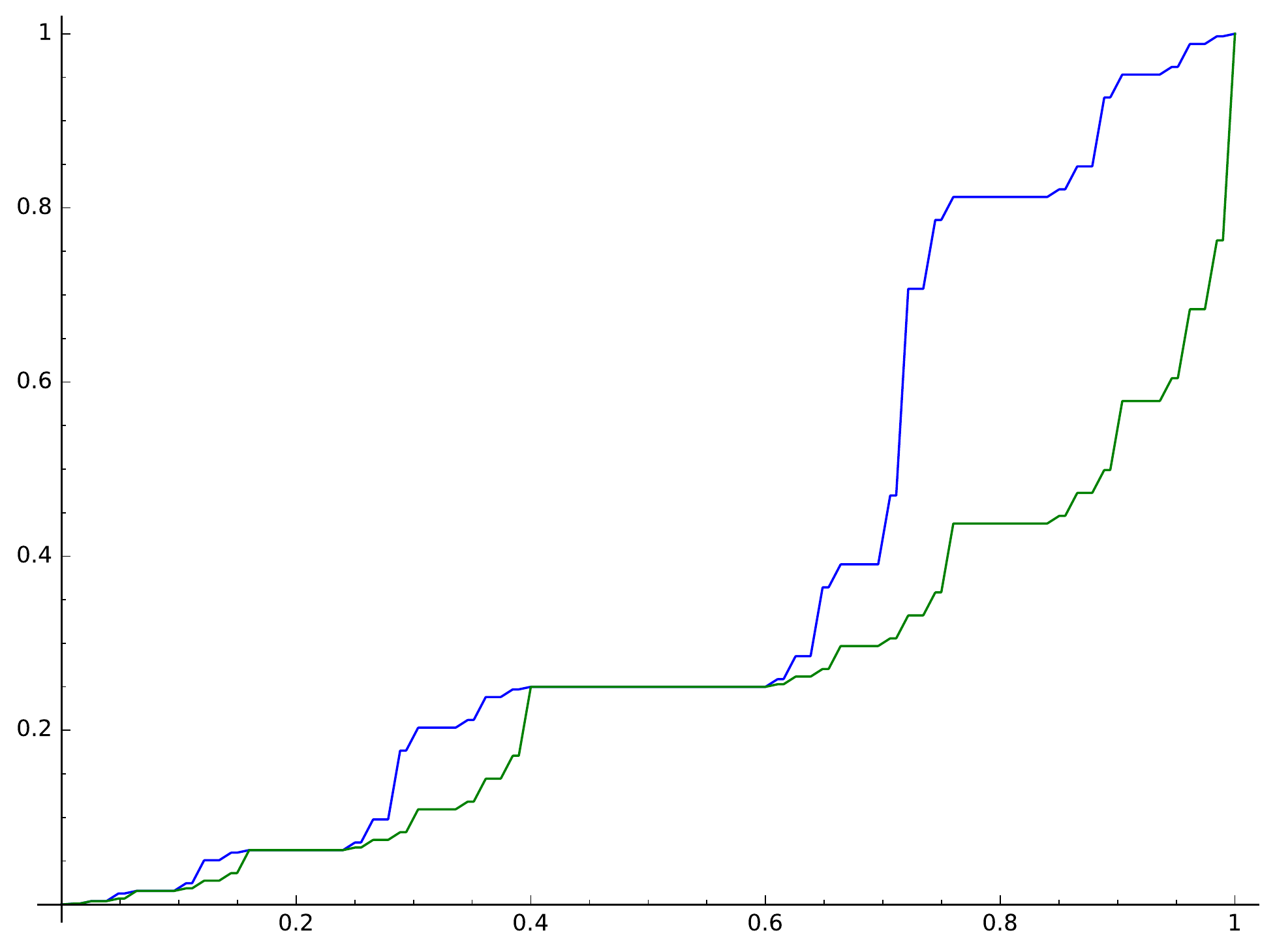}
\caption{E.g. 14. Cantor staircase for $\mu^{(f^{2.5},(1/4,3/4))}[0,x]$ in blue and for $\mu^{(g^{2.5},(1/4,3/4))}[0,x]$ in green.}
\end{figure}

\begin{figure}[!]
\centering
\includegraphics[scale=0.33]{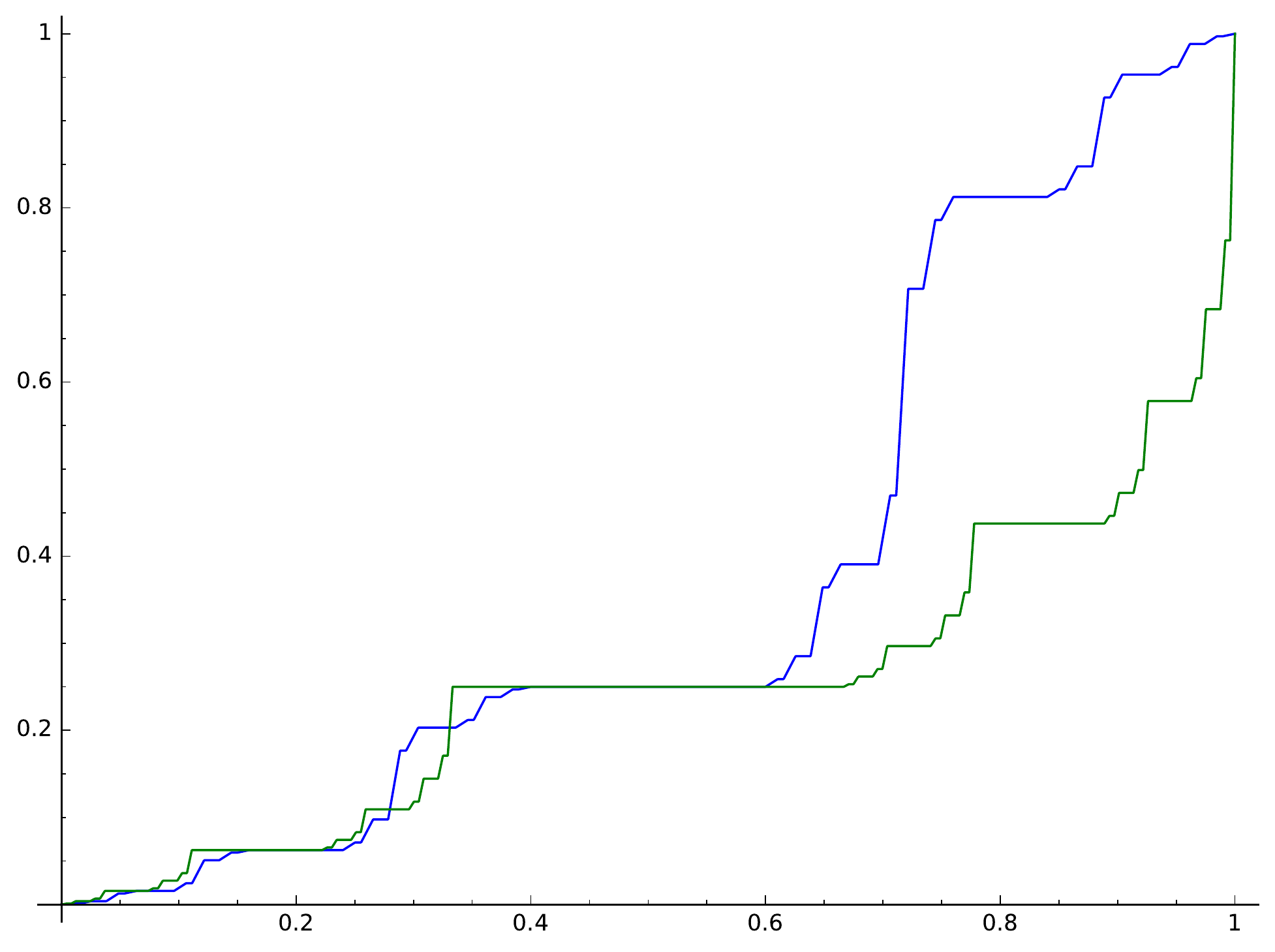}
\caption{E.g. 15. Cantor staircase for $\mu^{(f^{2.5},(1/4,3/4))}[0,x]$ in blue and for $\mu^{(g^{3},(1/4,3/4))}[0,x]$ in green.}
\end{figure}

\begin{figure}[!]
\centering
\includegraphics[scale=0.33]{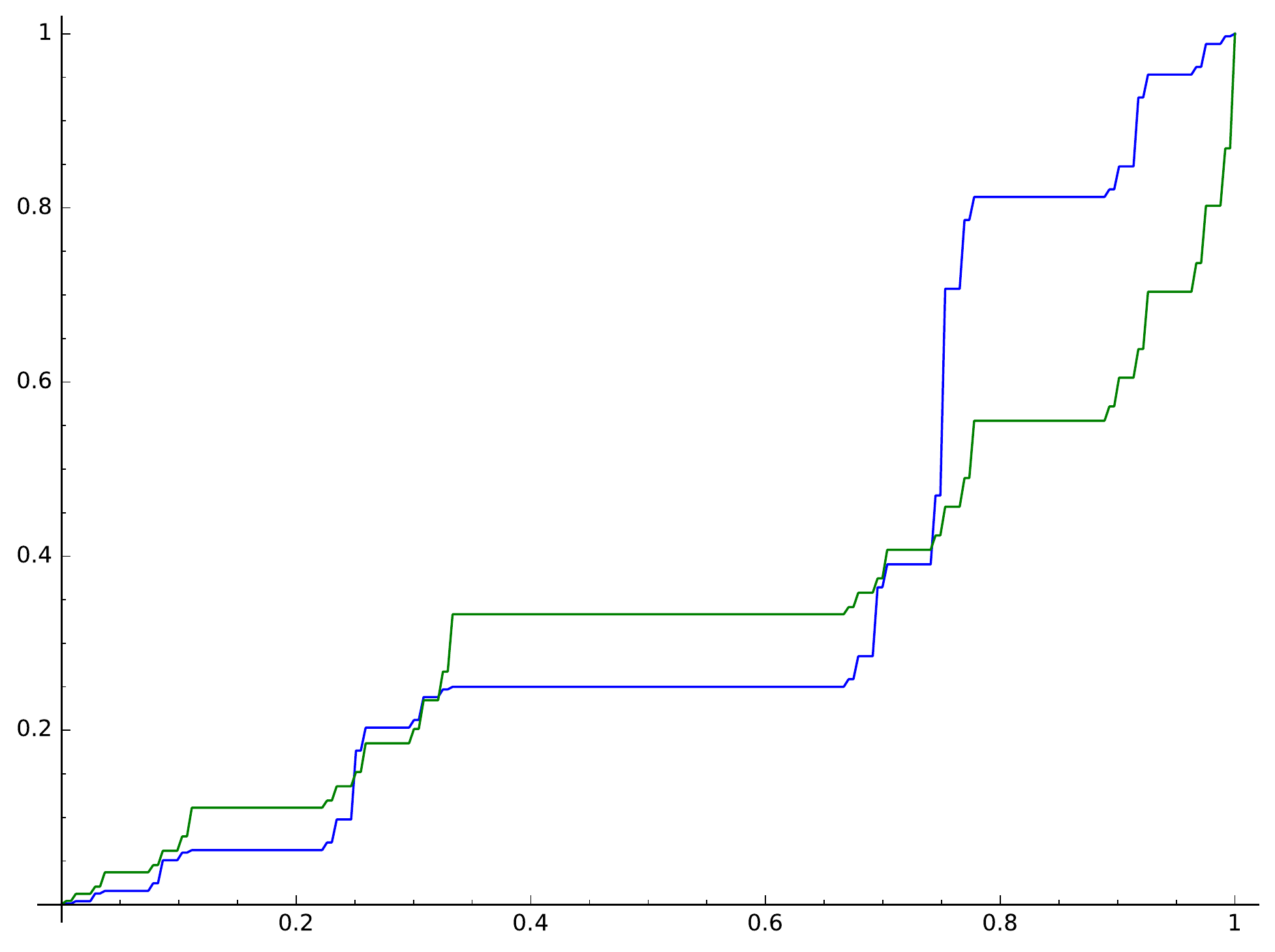}
\caption{E.g. 16. Cantor staircase for $\mu^{(f^{3},(1/4,3/4))}[0,x]$ in blue and for $\mu^{(g^{3},(1/3,2/3))}[0,x]$ in green.}
\end{figure}

\clearpage


\begin{thebibliography}{1}

\bibitem{Aglio}
Dall'Aglio, G. ``Sugli estremi dei momenti delle funzioni di ripartizione doppia''. \emph{Ann. Scuola Norm. Sup. Pisa Cl. Sci.} , 3 : 1 (1956), 33-74.

\bibitem{Fraser}
Fraser, J. First and second moments for self-couplings and Wasserstein distances,  \emph{Mathematishe Nachrichten}, 288 (2015), 2028-2041.

\bibitem{Hutchinson}
Hutchinson, J.  Fractals and self similarity, \emph{Indiana Univ. Math. J.}, 30 (1981), 713-747.

\bibitem{Mark_Pollicott_Italo_Cipriano}
Cipriano, I.,  Pollicott, M. Stationary measures associated to analytic iterated function schemes, \emph{Math. Nachr.}, 291:7 (2018), 1049-1054.


\bibitem{Vallander}
Vallander, S.S. Calculation of the Wasserstein distance between probability distributions on the line, \emph{Teor. Veroyatnost. i Primenen.}, 18: 4  (1973), 824-827.

\bibitem{Villani}
Villani, C. \emph{Topics in Optimal Transportation}, Graduate Studies in Mathematics, Vol. 58,  \emph{Amer. Math. Soc.}  2003.


\bibitem{Villani1}Villani,  C. Topics in Optimal Transportation. \emph{Graduate Studies in Mathematics}, AMS, 2003.
\bibitem{Villani2} Villani, C. Optimal transport: Old and New, \emph{Springer Verlag}, 2008.


\bibitem{150} Bouchitt\'e, G., Jimenez, Ch., and Rajesh. M. Asymptotique d'un probl\'eme de positionnement optimal. \emph{C. R. Acad. Sci. Paris, S\'er.} I, 335 (2002), 835-858.

\bibitem{254} Cuesta-Albertos, J. A., and Matr\'an, C. Notes on the Wasserstein metric in Hilbert spaces. \emph{Ann. Probab.} 17, 3 (1989), 1264-1276.

\bibitem{256} Cuesta-Albertos, J. A., Matr\'an, C., Rachev, S. T., and R\"{u}schendorf, L. Mass transportation problems in probability theory. \emph{Math. Sci.} 21, 1 (1996), 34-72.
\bibitem{257} Cuesta-Albertos, J. A., Matr\'an, C., and Rodr\'iguez-Rodr\'iguez, J. Approximation to probabilities through uniform laws on convex sets. \emph{J. Theoret. Probab.} 16, 2 (2003), 363-376.

\bibitem{282} del Barrio, E., Cuesta-Albertos, J. A., Matr\'an, C., and Rodr\'iguez- Rodr\'iguez, J. M. Tests of goodness of fit based on the L2-Wasserstein distance. \emph{Ann. Statist.} 27, 4 (1999), 1230-1239.

\bibitem{694} Rachev, S. The Monge-Kantorovich mass transference problem and its stochastic applications. \emph{Theory Probab. Appl.} 29 (1984), 647-676.


\bibitem{696} Rachev, S. T., and R\"{u}schendorf, L. Mass Transportation Problems. Vol. I: Theory, Vol. II: Applications. Probability and its applications. \emph{Springer- Verlag, New York,} 1998.

\bibitem{716} R\"{u}schendorf, L. The Wasserstein distance and approximation theorems. Z. Wahrsch. \emph{Verw. Gebiete} 70 (1985), 117-129.

\bibitem{81} Benachour, S., Roynette, B., Talay, D., and Vallois, P. Nonlinear self- stabilizing processes. I. Existence, invariant probability, propagation of chaos. \emph{Stochastic Process. Appl.} 75, 2 (1998), 173-201.

\bibitem{82} Benachour, S., Roynette, B., and Vallois, P. Nonlinear self-stabilizing processes. II. Convergence to invariant probability. \emph{Stochastic Process. Appl.} 75, 2 (1998), 203-224.

\bibitem{221} Cattiaux, P., Guillin, A., and Malrieu, F. Probabilistic approach for granular media equations in the nonuniformly convex case. \emph{Probab. Theory Related Fields}, 140, 1-2 (2008), 19-40.

\bibitem{590} Malrieu, F. Logarithmic Sobolev inequalities for some nonlinear PDE's. \emph{Stochastic Process. Appl.} 95, 1 (2001), 109-132.

\bibitem{624} M\'el\'eard, S. Probabilistic interpretation and approximations of some Boltzmann equations. In Stochastic models (Guanajuato, 1998). \emph{Soc. Mat. Mexicana, M\'exico}, 1998, pp. 1-64.

\bibitem{776} Tanaka, H. An inequality for a functional of probability distributions and its application to Kac's one-dimensional model of a Maxwellian gas. Z. \emph{Wahrscheinlichkeitstheorie und Verw. Gebiete}, 27 (1973), 47-52.

\bibitem{777} Tanaka, H. Probabilistic treatment of the Boltzmann equation of Maxwellian molecules. Z. Wahrsch. Verw. Gebiete 46, 1 (1978/79), 67-105.

\bibitem{231} Chen, M.-F. Trilogy of couplings and general formulas for lower bound of spectral gap. Probability towards 2000 (New York, 1995), 123-136, Lecture Notes in Statist. 128, Springer, New York, 1998.

\bibitem{662} Ollivier, Y. Ricci curvature of Markov chains on metric spaces. Preprint, 2007. Available online at www.umpa.ens-lyon.fr/~yollivie/publs.html.

\bibitem{679} Peres, Y. Mixing for Markov chains and spin systems. Unpublished notes, available online at www.stat.berkeley.edu/~peres/ubc.pdf.

\bibitem{695} Rachev, S. T. Probability metrics and the stability of stochastic models. John Wiley and Sons Ltd., Chichester, 1991.
\bibitem{696} Rachev, S. T., and R\"{u}schendorf, L. Mass Transportation Problems. Vol. I: Theory, Vol. II: Applications. Probability and its applications. Springer- Verlag, New York, 1998.

\bibitem{498} Kalashnikov, V. V., and Rachev, S. T. Mathematical methods for construction of queueing models. The Wadsworth and Brooks/Cole Operations Research Series, Pacific Grove, CA, 1990.

\bibitem{8} Ajtai, M., Koml\'os, J., and Tusn\'ady, G. On optimal matchings. Combinatorica 4, 4 (1984), 259-264.

\bibitem{307} Dobri\'c, V., and Yukich, J. E. Asymptotics for transportation cost in high dimensions. J. Theoret. Probab. 8, 1 (1995), 97-118.

\bibitem{314} Dudley, R. M. The speed of mean Glivenko-Cantelli convergence. Ann. Math. Statist. 40 (1968), 40-50.

\bibitem{315} Dudley, R. M. Speeds of metric probability convergence. Z. Wahrscheinlichkeitstheorie verw. Geb. 22 (1972), 323-332.

\bibitem{479} Horowitz, J., and Karandikar, R. L. Mean rates of convergence of empirical measures in the Wasserstein metric. \emph{J. Comput. Appl. Math.} 55, 3 (1994), 261-273. 

\bibitem{771} Talagrand, M. Matching random samples in many dimensions. \emph{Ann. Appl. Probab.} 2, 4 (1992), 846-856.

\bibitem{845} Yukich, J. E. Optimal matching and empirical measures. \emph{Proc. Amer. Math.Soc.} 107, 4 (1989), 1051-1059.

\bibitem{455} Hairer, M. Exponential mixing properties of stochastic PDEs through asymptotic coupling. \emph{Probab. Theory Related Fields}, 124, 3 (2002), 345-380.

\bibitem{456} Hairer, M., and Mattingly, J. C. Ergodic properties of highly degenerate 2D stochastic Navier-Stokes equations. \emph{C. R. Math. Acad. Sci. Paris}, 339, 12 (2004), 879-882.

\bibitem{457} Hairer, M., and Mattingly, J. C. Ergodicity of the 2D Navier-Stokes equations with degenerate stochastic forcing. \emph{Ann. of Maths}, (2) 164, 3 (2006), 993-1032.

\bibitem{458} Hairer, M., and Mattingly, J. C. Spectral gaps in Wasserstein distances and the 2D Navier-Stokes equation. \emph{Ann. Probab.}

\bibitem{533} Kuksin, S., Piatnitski, A., and Shirikyan, A. A coupling approach to randomly forced nonlinear PDE's, II. \emph{Comm. Math. Phys.} 230, 1 (2002), 81-85.

\bibitem{Kukushkin} Gorin, E. A. and  Kukushkin, B. N. Integrals associated with the Cantor staircase. (Russian) Algebra i Analiz 15 (2003), no. 3, 188--220; translation in St. Petersburg Math. J. 15 (2004), no. 3, 449-468.

\bibitem{605} Mattingly, J. C. Ergodicity of stochastic partial differential equations. \emph{Lecture Notes for the 2007 Saint-Flour Summer School}.

\bibitem{444} Grunewald, N., Otto, F., Villani, C., and Westdickenberg, M. A two- scale approach to logarithmic Sobolev inequalities and the hydrodynamic limit. \emph{Ann. Inst. H. Poincar\'e Probab. Statist.}

\bibitem{625} Melleray, J., Petrov, F., and Vershik, A. M. Linearly rigid metric spaces. \emph{C. R. Math. Acad. Sci. Paris}, 344, 4 (2007), 235-240.

\bibitem{809} Vershik, A. M. The Kantorovich metric: the initial history and little-known applications. \emph{Zap. Nauchn. Sem. S.Peterburg. Otdel. Mat. Inst. Steklov.} (POMI) 312, Teor. Predst. Din. Sist. Komb. i Algoritm. Metody. 11 (2004),
69-85, 311.

\bibitem{700} Reinhard, E., Ashikhmin, M., Gooch, B., and Shirley, P. Color transfer between images. \emph{IEEE Computer Graphics and Applications}, 21, 5 (2001), 34-41.

\bibitem{704} Rio, E. Upper bounds for minimal distances in the central limit theorem. \emph{Annales de l'Institut Henri Poincar\'e - Probabilit\'es et Statistiques} 2009, Vol. 45, No. 3, 802-817.

\bibitem{713} Rubner, Y., Tomasi, C., and Guibas, L. J. The Earth Mover's distance as a metric for image retrieval. \emph{International Journal of Computer Vision}, 40, 2 (2000), 99-121.

\bibitem{1000} M. Briani, E. Cristiani, E. Iacomini, Sensitivity analysis of the LWR model for traffic forecast on large networks using Wasserstein distance, arXiv:1608.00126

\bibitem{1001} E. Cristiani, B. Piccoli and A. Tosin, Multiscale Modeling of Pedestrian Dynamics, Modeling, Simulations and Applications 12 (Springer, 2014).

\bibitem{1002} M. Di Francesco and M. D. Rosini, Rigorous derivation of nonlinear scalar conservation laws from follow-the-leader type models via many particle limit. \emph{Arch. Rational Mech. Anal.}, 217 (2015), 831-871.

\bibitem{1003} F. Santambrogio, Optimal Transport for Applied Mathematicians. Calculus of Variations, PDEs, and Modeling. \emph{Birkh\"{a}user}, (2015).

\end{thebibliography}
\end{document}